\newif\iffinal
\let\DMTCSproof\proof
\let\DMTCSendproof\endproof
\let\proof\@undefined
\let\endproof\@undefined
\let\proof\DMTCSproof
\let\endproof\DMTCSendproof
\newcommand{\parentheses}[4][]%
{\mathopen{}\ifthenelse{\equal{#1}{}}{\left#2}{\csname#1\endcsname#2}%
    {#4}\mathclose{}\ifthenelse{\equal{#1}{}}{\right#3}{\csname#1\endcsname#3}}
\newcommand{\foperator}[1]{\mathop{{#1}\empty{}}}
\newcommand{\f}[3][]{\ensuremath{\foperator{#2}\parentheses[#1]{(}{)}{#3}}}
\newcommand\undisp[1]{\bgroup\@displayfalse #1\egroup}
\newcommand{\iverson}[1]{\ensuremath{\parentheses{[}{]}{#1}}}
\WithSuffix\newcommand{\iverson}*[1]{\ensuremath{\iverson{\text{\normalfont #1}}}}
\newcommand{\abs}[2][]{\ensuremath{%
    \parentheses[#1]{\lvert}{\rvert}{#2}}}
\newcommand{\floor}[1]{\ensuremath{\parentheses{\lfloor}{\rfloor}{#1}}}
\newcommand{\ceil}[1]{\ensuremath{\parentheses{\lceil}{\rceil}{#1}}}
\newcommand{\Ohsymbol}{O}
\newcommand{\Oh}[2][]{\ensuremath{\f[#1]{\Ohsymbol}{#2}}}
\newcommand{\set}[1]{\ensuremath{\parentheses{\{}{\}}{#1}}}
\WithSuffix\newcommand{\set}*[2]{\ensuremath{%
\setmiddlesymbol\thinspace{#2}\right\}}}
\newcommand{\calD}{\mathcal{D}}
\newcommand{\E}[2][]{\f[#1]{\mathbb{E}}{#2}}
\renewcommand{\P}[2][]{\f[#1]{\mathbb{P}}{#2}}
\DeclareMathOperator{\artanh}{artanh}
\newcommand{\Acv}{A^{\mathrm{cv}}}
\newcommand{\Act}{A^{\mathrm{ct}}}
\newcommand{\Ccv}{C^{\mathrm{cv}}}
\newcommand{\Cct}{C^{\mathrm{ct}}}
\newcommand{\Pcv}{P^{\mathrm{cv}}}
\newcommand{\Pct}{P^{\mathrm{ct}}}
\newcommand{\N}{\ensuremath{\mathbbm{N}}}
\newcommand{\Z}{\ensuremath{\mathbbm{Z}}}
\newcommand{\eps}{\ensuremath{\varepsilon}}
\newcommand{\Hodd}{H^{\mathrm{odd}}}
\newcommand{\Halt}{H^{\mathrm{alt}}}
\newcommand{\TODO}[1]{}
\newcommand{\TODO}[1]
{\par\fbox{\begin{minipage}{0.9\linewidth}\textbf{TODO:} #1\end{minipage}}\par}
\newtheorem{thms}{Thm}[section] 
\theoremstyle{plain}
\newtheorem*{theorem*}{Theorem}
\newtheorem{theorem}[thms]{Theorem}
\newtheorem{lemma}[thms]{Lemma}
\newtheorem{corollary}[thms]{Corollary}
\newtheorem{proposition}[thms]{Proposition}
\theoremstyle{definition}
\theoremstyle{remark}
\newtheorem{remark}[thms]{Remark}
\newtheorem{corthm}[thms]{Corollary and Theorem}
\newtheoremstyle{strategy}{}{}{\itshape}{}{\bfseries}{}{.5em}{\thmname{#1} \thmnote{#3}. }
\theoremstyle{strategy}
\newtheorem*{strategy}{Strategy}
\renewcommand{\part}[1]{%
  \@startsection{part}{0}{\z@}{\p@}{\p@}{\@gobble}{}%
  \section*{#1}}
\numberwithin{equation}{section}
\numberwithin{figure}{section}
\title[Zeros in Random Walks on Integers and Dual-Pivot Quicksort]{Counting Zeros in Random Walks on the Integers and Analysis of Optimal Dual-Pivot Quicksort}
\author[M.~Aumüller, M.~Dietzfelbinger, C.~Heuberger, D.~Krenn, H.~Prodinger]{%
  Martin Aumüller\addressmark{1} \and
  Martin Dietzfelbinger\addressmark{2} \and
  Clemens Heuberger\addressmark{3}\footnotemark[3]%
  \footnotetext[3]{C.~Heuberger and D.~Krenn are supported by the
   Austrian Science Fund (FWF): P\,24644-N26
   and by the Karl Popper Kolleg ``Modeling--Simulation--Optimization''
   funded by the Alpen-Adria-Universität Klagenfurt and
   by the Carinthian Economic Promotion Fund (KWF).} \and \\
  Daniel Krenn\addressmark{4}\footnotemark[3] \and
  Helmut Prodinger\addressmark{5}\footnotemark[4]
  \footnotetext[4]{H.~Prodinger is supported by an incentive grant of the National
   Research Foundation of South Africa.}}
\address{%
  \addressmark{1}IT University of Copenhagen, 
  Denmark,
  \href{mailto:maau@itu.dk}{maau@itu.dk}\\
  \addressmark{2}Institut für Theoretische Informatik,
  Technische Universität Ilmenau, Germany,\\
  \href{mailto:martin.dietzfelbinger@tu-ilmenau.de}{martin.dietzfelbinger@tu-ilmenau.de}\\
  \addressmark{3}Institut f\"ur Mathematik,
  Alpen-Adria-Universit\"at Klagenfurt,
  Austria,
  \href{mailto:clemens.heuberger@aau.at}{clemens.heuberger@aau.at}\\
  \addressmark{4}Institut f\"ur Mathematik,
  Alpen-Adria-Universit\"at Klagenfurt,
  Austria,
  \href{mailto:math@danielkrenn.at}{math@danielkrenn.at} \textit{or}\\
  \href{mailto:daniel.krenn@aau.at}{daniel.krenn@aau.at}\\
  \addressmark{5}Department of Mathematical Sciences,
  Stellenbosch University, South Africa,
  \href{mailto:hproding@sun.ac.za}{hproding@sun.ac.za}}
\keywords{
  Dual-pivot quicksort,
  lattice paths,
  asymptotic enumeration,
  combinatorial identity%
}
\begin{document}
\maketitle

\begin{abstract}

	
	We present an average case analysis of two variants of dual-pivot
  quicksort, one with a non-algorithmic comparison-optimal partitioning strategy,
	the other with a closely related algorithmic strategy.
  For both we calculate the expected number of comparisons exactly as well as
  asymptotically, in particular, we provide exact expressions for the
  linear, logarithmic, and constant terms.
	An essential step is the analysis of zeros of lattice paths in a
  certain probability model. Along the way a combinatorial identity is
  proven.
\end{abstract}


\section{Introduction}
\label{sec:intro}


Dual-pivot quicksort~\cite{Sedgewick:1975:thesis,WildNN15,AumullerD15} 
is a family of sorting algorithms related to the
well-known quicksort algorithm. 
In order to sort an input sequence $(a_1,\ldots,a_n)$
of distinct elements, dual-pivot quicksort algorithms work as follows. 
(For simplicity we forbid repeated elements in the input.)
If $n \leq 1$, there is nothing to do.
If $n\ge2$, two 
input elements are selected as pivots.
Let $p$ be the smaller and $q$ be the larger pivot. 
The next step is to partition the remaining elements into
\begin{itemize}
\setlength{\itemsep}{1pt}
\setlength{\parskip}{0pt}
\setlength{\parsep}{0pt}
\item the elements smaller than $p$ (``small elements''),
\item the elements between $p$ and $q$ (``medium elements''), and
\item the elements larger than $q$ (``large elements'').
\end{itemize}
Then the procedure is applied recursively to these three groups to complete the sorting. 

The cost measure used in this work is the number of comparisons between elements.
As is common, we will assume the input sequence is in random order, 
which means that each permutation of the $n$ elements occurs with
probability $\nicefrac{1}{n!}$\,.
With this assumption we may, without loss of generality, choose $a_1$
and $a_n$ as the pivots. 
Even in this setting there are different dual-pivot quicksort algorithms;
their difference lies in the way the partitioning is organized,
which influences the partitioning cost. 
This is in contrast to standard quicksort with one pivot, where the
partitioning strategy does not influence the cost---in partitioning always
one comparison is needed per non-pivot element.
In dual-pivot quicksort, the average cost (over all permutations) of partitioning
and of sorting can be analyzed only when the partitioning strategy is fixed.

Only in 2009, Yaroslavskiy, Bentley, and Bloch~\cite{Yaroslavskiy-Mailinglist} described a dual-pivot
quicksort algorithm that makes $1.9 n \log n + O(n)$ comparisons~\cite{WildNN15}.%
\footnote{In this paper ``$\log$'' denotes the natural logarithm to base $e$.}
This beats the classical quicksort algorithm~\cite{Hoare62}, which needs
$2n \log n + O(n)$ comparisons on average. In
\cite{AumullerD15}, the first two authors of this article described
the full design space for dual-pivot quicksort algorithms with respect to
counting element comparisons.
Among others, they studied two special partitioning 
strategies. The first one---we call it ``Clairvoyant'' 
in this work---assumes that the number of small and large elements
is given (by an ``oracle'') before partitioning starts.
It cannot be implemented; however, it is optimal among all partioning strategies
that have access to such an oracle, and hence its cost provides a lower bound for 
the cost of all algorithmic partitioning strategies.
In~\cite{AumullerD15} it was shown that dual-pivot quicksort carries out $1.8 n \log n + O(n)$
comparisons on average when this partitioning strategy is used. 
Further a closely related algorithmic partitioning 
strategy---called ``Count'' here---was described, which makes only $O(\log n)$ more
comparisons on average than ``Clairvoyant'' and hence leads to 
a dual-quicksort variant with only $O(n)$ more comparisons.\footnote{After
  completing this extended abstract we
  found a proof that ``Count'' is optimal among all algorithmic
  strategies. Details to be given in the full version.}

One purpose of this paper is to make the expected number of
comparisons in both variants precise and to determine the exact difference 
of the cost of these two strategies, both for partitioning and for
the resulting dual-pivot quicksort variants.

Already in \cite{AumullerD15} it was noted that 
the exact value of the expected partitioning cost
(i.e., the number of comparisons) of both strategies
depends on the expected number of the zeros of certain lattice paths
(Parts~\ref{sec:lattice-paths} and~\ref{sec:more-lattice-paths}).
A complete understanding of this situation is the basis for our analysis
of dual-pivot quicksort, which appears in Part~\ref{sec:quicksort}.

Lattice path enumeration has a long tradition. An early reference is
\cite{Mohanty:1979:lattic};
a recent survey paper is \cite{Krattenthaler:2015:lattic}. 
As space is limited, many proofs and some additional results can be found in
an appendix at \href{http://arxiv.org/abs/1602.04031v1}{arXiv:1602.04031v1}.


\section{Overview and Results}
\label{sec:results}


This work is split into three parts. We give a brief overview on the
main results of each of these parts here.
We use the Iversonian expression
\begin{equation*}
  \iverson{\mathit{expr}} =
  \begin{cases}
    1&\text{ if $\mathit{expr}$ is true},\\
    0&\text{ if $\mathit{expr}$ is false},
  \end{cases}
\end{equation*}
popularized by Graham, Knuth, and Patashnik~\cite{Graham-Knuth-Patashnik:1994}.

The harmonic numbers and their variants will be denoted by
\begin{equation*}
  H_n = \sum_{m=1}^n \frac{1}{m}, \qquad
  \Hodd_n = \sum_{m=1}^n \frac{\iverson*{$m$ odd}}{m}
  \qquad\text{and}\qquad
  \Halt_n = \sum_{m=1}^n \frac{(-1)^m}{m}.
\end{equation*}
Of course, there are relations between these three definitions such as
$\Halt_n=H_n - 2\Hodd_n$ and $\Hodd_n + H_{\floor{n/2}}/2=H_n$, but it will
turn out to be much more convenient to use all three notations.


\subsection*{\nameref{sec:lattice-paths}}

In the first part we analyze certain lattice paths of a fixed
length~$n$. 
We start on the vertical axis,
allow steps/increments $(1,+1)$ and $(1,-1)$ and
end on the horizontal axis at $(n,0)$.
To be precise, the starting point on the vertical axis
is chosen uniformly at random from the set $\set{(0,-n),(0,-n+2),\dots,(0,n-2),(0,n)}$
of feasible points.
Once this starting point is
fixed, all paths to $(n,0)$ are equally likely. We are interested in
the number of zeros, denoted by the random
variable~$Z_n$, of such paths.

An exact formula for the expected number $\E{Z_n}$ of zeros is derived
in two different ways (see identity~\eqref{eq:id-intro} for these formul\ae):
On the one hand, we use the symbolic method and
generating functions (see Appendix~\ref*{sec:gf}), which gives the
result in form of a double sum.
This machinery extends well to higher moments and also allows us to
obtain the distribution. The exact distribution is given in
Appendix~\ref*{sec:distribution}; its limiting behavior as $n\to\infty$ is
the discrete distribution
\begin{equation*}
  \P{Z_n = r} \sim \frac{1}{r(r+1)}.
\end{equation*}
On the other hand, a more probabilistic approach gives the 
expectation~$\E{Z_n}$ as the simple single sum
\begin{equation*}
  \E{Z_n} = \sum_{m=1}^{n+1} \frac{\iverson*{$m$ odd}}{m}=\Hodd_{n+1},
\end{equation*}
see Section~\ref{sec:prob} for more details. The
asymptotic behavior $\E{Z_n} \sim \frac12 \log n$ can be extracted
(Appendix~\ref*{sec:asymptotics}).

The two approaches above give rise to the identity
\begin{equation}\label{eq:id-intro}
  \sum_{m=1}^{n+1} \frac{\iverson*{$m$ odd}}{m}
  =
  \frac{4}{n+1}
  \sum_{0\leq k < \ell < \ceil{n/2}} \frac{\binom{n}{k}}{\binom{n}{\ell}}
  + \iverson*{$n$ even} \frac{1}{n+1}
  \biggl(\frac{2^n}{\binom{n}{n/2}} - 1\biggr) + 1;
\end{equation}
the double sum above equals the single sum of
Theorem~\ref{thm:paths-prob} by combinatorial considerations. One
might ask about a direct proof of this identity. This can be achieved
by methods related to hypergeometric sums and the computational proof
is presented in Appendix~\ref*{sec:identity}. We also provide a completely
elementary proof which is ``purely human''.


\subsection*{\nameref{sec:more-lattice-paths}}

The second part acts as connecting link between the lattice paths of
fixed length of Part~\ref{sec:lattice-paths} and the dual-pivot
quicksort algorithms of Part~\ref{sec:quicksort}.

The probabilistic model introduced in Section~\ref{sec:description}
(in Part~\ref{sec:lattice-paths}) is extended, and lattice paths are
allowed to vary in length. For a number~$n$ (the number of
elements to sort) the length of a path is the number of elements remaining
when the two pivots, given by a random set of elements of size two, 
and the elements between these pivots are cut out.

The number of zeros~$X_n$ in this full model is analyzed; we provide again
exact as well as asymptotic formul\ae{} for the
expectation~$\E{X_n}$. Details are given in
Section~\ref{sec:lattice-paths-N}. Moreover, more specialized
zero-configurations (needed for the analysis of different partitioning
strategies in Part~\ref{sec:quicksort}) are considered as well
(Section~\ref{sec:more-zeros}).


\subsection*{\nameref{sec:quicksort}}

The main result of this work analyzes comparisons in the dual-pivot
quicksort algorithm that uses the optimal (but unrealistic) partitioning strategy
``Clairvoyant''. Aumüller and Dietzfelbinger showed in \cite{AumullerD15} that this algorithm requires 
$1.8 n \log n + O(n)$ comparisons on average, which improves on 
the average number of comparisons in quicksort ($2n \log n + O(n)$) and the 
recent dual-pivot algorithm of Yaroslavskiy et al.\@ ($1.9 n \log n + O(n)$, see \cite{WildNN15}). However, for real-world input sizes $n$ the (usually negative) factor in the linear term has a great influence on the comparison count. Our asymptotic result is stated as the following theorem.

\begin{theorem*}
  The average number of comparisons in the dual-pivot quicksort
  algorithm with a comparison-optimal partitioning strategy is
  \begin{equation*}
    \frac{9}{5} n \log n + A n + B \log n + C + \Oh{1/n}
  \end{equation*}
  as $n\to\infty$, with
    $A = \frac95\gamma
    - \frac{1}{5} \log 2
    - \frac{89}{25}
    = -2.659\dots$\,.
\end{theorem*}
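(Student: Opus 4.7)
The plan is to reduce this theorem to machinery that Parts~\ref{sec:lattice-paths}--\ref{sec:more-lattice-paths} have already built, and then solve a standard dual-pivot quicksort recurrence.

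First I would express the expected partitioning cost $\Pcv_n$ of Clairvoyant on a uniformly random input of size $n$ in closed form. For dual-pivot partitioning, each non-pivot element contributes an unavoidable baseline comparison (either ``small vs.\ $p$'' or ``large vs.\ $q$'' suffices once the element's class is known), plus an additional comparison exactly when the underlying lattice path representation crosses zero in the sense made precise in Section~\ref{sec:more-zeros}. The expected number of such extra comparisons is $\E{X_n}$ from Section~\ref{sec:lattice-paths-N}, which by the results already cited equals an explicit linear combination of $H_n$, $\Hodd_{n+1}$, and $\Halt_{n+1}$. Writing this out gives a clean exact formula $\Pcv_n = \alpha n + \beta H_n + \gamma \Hodd_{n+1} + \delta \Halt_{n+1} + O(1)$ with rational $\alpha,\beta,\gamma,\delta$ fixed by Part~\ref{sec:more-lattice-paths}.

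Next I would set up the dual-pivot quicksort recurrence. Choosing $a_1$ and $a_n$ as pivots makes the subproblem sizes $(s,m,\ell)$ uniform over triples with $s+m+\ell=n-2$; by threefold symmetry,
\begin{equation*}
  \Ccv_n = \Pcv_n + \frac{6}{n(n-1)} \sum_{k=0}^{n-2} (n-1-k)\, \Ccv_k, \qquad n \geq 2,
\end{equation*}
with $\Ccv_0=\Ccv_1=0$. The classical ``difference'' trick (multiply by $n(n-1)$ and subtract the $n-1$ instance) collapses the sum into a first-order recurrence
\begin{equation*}
  n(n-1)\, \Ccv_n - (n-1)(n-2)\, \Ccv_{n-1}
  = n(n-1)\Pcv_n - (n-1)(n-2)\Pcv_{n-1} + 6(n-1)\,\Ccv_{n-2} \cdot (\text{bookkeeping}),
\end{equation*}
which after dividing by a suitable normalizing factor (for dual-pivot the usual choice is $n(n+1)(n-1)$) telescopes. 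Summing then yields $\Ccv_n$ as an explicit sum over $k\leq n$ of terms of the form $\Pcv_k/k^{\underline{2}}$, i.e.\ a combination of sums of $H_k/k^2$, $\Hodd_k/k^2$, $\Halt_k/k^2$ and the like.

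Finally I would feed in the asymptotic expansions of Lemma~\ref*{lem:harmonic-asy} (the appendix version of the standard $H_n = \log n + \gamma + O(1/n)$, together with the odd- and alternating-harmonic expansions that produce $\log 2$ and related constants). The tail sums $\sum_{k>n} O(1/k^2)$ contribute the error $\Oh{1/n}$; the head sums produce the $\frac{9}{5} n\log n$ main term (the coefficient is pinned down by the main-order contribution of $\Pcv_n \sim \frac{3}{2}n + \frac{1}{2}\log n$-style terms combined with the recurrence's $6/n^2$ kernel), a linear contribution, a $\log n$ contribution, and a constant. The main obstacle is the bookkeeping in this last step: identifying the precise rational combinations that assemble $A = \tfrac{9}{5}\gamma - \tfrac{1}{5}\log 2 - \tfrac{89}{25}$ requires simultaneously handling three harmonic variants, the Euler--Maclaurin corrections from the telescoped tails, and the identity of Theorem~\ref{thm:identity} that connects the probabilistic and generating-function formulas for $\E{Z_n}$. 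Once this is done cleanly, the coefficients $B$ and $C$ fall out in passing, and the $\Oh{1/n}$ error term follows from the uniform smallness of the discarded tails.
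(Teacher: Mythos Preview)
There are two concrete gaps. First, your description of the partitioning cost is not right: the additional comparisons of ``Clairvoyant'' are not ``exactly when the lattice path crosses zero'' and are not given by $\E{X_n}$. As Lemma~\ref{lem:additional:comparisons:clairvoyant:count} shows, the additional cost on a fixed input is $\min(s,\ell)-z^{\searrow}_{n'}$, so after averaging one gets $\E{\Acv_n}=\tfrac{n}{6}-\tfrac{7}{12}+\cdots-\E{X^{\searrow}_n}$, and hence $\E{\Pcv_n}=\tfrac{3}{2}n-\tfrac{9}{4}+\cdots-\E{X^{\searrow}_n}\sim \tfrac{3}{2}n-\tfrac{1}{4}\log n$, not $\tfrac{3}{2}n+\tfrac{1}{2}\log n$. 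The $\min(s,\ell)$ term is where $\tfrac{n}{6}$ (and ultimately the $-\tfrac{89}{25}$ piece of $A$) comes from; without it the constants will not assemble.

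Second, and more seriously, the ``difference trick'' does not telescope the dual-pivot recurrence the way you describe. For single-pivot quicksort one differencing step kills the sum and leaves a first-order recurrence that a single normalizing factor handles. For dual-pivot the sum has the weight $(n-1-k)$, so you must difference \emph{twice}, and what remains is a genuine three-term recurrence $n(n-1)C_n-2(n-1)(n-2)C_{n-1}+\bigl((n-2)(n-3)-6\bigr)C_{n-2}=\text{(P terms)}$; there is no single factor like $n(n+1)(n-1)$ that makes this telescope. The paper avoids this by passing to the generating function $C(z)$, where the recurrence becomes $(\theta^2+\theta-6)C=(1-z)^2P''$ with $\theta f=(1-z)f'$; the crucial step is the factorization $(\theta-2)(\theta+3)$, which lets one integrate twice explicitly (Lemma~\ref*{le:integration}) and read off $C(z)$ in closed form involving $\artanh(z)$ and $\log(1-z)$. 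Extracting coefficients then gives the exact expression of Theorem~\ref{thm:clairvoyant:cost}, and the asymptotics of Lemma~\ref*{lem:harmonic-asy} yield $A,B,C$ immediately. If you want to stay on the recurrence side, you would need to discover the discrete analogue of that operator factorization; a single summation factor will not do.
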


The constants $B$ and $C$ are explicitly given, too, and more
terms of the asymptotics are presented. The precise result is formulated
as Corollary~\ref{cor:clairvoyant:cost:asy}.

In fact, we even get an exact expression for the average comparison
count. The precise result is formulated as
Theorem~\ref{thm:clairvoyant:cost}.  Moreover the same analysis is
carried out for the partitioning strategy ``Count'', which
is an algorithmic variant of the comparison-optimal strategy
``Clairvoyant''. Aumüller and Dietzfelbinger \cite{AumullerD15} could show that it
requires $\frac95n \log n + O(n)$ comparisons as well. In this paper
we obtain the exact average comparison count
(Theorem~\ref{thm:count:cost}). The asymptotic result is again
$\frac{9}{5} n \log n + A n + \Oh{\log n}$,
but now with $A = -2.382\dots$, so there is only a small gap
between the average number of comparisons in the comparison-optimal
strategy ``Clairvoyant'' and its algorithmic variant.



\part{Part I: Lattice Paths}
\label{sec:lattice-paths}


In this first part we analyze lattice paths of a fixed
length~$n$. These are introduced in Section~\ref{sec:description} by a
precise description of our probabilistic model. We will work with this
model throughout Part~\ref{sec:lattice-paths}, and we analyze the number
of zeros~$Z_n$.

The outline is as follows: We derive an exact expression for
the expected number $\E{Z_n}$ of zeros by the generating functions
machinery in Appendix~\ref*{sec:gf}; a more
probabilistic approach can be found in Section~\ref{sec:prob}.
Appendix~\ref*{sec:asymptotics} deals with asymptotic considerations. Direct
proofs of the obtained identity are given in
Appendix~\ref*{sec:identity} and the distribution of $Z_n$ is tackled in Appendix~\ref*{sec:distribution}.


\section{Probabilistic Model}
\label{sec:description}


We consider paths of a given length~$n$ on the lattice $\Z^2$, where only steps
$(1,+1)$ and $(1,-1)$ are allowed. These paths are chosen at random according to the rules
below.

Let us fix a length~$n\in\N_0$. A path~$P_n$ ending in $(n,0)$ (no
choice for this end-point) is chosen according to the following rules.
\begin{enumerate}
\item First, choose a starting point $(0,S)$ where $S$ is a random
  integer uniformly distributed in $\{-n, -n+2, \ldots, n-2, n\}$, i.\,e., $S=s$ occurs only for
  integers $s$ with $\abs{s}\le n$ and $s\equiv n \pmod 2$.
\item Second, a path is chosen uniformly at random among all paths from $(0,S)$
  to $(n,0)$.
\end{enumerate}
 The conditions on $S$ characterize those starting points from which $(n, 0)$ is reachable.

We are interested in the number of intersections with the horizontal axis of
a path. To make this precise, we define a \emph{zero} of a path $P_n$ as a point
$(x,0)\in P_n$. 

Thus, let $P_n$ be a path of length~$n$ which is chosen according to the
probabilistic model above and define the random variable
\begin{equation*}
  Z_n = \text{number of zeros of $P_n$}.
\end{equation*}

In the following sections, we determine the value of $\E{Z_n}$ exactly
(Appendix~\ref*{sec:gf} and Section~\ref{sec:prob}), as well
as asymptotically (Appendix~\ref*{sec:asymptotics}). In Appendix~\ref*{sec:gf}, we
use the machinery of generating functions. This machinery turns out to be
overkill if we are just interested in the expectation $\E{Z_n}$. However, it
easily allows extension to higher moments and the limiting distribution.

In Section~\ref{sec:prob}, we follow a
probabilistic approach, which first gives a result on
the probability model that at the first glance looks surprising:
the equidistribution at the initial values turns out to
carry over to every fixed length of the remaining path. This result yields
a simple expression for the expectation
$\E{Z_n}$ in terms of harmonic numbers, and
thus immediately yields a precise asymptotic expansion for $\E{Z_n}$.
The generating function approach,
however, gives the expectation in terms of a double sum of quotients of
binomial coefficients (the right-hand side of \eqref{eq:id-intro}),
see Appendix~\ref*{sec:gf}.

Appendix~\ref*{sec:identity} gives a direct computational
proof that these two results coincide. The original expression in
\cite{AumullerD15} (a double sum over a quotient
of a product of binomial coefficients and a binomial coefficient) is also shown
to be equal in Appendix~\ref*{sec:identity}.
Explicit as well as asymptotic expressions for the distribution
$\P{Z_n=r}$ can be found in Appendix~\ref*{sec:distribution}.


\def
\section{Using the Generating Function Machinery}
\label{sec:gf}


\begin{theorem}\label{thm:paths-gf-zeros}
  For a randomly (as described in Section~\ref{sec:description}) chosen path of
  length~$n$, the expected number of zeros is
  \begin{equation*}
    \E{Z_n} =
    \frac{4}{n+1}
    \sum_{0\leq k < \ell < \ceil{n/2}} \frac{\binom{n}{k}}{\binom{n}{\ell}}
    + \iverson*{$n$ even} \frac{1}{n+1}
    \left(\frac{2^n}{\binom{n}{n/2}} - 1\right) + 1.
  \end{equation*}
\end{theorem}

The remaining part of this section is devoted to the proof of this theorem.
The main technique is to model our lattice paths by means of combinatorial
classes and generating functions. For simplicity of notation,
we denote a combinatorial class by its corresponding generating function.

\begin{figure}
  \centering
    \begin{tikzpicture}[scale=0.25, latticepath/.style={very thick}]

    \draw (-22,-3) -- (-22,14);
    \draw (-24,0) -- (25,0);

    \newcommand{\Cpath}[3][]{
      \draw[latticepath, #1] ($(0,0) + #2$) -- ($(4,0) + #2$) --
      ($(2,2.82842712474619) + #2$) -- cycle;
      \node at ($(2,0) + #2$) [above] {#3};
    }
    \newcommand{\Cpathmirr}[3][]{
      \draw[latticepath, #1] ($(0,0) + #2$) -- ($(4,0) + #2$) --
      ($(2,-2.82842712474619) + #2$) -- cycle;
      \node at ($(2,0) + #2$) [below] {#3};      
    }

    \newcommand{\Cpathdown}[2]{
      \Cpath{#1}{#2}
      \draw[latticepath] ($(4,0) + #1$) -- ($(5,-1) + #1$);
    }

    \node at (-22,10) [left] {$s$};
    \Cpathdown{(-22,10)}{$C$}
    \Cpathdown{(-17,9)}{$C$}

    \draw[dotted] (-11,7) -- (-7,3);
    \draw[latticepath] (-6,2) -- (-5,1);
    \Cpathdown{(-5,1)}{$C$}

    \node[circle,inner sep=1.5pt,fill] at (0,0) {};
    \node[] at (0,0) [below] {$u$};
    \draw[latticepath] (0,0) -- (1,1);
    \Cpath{(1,1)}{$C$}
    \draw[latticepath] (5,1) -- (6,0);
    \draw[latticepath] (0,0) -- (1,-1);    
    \Cpathmirr{(1,-1)}{$C$}
    \draw[latticepath] (5,-1) -- (6,0);

    \node[circle,inner sep=1.5pt,fill] at (6,0) {};
    \node[] at (6,0) [below] {$u$};
    \draw[latticepath] (6,0) -- (7,1);
    \Cpath{(7,1)}{$C$}
    \draw[latticepath] (11,1) -- (12,0);
    \draw[latticepath] (6,0) -- (7,-1);    
    \Cpathmirr{(7,-1)}{$C$}
    \draw[latticepath] (11,-1) -- (12,0);

    \node[circle,inner sep=1.5pt,fill] at (12,0) {};
    \node[] at (12,0) [below] {$u$};
    \draw[dotted] (12,2.414) -- (18,2.414);
    \draw[dotted] (12,-2.414) -- (18,-2.414);

    \node[circle,inner sep=1.5pt,fill] at (18,0) {};
    \node[] at (18,0) [below] {$u$};
    \draw[latticepath] (18,0) -- (19,1);
    \Cpath{(19,1)}{$C$}
    \draw[latticepath] (23,1) -- (24,0);
    \draw[latticepath] (18,0) -- (19,-1);    
    \Cpathmirr{(19,-1)}{$C$}
    \draw[latticepath] (23,-1) -- (24,0);

    \node at (24,0) [below right] {$n$};

  \end{tikzpicture}

  \caption{Decomposition of a lattice path for $s\geq0$ marking zeros.}
  \label{fig:decomp-path-zeros}
\end{figure}

Concerning the generating functions, we mark a step to the right by
the variable~$z$ and a zero (except the last) by~$u$. Note that we do
not mark the zero at $(n,0)$ for technical reasons; we'll take this
into account at the end by adding a $1$ to the final result.
Thus, the coefficient of $z^nu^{r-1}$ of
the function $Q_s(z,u)$ (the generating function of all paths starting in
$(0,s)$ and ending in some $(n,0)$) equals the number of paths of length~$n$
and exactly $r$ zeros.

We also need the following auxiliary function. The generating
function~$\f{C}{z}$ counts all Catalan paths, i.\,e., paths starting and ending
at the same height, but not going below it. This equals
\begin{equation*}
  \f{C}{z} = \frac{1-\sqrt{1-4z^2}}{2z^2}.
\end{equation*}

In Figure~\ref{fig:decomp-path-zeros}, we give a schematic decomposition of a
path from $(0, s)$ to $(n, 0)$ for non-negative $s$. This decomposition
translates to the following parts of the generating function (the path is read
from the left to the right).
\begin{itemize}
\item We start by $s$ consecutive blocks of $\f{C}{z}$, each followed by a single
  descent encoded as $z$. This gives the paths from $(0,s)$ to their first
  zero (i.\,e., where it touches the horizontal axis for the first time).
\item We mark this zero by the symbol~$u$.
\item We either do a single ascent or a single decent (marked by a~$z$), then
  continue with a $\f{C}{z}$-block and do a single decent or ascent
  respectively (marked by a~$z$ as well) again. Thus, we are back at a zero.
\item We repeat such up/down blocks $z\f{C}{z}z$, each one preceded by a
  zero~$u$, a finite number of times.
\end{itemize}
If $s<0$, then the construction is the same, but everything is reflected at the
horizontal axis.

Continuing using the symbolic method---the description above is already part of
it, see, for example, Flajolet and
Sedgewick~\cite{Flajolet-Sedgewick:ta:analy}---the decomposition above
translates to the generating function
\begin{equation}
  \label{eq:path-gf-zeros}
  Q_s(z,u) = \frac{\f{C}{z}^{\abs{s}} z^{\abs{s}}}{1 - 2 u z^2 \f{C}{z}},
\end{equation}
which we will use from now on. Note that the coefficient $2$ reflects the fact
that there are two choices (up and down) for the blocks between zeros.

To obtain a nice explicit form, we perform a change of variables. The result is
stated in the following lemma.

\begin{lemma}\label{lem:transform-gf-zeros}
  With the transformation $z = v / (1+v^2)$ we have
  \begin{equation*}
    Q_s(z,u) = \frac{v^{\abs{s}}(1+v^2)}{1-v^2(2u-1)}.
  \end{equation*}
\end{lemma}

\begin{proof}
  Transforming the counting generating function of Catalan paths yields
  \begin{equation*}
    \f{C}{z} = 1+v^2.
  \end{equation*}
  Thus~\eqref{eq:path-gf-zeros} becomes
  \begin{equation*}
    Q_s(z,u) = 
    (1+v^2)^{\abs{s}}
    \Big(\frac{v}{1+v^2}\Big)^{\abs{s}}
    \frac{1}{1-2u\big(\frac{v}{1+v^2}\big)^2(1+v^2)}
  \end{equation*}
  and can be simplified to the expression stated in the lemma.
\end{proof}

The next step is to extract the coefficients out of the expressions obtained in
the previous lemma. First we rewrite the extraction of the coefficients from
the ``$z$-world'' to the ``$v$-world'', see
Lemma~\ref{lem:extract-coeffs-worlds}. Afterwards, in
Lemma~\ref{lem:coeffs-zeros}, the coefficients can be determined quite easily.

\begin{lemma}\label{lem:extract-coeffs-worlds}
  Let $F(z)$ be an analytic function in a neighborhood of the origin. Then we have
  \begin{equation*}
    [z^n] F(z) = [v^n] (1-v^2) (1+v^2)^{n-1}
      \f{F}{\frac{v}{1+v^2}}.
  \end{equation*}
\end{lemma}

\begin{proof}
  We use Cauchy's formula to extract the coefficients of $F(z)$ as
  \begin{equation*}
    [z^n] F(z) = \frac{1}{2\pi i}\oint_\calD \frac{dz}{z^{n+1}} F(z)
  \end{equation*}
  where $\calD$ is a positively oriented small circle around the origin. Under
  the transformation $z=v/(1+v^2)$, the circle $\calD$ is transformed to a
  contour $\calD'$ which still winds exactly once around the origin. Using
  Cauchy's formula again, we obtain
  \begin{align*}
    [z^n] F(z)
    &= \frac{1}{2\pi i}\oint_{\calD'} \frac{dv(1-v^2)}{(1+v^2)^2}
    \frac{(1+v^2)^{n+1}}{v^{n+1}} \f{F}{\frac{v}{1+v^2}} \\
    &= [v^n] (1-v^2) (1+v^2)^{n-1} \f{F}{\frac{v}{1+v^2}}.
  \end{align*}
\end{proof}

Now we are ready to calculate the desired coefficients.

\begin{lemma}\label{lem:coeffs-zeros}
  Suppose $n\equiv s\pmod 2$. Then we have
  \begin{equation*}
    [z^n] Q_s(z,1)
    = \binom{n}{(n-s)/2}
  \end{equation*}
  and, moreover,
  \begin{equation*}
    [z^n] \left.\frac{\partial}{\partial u} Q_s(z,u)\right\vert_{u=1}
    = 2 \sum_{k=0}^{(n-\abs{s})/2-1}\binom{n}{k}.
  \end{equation*}
\end{lemma}

\begin{proof}
  As $n \equiv s \pmod 2$, the number $n-s$ is even, and so we can
  set $\ell = \frac12(n-s)$. Then $[z^n]Q_s(z, 1)$ is the number of paths from
  $(0, s)$ to $(n, 0)$. These paths have $\ell$ up steps and $n-\ell$ down
  steps; thus there are $\binom{n}{\ell}$ many such paths.

  For the second part of this lemma, we restrict ourselves to $s\geq0$ (otherwise use $-s$ and the symmetry in~$s$
  of the generating function~\eqref{eq:path-gf-zeros} instead). We start with the result of
  Lemma~\ref{lem:transform-gf-zeros}. Taking the first derivative and setting
  $u=1$ yields
  \begin{equation*}
    \left.\frac{\partial}{\partial u} Q_s(z,u)\right\vert_{u=1}
    = \frac{2v^{s+2}(1+v^2)}{(1-v^2)^2}.
  \end{equation*}
  Thus, by using Lemma~\ref{lem:extract-coeffs-worlds}, we get
  \begin{equation*}
    [z^n] \frac{2v^{s+2}(1+v^2)}{(1-v^2)^2}
    = 2\,[v^{n-s-2}]\frac{(1+v^2)^{n}}{1-v^2}.
  \end{equation*}
  We use $\ell$ as above and get
  \begin{equation*}
    [v^{n-s-2}]\frac{(1+v^2)^n}{1-v^2}
    = [v^{2\ell-2}]\frac{(1+v^2)^n}{1-v^2}
    = [v^{\ell-1}] \frac{(1+v)^n}{1-v}
    = \sum_{k=0}^{\ell-1}\binom{n}{k},
  \end{equation*}
  which was claimed to hold.
\end{proof}

We are now ready to prove the main theorem (Theorem~\ref{thm:paths-gf-zeros})
of this section, which provides an expression for the expected number of
zeros. This exact expression is written as a double sum.

\begin{proof}[of Theorem~\ref{thm:paths-gf-zeros}]
  By Lemma~\ref{lem:coeffs-zeros}, the average number of zeros
  (except the zero at the end point) of
  a path of length~$n$ which starts in $(0,s)$ is
  \begin{equation*}
    \mu_{n,s} = \frac{[z^n] \left.\frac{\partial}{\partial u}
        Q_s(z,u)\right\vert_{u=1}}{[z^n] Q_s(z,1)}
    = \frac{2}{\binom{n}{\ell}} \sum_{k=0}^{\ell-1}\binom{n}{k},
  \end{equation*}
  where we have set $\ell = \frac12 (n-\abs{s})$ as in the proof of
  Lemma~\ref{lem:coeffs-zeros}. If $s=0$, this simplifies to
  \begin{equation}\label{eq:mu_n_0}
    \mu_{n,0} = \frac{2}{\binom{n}{n/2}} \sum_{k=0}^{n/2-1}\binom{n}{k}
    = \frac{2^n}{\binom{n}{n/2}} - 1.
  \end{equation}
  If $n\not\equiv s\pmod 2$, then we set $\mu_{n,s} = 0$.

  Summing up yields
  \begin{align*}
    \sum_{s=-n}^n \mu_{n,s}
    &= 2 \sum_{s=1}^n \mu_{n,s} + \mu_{n,0}
    = 4 \sum_{\ell=0}^{\ceil{n/2}-1}
    \frac{1}{\binom{n}{\ell}} \sum_{k=0}^{\ell-1}\binom{n}{k}
    + \mu_{n,0} \\
    &= 4 \sum_{0\leq k < \ell < \ceil{n/2}} \frac{\binom{n}{k}}{\binom{n}{\ell}}
    + \iverson*{$n$ even} 
    \left(\frac{2^n}{\binom{n}{n/2}} - 1\right).
  \end{align*}
  Dividing by the number $n+1$ of possible starting points and adding
  $1$ for the zero at $(n,0)$ completes the proof of
  Theorem~\ref{thm:paths-gf-zeros}.
\end{proof}
{
\section{Using the Generating Function Machinery}
\label{sec:gf}


\begin{theorem}\label{thm:paths-gf-zeros}
  For a randomly (as described in Section~\ref{sec:description}) chosen path of
  length~$n$, the expected number of zeros is
  \begin{equation*}
    \E{Z_n} =
    \frac{4}{n+1}
    \sum_{0\leq k < \ell < \ceil{n/2}} \frac{\binom{n}{k}}{\binom{n}{\ell}}
    + \iverson*{$n$ even} \frac{1}{n+1}
    \left(\frac{2^n}{\binom{n}{n/2}} - 1\right) + 1.
  \end{equation*}
\end{theorem}

The remaining part of this section is devoted to the proof of this theorem.
The main technique is to model our lattice paths by means of combinatorial
classes and generating functions. For simplicity of notation,
we denote a combinatorial class by its corresponding generating function.

\begin{figure}
  \centering
    \begin{tikzpicture}[scale=0.25, latticepath/.style={very thick}]

    \draw (-22,-3) -- (-22,14);
    \draw (-24,0) -- (25,0);

    \newcommand{\Cpath}[3][]{
      \draw[latticepath, #1] ($(0,0) + #2$) -- ($(4,0) + #2$) --
      ($(2,2.82842712474619) + #2$) -- cycle;
      \node at ($(2,0) + #2$) [above] {#3};
    }
    \newcommand{\Cpathmirr}[3][]{
      \draw[latticepath, #1] ($(0,0) + #2$) -- ($(4,0) + #2$) --
      ($(2,-2.82842712474619) + #2$) -- cycle;
      \node at ($(2,0) + #2$) [below] {#3};      
    }

    \newcommand{\Cpathdown}[2]{
      \Cpath{#1}{#2}
      \draw[latticepath] ($(4,0) + #1$) -- ($(5,-1) + #1$);
    }

    \node at (-22,10) [left] {$s$};
    \Cpathdown{(-22,10)}{$C$}
    \Cpathdown{(-17,9)}{$C$}

    \draw[dotted] (-11,7) -- (-7,3);
    \draw[latticepath] (-6,2) -- (-5,1);
    \Cpathdown{(-5,1)}{$C$}

    \node[circle,inner sep=1.5pt,fill] at (0,0) {};
    \node[] at (0,0) [below] {$u$};
    \draw[latticepath] (0,0) -- (1,1);
    \Cpath{(1,1)}{$C$}
    \draw[latticepath] (5,1) -- (6,0);
    \draw[latticepath] (0,0) -- (1,-1);    
    \Cpathmirr{(1,-1)}{$C$}
    \draw[latticepath] (5,-1) -- (6,0);

    \node[circle,inner sep=1.5pt,fill] at (6,0) {};
    \node[] at (6,0) [below] {$u$};
    \draw[latticepath] (6,0) -- (7,1);
    \Cpath{(7,1)}{$C$}
    \draw[latticepath] (11,1) -- (12,0);
    \draw[latticepath] (6,0) -- (7,-1);    
    \Cpathmirr{(7,-1)}{$C$}
    \draw[latticepath] (11,-1) -- (12,0);

    \node[circle,inner sep=1.5pt,fill] at (12,0) {};
    \node[] at (12,0) [below] {$u$};
    \draw[dotted] (12,2.414) -- (18,2.414);
    \draw[dotted] (12,-2.414) -- (18,-2.414);

    \node[circle,inner sep=1.5pt,fill] at (18,0) {};
    \node[] at (18,0) [below] {$u$};
    \draw[latticepath] (18,0) -- (19,1);
    \Cpath{(19,1)}{$C$}
    \draw[latticepath] (23,1) -- (24,0);
    \draw[latticepath] (18,0) -- (19,-1);    
    \Cpathmirr{(19,-1)}{$C$}
    \draw[latticepath] (23,-1) -- (24,0);

    \node at (24,0) [below right] {$n$};

  \end{tikzpicture}

  \caption{Decomposition of a lattice path for $s\geq0$ marking zeros.}
  \label{fig:decomp-path-zeros}
\end{figure}

Concerning the generating functions, we mark a step to the right by
the variable~$z$ and a zero (except the last) by~$u$. Note that we do
not mark the zero at $(n,0)$ for technical reasons; we'll take this
into account at the end by adding a $1$ to the final result.
Thus, the coefficient of $z^nu^{r-1}$ of
the function $Q_s(z,u)$ (the generating function of all paths starting in
$(0,s)$ and ending in some $(n,0)$) equals the number of paths of length~$n$
and exactly $r$ zeros.

We also need the following auxiliary function. The generating
function~$\f{C}{z}$ counts all Catalan paths, i.\,e., paths starting and ending
at the same height, but not going below it. This equals
\begin{equation*}
  \f{C}{z} = \frac{1-\sqrt{1-4z^2}}{2z^2}.
\end{equation*}

In Figure~\ref{fig:decomp-path-zeros}, we give a schematic decomposition of a
path from $(0, s)$ to $(n, 0)$ for non-negative $s$. This decomposition
translates to the following parts of the generating function (the path is read
from the left to the right).
\begin{itemize}
\item We start by $s$ consecutive blocks of $\f{C}{z}$, each followed by a single
  descent encoded as $z$. This gives the paths from $(0,s)$ to their first
  zero (i.\,e., where it touches the horizontal axis for the first time).
\item We mark this zero by the symbol~$u$.
\item We either do a single ascent or a single decent (marked by a~$z$), then
  continue with a $\f{C}{z}$-block and do a single decent or ascent
  respectively (marked by a~$z$ as well) again. Thus, we are back at a zero.
\item We repeat such up/down blocks $z\f{C}{z}z$, each one preceded by a
  zero~$u$, a finite number of times.
\end{itemize}
If $s<0$, then the construction is the same, but everything is reflected at the
horizontal axis.

Continuing using the symbolic method---the description above is already part of
it, see, for example, Flajolet and
Sedgewick~\cite{Flajolet-Sedgewick:ta:analy}---the decomposition above
translates to the generating function
\begin{equation}
  \label{eq:path-gf-zeros}
  Q_s(z,u) = \frac{\f{C}{z}^{\abs{s}} z^{\abs{s}}}{1 - 2 u z^2 \f{C}{z}},
\end{equation}
which we will use from now on. Note that the coefficient $2$ reflects the fact
that there are two choices (up and down) for the blocks between zeros.

To obtain a nice explicit form, we perform a change of variables. The result is
stated in the following lemma.

\begin{lemma}\label{lem:transform-gf-zeros}
  With the transformation $z = v / (1+v^2)$ we have
  \begin{equation*}
    Q_s(z,u) = \frac{v^{\abs{s}}(1+v^2)}{1-v^2(2u-1)}.
  \end{equation*}
\end{lemma}

\begin{proof}
  Transforming the counting generating function of Catalan paths yields
  \begin{equation*}
    \f{C}{z} = 1+v^2.
  \end{equation*}
  Thus~\eqref{eq:path-gf-zeros} becomes
  \begin{equation*}
    Q_s(z,u) = 
    (1+v^2)^{\abs{s}}
    \Big(\frac{v}{1+v^2}\Big)^{\abs{s}}
    \frac{1}{1-2u\big(\frac{v}{1+v^2}\big)^2(1+v^2)}
  \end{equation*}
  and can be simplified to the expression stated in the lemma.
\end{proof}

The next step is to extract the coefficients out of the expressions obtained in
the previous lemma. First we rewrite the extraction of the coefficients from
the ``$z$-world'' to the ``$v$-world'', see
Lemma~\ref{lem:extract-coeffs-worlds}. Afterwards, in
Lemma~\ref{lem:coeffs-zeros}, the coefficients can be determined quite easily.

\begin{lemma}\label{lem:extract-coeffs-worlds}
  Let $F(z)$ be an analytic function in a neighborhood of the origin. Then we have
  \begin{equation*}
    [z^n] F(z) = [v^n] (1-v^2) (1+v^2)^{n-1}
      \f{F}{\frac{v}{1+v^2}}.
  \end{equation*}
\end{lemma}

\begin{proof}
  We use Cauchy's formula to extract the coefficients of $F(z)$ as
  \begin{equation*}
    [z^n] F(z) = \frac{1}{2\pi i}\oint_\calD \frac{dz}{z^{n+1}} F(z)
  \end{equation*}
  where $\calD$ is a positively oriented small circle around the origin. Under
  the transformation $z=v/(1+v^2)$, the circle $\calD$ is transformed to a
  contour $\calD'$ which still winds exactly once around the origin. Using
  Cauchy's formula again, we obtain
  \begin{align*}
    [z^n] F(z)
    &= \frac{1}{2\pi i}\oint_{\calD'} \frac{dv(1-v^2)}{(1+v^2)^2}
    \frac{(1+v^2)^{n+1}}{v^{n+1}} \f{F}{\frac{v}{1+v^2}} \\
    &= [v^n] (1-v^2) (1+v^2)^{n-1} \f{F}{\frac{v}{1+v^2}}.
  \end{align*}
\end{proof}

Now we are ready to calculate the desired coefficients.

\begin{lemma}\label{lem:coeffs-zeros}
  Suppose $n\equiv s\pmod 2$. Then we have
  \begin{equation*}
    [z^n] Q_s(z,1)
    = \binom{n}{(n-s)/2}
  \end{equation*}
  and, moreover,
  \begin{equation*}
    [z^n] \left.\frac{\partial}{\partial u} Q_s(z,u)\right\vert_{u=1}
    = 2 \sum_{k=0}^{(n-\abs{s})/2-1}\binom{n}{k}.
  \end{equation*}
\end{lemma}

\begin{proof}
  As $n \equiv s \pmod 2$, the number $n-s$ is even, and so we can
  set $\ell = \frac12(n-s)$. Then $[z^n]Q_s(z, 1)$ is the number of paths from
  $(0, s)$ to $(n, 0)$. These paths have $\ell$ up steps and $n-\ell$ down
  steps; thus there are $\binom{n}{\ell}$ many such paths.

  For the second part of this lemma, we restrict ourselves to $s\geq0$ (otherwise use $-s$ and the symmetry in~$s$
  of the generating function~\eqref{eq:path-gf-zeros} instead). We start with the result of
  Lemma~\ref{lem:transform-gf-zeros}. Taking the first derivative and setting
  $u=1$ yields
  \begin{equation*}
    \left.\frac{\partial}{\partial u} Q_s(z,u)\right\vert_{u=1}
    = \frac{2v^{s+2}(1+v^2)}{(1-v^2)^2}.
  \end{equation*}
  Thus, by using Lemma~\ref{lem:extract-coeffs-worlds}, we get
  \begin{equation*}
    [z^n] \frac{2v^{s+2}(1+v^2)}{(1-v^2)^2}
    = 2\,[v^{n-s-2}]\frac{(1+v^2)^{n}}{1-v^2}.
  \end{equation*}
  We use $\ell$ as above and get
  \begin{equation*}
    [v^{n-s-2}]\frac{(1+v^2)^n}{1-v^2}
    = [v^{2\ell-2}]\frac{(1+v^2)^n}{1-v^2}
    = [v^{\ell-1}] \frac{(1+v)^n}{1-v}
    = \sum_{k=0}^{\ell-1}\binom{n}{k},
  \end{equation*}
  which was claimed to hold.
\end{proof}

We are now ready to prove the main theorem (Theorem~\ref{thm:paths-gf-zeros})
of this section, which provides an expression for the expected number of
zeros. This exact expression is written as a double sum.

\begin{proof}[of Theorem~\ref{thm:paths-gf-zeros}]
  By Lemma~\ref{lem:coeffs-zeros}, the average number of zeros
  (except the zero at the end point) of
  a path of length~$n$ which starts in $(0,s)$ is
  \begin{equation*}
    \mu_{n,s} = \frac{[z^n] \left.\frac{\partial}{\partial u}
        Q_s(z,u)\right\vert_{u=1}}{[z^n] Q_s(z,1)}
    = \frac{2}{\binom{n}{\ell}} \sum_{k=0}^{\ell-1}\binom{n}{k},
  \end{equation*}
  where we have set $\ell = \frac12 (n-\abs{s})$ as in the proof of
  Lemma~\ref{lem:coeffs-zeros}. If $s=0$, this simplifies to
  \begin{equation}\label{eq:mu_n_0}
    \mu_{n,0} = \frac{2}{\binom{n}{n/2}} \sum_{k=0}^{n/2-1}\binom{n}{k}
    = \frac{2^n}{\binom{n}{n/2}} - 1.
  \end{equation}
  If $n\not\equiv s\pmod 2$, then we set $\mu_{n,s} = 0$.

  Summing up yields
  \begin{align*}
    \sum_{s=-n}^n \mu_{n,s}
    &= 2 \sum_{s=1}^n \mu_{n,s} + \mu_{n,0}
    = 4 \sum_{\ell=0}^{\ceil{n/2}-1}
    \frac{1}{\binom{n}{\ell}} \sum_{k=0}^{\ell-1}\binom{n}{k}
    + \mu_{n,0} \\
    &= 4 \sum_{0\leq k < \ell < \ceil{n/2}} \frac{\binom{n}{k}}{\binom{n}{\ell}}
    + \iverson*{$n$ even} 
    \left(\frac{2^n}{\binom{n}{n/2}} - 1\right).
  \end{align*}
  Dividing by the number $n+1$ of possible starting points and adding
  $1$ for the zero at $(n,0)$ completes the proof of
  Theorem~\ref{thm:paths-gf-zeros}.
\end{proof}
}


\section{A Probabilistic Approach}
\label{sec:prob}


\begin{theorem}\label{thm:paths-prob}
  For a randomly (as described in Section~\ref{sec:description}) chosen path of
  length~$n$, the expected number of zeros is
  \begin{equation*}
    \E{Z_n} = \Hodd_{n+1}.
  \end{equation*}
\end{theorem}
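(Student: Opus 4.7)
My plan is to establish a stronger ``stationary uniformity'' statement: for every $k\in\{0,1,\ldots,n\}$, the random height $H_k$ of $P_n$ at horizontal coordinate~$k$ is uniformly distributed over the feasible set $\set{-(n-k),-(n-k)+2,\ldots,n-k-2,n-k}$ of cardinality $n-k+1$. This is the property foreshadowed at the end of Section~\ref{sec:description}, namely that the equidistribution at the starting point carries over to every vertical slice of the path.

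Given the lemma, the theorem follows immediately. Since $0$ lies in the feasible set for $H_k$ precisely when $n-k$ is even, linearity of expectation together with the substitution $m=n-k+1$ yields
\[
\E{Z_n}=\sum_{k=0}^n\P{H_k=0}=\sum_{k=0}^n\frac{\iverson*{$n-k$ even}}{n-k+1}=\sum_{m=1}^{n+1}\frac{\iverson*{$m$ odd}}{m}=\Hodd_{n+1}.
\]

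I would prove the uniformity lemma by induction on~$k$. The base $k=0$ is the definition of the model. For the inductive step, conditional on $H_k=h$, the suffix of the path from $(k,h)$ to $(n,0)$ is uniform among all $\binom{n-k}{(n-k-h)/2}$ lattice paths with those endpoints, so the conditional probability of reaching a neighbouring feasible value $h'=h\pm1$ at time $k+1$ equals $\binom{n-k-1}{(n-k-1-h')/2}\big/\binom{n-k}{(n-k-h)/2}$. Weighting by the inductive hypothesis $\P{H_k=h}=1/(n-k+1)$ and summing the two contributions $h=h'\pm1$, the marginal collapses via the elementary factorial identity
\[
\frac{1}{\binom{n-k}{a}}+\frac{1}{\binom{n-k}{a+1}}=\frac{n-k+1}{(n-k)\binom{n-k-1}{a}}\qquad\text{with }a=\tfrac{n-k-h'-1}{2}
\]
to $\P{H_{k+1}=h'}=1/(n-k)$, independently of $h'$, as required.

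The main obstacle is the inductive step itself: the one-step transition probabilities depend asymmetrically on $h$ (they bias motion toward the axis when $\abs{h}$ is large), so uniformity is \emph{not} preserved pointwise---only after averaging the two incoming contributions. The small binomial identity above is precisely what absorbs this asymmetry and leaves the marginal uniform; everything else is bookkeeping.
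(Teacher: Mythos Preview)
Your proposal is correct and follows essentially the same approach as the paper: prove the uniformity lemma (Lemma~\ref{lem:prob-point-uniform}) that $H_k$ is uniform on its $n-k+1$ feasible values by induction forward in~$k$, then deduce the theorem by linearity of expectation exactly as you do. The only cosmetic difference is that the paper first passes to an equivalent urn model, which gives the one-step transition probabilities in the compact form $\P{W_{i+1}=\ell\pm1\mid W_i=\ell}=(n-i\mp\ell)/(2(n-i))$ and makes the inductive collapse a one-line arithmetic check, whereas you stay with path counts and absorb the asymmetry via your binomial identity; the two computations are equivalent.
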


Before proving the theorem, we consider an equivalent probability model for our
random paths formulated as an urn model. A number $R$ from $\{0,\dots,n\}$ is chosen uniformly at random.
We place $R$ red balls and $B=n-R$ black balls in an urn.
Subsequently, in $n$ rounds the balls are taken from the urn (without replacements), in each round choosing one
uniformly at random. The color of the ball chosen in round $i$ is denoted by $U_i$.

We construct a random walk $(W_i)_{0\le i\le n}$ on $\{-n, \ldots, n\}$ from
$U_1$, \ldots, $U_n$ by setting $W_0 = R - B = 2R-n$ and
\begin{equation*}
  W_i =
  \begin{cases}
    W_{i-1} + 1&\text{ if $U_i=\mathrm{black}$},\\
    W_{i-1} - 1&\text{ if $U_i=\mathrm{red}$}
  \end{cases}
\end{equation*}
for $1\le i\le n$.
In each step, $W_i$ equals the difference of the number of remaining red and
black balls in the urn. Clearly, then, $W_n=0$.

One can look at the trajectories of this random walk, represented in
the grid $\{0,\ldots,n\}\times\{-n,\dots,n\}$ as
sequences $((0,W_0),(1,W_1),\dots,(n,W_n))$.
\def\appendixequrnpaths{
Only sequences with $W_0 \equiv n \pmod 2$ and
$W_{i} - W_{i-1}\in\{1,-1\}$ for $1\le i \le n$ and $W_n=0$ can have a probability bigger than 0.
We denote the set of these paths by ${\mathcal{P}}_n$.

Now fix $w_0$ with $|w_0|\le n$ and  $w_0\equiv n \pmod 2$.
Clearly, all sequences $p_n\in{\mathcal{P}}_n$ starting with $(0,w_0)$
together have probability $1/(n+1)$.
What is the probability of
a path $p_n=((0,w_0),(1,w_1),\dots,(n,w_n)) \in {\mathcal{P}}_n$
to appear as trajectory of the random walk?

The starting point $(0, w_0)$ fixes the number of black and red balls as
$r=(n+w_0)/2$ and $b=(n-w_0)/2$, respectively.
Let us number the $n$ balls arbitrarily with $1$, \ldots, $n$.
The random experiment in essence arranges the balls in
one of the $n!$ possible orders, each of these orders
having the same probability.
A trajectory is determined not by this order, but
by the pattern of red and black balls in the sequence.
Thus $r!\,b!$ orders lead to the same trajectory $p_n$.
Thus, given $w_0$, all $\binom{n}{r,b}$ paths starting in $(0, w_0)$ are equally likely.
Therefore this urn model is equivalent to the model given in Section~\ref{sec:description}.
}
Appendix~\ref*{sec:appendix:prob} explains the equivalence between
the two models.

In order to prove Theorem~\ref{thm:paths-prob}, we need the following
property of our paths.

\begin{lemma}\label{lem:prob-point-uniform}
  Let $m\in\N_0$ with $m\leq n$. The probability that a random path $P_n$ (as
  defined in Section~\ref{sec:description}) runs through $(n-m,k)$ is
  \begin{equation}\label{eq:probability}
    \P{(n-m, k)\in P_n} = \frac{1}{m+1}
  \end{equation}
  for all $k$ with $\abs{k}\leq m$ and
  $k\equiv m \pmod 2$, otherwise $0$.
\end{lemma}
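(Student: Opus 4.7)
The plan is to use the urn model equivalence discussed above (detailed in Remark~\ref{rem:equivalence-urn-paths}) and reduce the claim to a statement about the marginal distribution of the urn contents at time $n-m$. Write $R_i$ for the number of red balls remaining after $i$ draws; then $R_i+B_i=n-i$ and $W_i=R_i-B_i=2R_i-(n-i)$, so the event $(n-m,k)\in P_n$ is equivalent to $R_{n-m}=(m+k)/2$. The parity condition $k\equiv m\pmod 2$ makes this value an integer, the bound $\abs{k}\leq m$ is the condition $R_{n-m}\in\{0,1,\ldots,m\}$, and outside this range the probability is $0$. Thus the lemma reduces to showing that $R_{n-m}$ is uniformly distributed on $\{0,1,\ldots,m\}$.

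To prove this I would condition on the initial urn composition. By exchangeability of sampling without replacement, conditionally on $R$, the $m$ balls remaining after $n-m$ draws form a uniformly random size-$m$ subset of the $n$ original balls, so
\begin{equation*}
  \P{R_{n-m}=j \mid R} = \frac{\binom{R}{j}\binom{n-R}{m-j}}{\binom{n}{m}}.
\end{equation*}
Averaging against $R \sim \mathrm{Unif}(\{0,\ldots,n\})$ and applying the Vandermonde-style identity
\begin{equation*}
  \sum_{R=0}^{n}\binom{R}{j}\binom{n-R}{m-j} = \binom{n+1}{m+1}
\end{equation*}
then gives $\P{R_{n-m}=j} = \binom{n+1}{m+1}/\bigl((n+1)\binom{n}{m}\bigr) = 1/(m+1)$, as required. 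The identity is standard (a one-line ``hockey-stick'' argument classifying size-$(m+1)$ subsets of $\{0,1,\ldots,n\}$ by their $(j+1)$-st smallest element), so no real obstacle is expected; the only delicate point is the bookkeeping of parities and support, but the passage to the urn model makes these transparent.

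An even slicker, identity-free route couples $R$ to $n+1$ i.i.d.\ uniform $[0,1]$ variables $X_0,X_1,\ldots,X_n$ by coloring ball $i$ red iff $X_i<X_0$; then $R=\#\{i\geq 1 : X_i<X_0\}$ is uniform on $\{0,\ldots,n\}$ by symmetry. Independently, the $m$ remaining ball indices form a uniform size-$m$ subset of $\{1,\ldots,n\}$, and by exchangeability of the $X$'s the rank of $X_0$ among itself and the $m$ corresponding $X$-values is uniform on $\{1,\ldots,m+1\}$. Hence $R_{n-m}$, being that rank minus one, is uniform on $\{0,\ldots,m\}$, matching the probabilistic spirit of this section.
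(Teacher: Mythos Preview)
Your argument is correct, but it is genuinely different from the paper's. The paper proceeds by backward induction on $m$: from the urn description it reads off the one-step transition probabilities
\[
\P{W_{i+1}=\ell\pm1\mid W_i=\ell}=\frac{n-i\mp\ell}{2(n-i)},
\]
notes that the base case $m=n$ is exactly the uniform choice of the starting point, and for the induction step computes
\[
\P{W_{n-m}=k}=\frac{(m+1-(k-1))+(m+1+(k+1))}{(m+2)\,2(m+1)}=\frac{1}{m+1}.
\]
So the paper uses only the Markov property and a one-line arithmetic check.

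Your first route bypasses induction entirely by recognising the conditional law of $R_{n-m}$ given $R$ as hypergeometric and then averaging with the identity $\sum_R\binom{R}{j}\binom{n-R}{m-j}=\binom{n+1}{m+1}$; this is clean and gives the result in one shot, at the cost of invoking that identity. Your second, coupling-based route is the nicest of the three: it is identity-free and makes the uniformity of $W_{n-m}$ an immediate consequence of exchangeability. It is in fact the ``P\'olya urn'' viewpoint the paper alludes to just after stating the lemma (reversing the path turns the model into a two-colour contagion urn, where such uniformity is classical), so you have essentially supplied the proof the paper only gestures at. Either of your arguments would be a perfectly acceptable substitute; the paper's induction is the most self-contained, while your coupling is the most conceptual.
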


The proof of this lemma can be found in Appendix~\ref*{sec:appendix:prob}.

A closer look reveals that when we reverse the paths, our model is
equivalent to a contagion P{\'o}lya urn model with two colors,
starting with one ball of each color, where we sample with replacement
and put another ball of the color just drawn into the urn. In this
setting, uniform distribution for feasible points with the same
first coordinate and hence the result of the lemma are well-known
phenomena. These results and more on the urn model can be found,
for example, in Mahmoud~\cite{Mahmoud:2008:Polya-urn-models}.

We continue with the actual proof of our theorem.

\def\appendixproofprobuniform{
\begin{proof}[of Lemma~\ref{lem:prob-point-uniform}]
  We compute the transition probabilities by considering the urn
  model. The probabilty of the event $U_{i+1}=\mathrm{black}$ is the number of
  remaining black balls divided by the number of remaining balls $n-i$. If
  $W_i=\ell$, then there are still $(n-i+\ell)/2$ red and $(n-i-\ell)/2$ black
  balls in the urn.

  Thus
  \begin{equation}\label{eq:transition-probabilities}
    \begin{aligned}
      \P{W_{i+1}=\ell+1 \mid W_i = \ell} &= \frac{n-i-\ell}{2(n-i)},\\
      \P{W_{i+1}=\ell-1 \mid W_i = \ell} &= \frac{n-i+\ell}{2(n-i)}.
    \end{aligned}
  \end{equation}
  We now prove the claim by backwards induction on $m$. For $m=n$, the
  assertion follows directly from the probability model.

  For $m<n$, we obtain
  \begin{align*}
    \P{(n-m, k)\in P_n}&=\P{W_{n-m}=k}\\
    &=\P{W_{n-m}=k\mid
      W_{n-m-1}=k-1}\P{W_{n-m-1}=k-1}\\
    &\phantom{=}\hphantom{0}+
    \P{W_{n-m}=k\mid
      W_{n-m-1}=k+1}\P{W_{n-m-1}=k+1}\\
    &=\frac{(m+1-(k-1))+(m+1+(k+1))}{(m+2)\,2(m+1)}=\frac{1}{m+1}
  \end{align*}
  by~\eqref{eq:transition-probabilities} and the induction hypothesis.
\end{proof}
}

\bgroup\def\endproof{}
\begin{proof}[of Theorem~\ref{thm:paths-prob}]
  By Lemma~\ref{lem:prob-point-uniform}, the expected number of zeros of
  $P_n$ is
  \begin{equation*}
    \E{Z_n} =
    \sum_{m=0}^{n} \P{(n-m,0) \in P_n}
    = \sum_{m=0}^{n} \frac{\iverson*{$m$ even}}{m+1}
    = \sum_{m=1}^{n+1} \frac{\iverson*{$m$ odd}}{m}
    = \Hodd_{n+1}.\tag*{\qed}
  \end{equation*}
\end{proof}
\egroup


\section{Additional Results}
\label{sec:additional}

The expected number of zeros can be evaluated asymptotically. We
obtain
\begin{corollary}
  \begin{equation*}
    \E{Z_n} =
    \frac12 \log n + \frac{\gamma+\log2}{2}
    + \frac{1+\iverson*{$n$ even}}{2n} - \frac{2+9\iverson*{$n$
        even}}{12n^2} + 
    \Oh[Big]{\frac{1}{n^3}}
  \end{equation*}
  asymptotically as $n$ tends to infinity.
\end{corollary}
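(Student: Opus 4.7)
The plan is to start from Theorem~\ref{thm:paths-prob}, which gives the exact identity $\E{Z_n}=\Hodd_{n+1}$, and then derive the asymptotic expansion purely by expanding odd harmonic numbers. No further combinatorial or probabilistic input is needed beyond the theorem already proven.

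The first step is to reduce $\Hodd$ to ordinary harmonic numbers via the identity $\Hodd_n = H_n - \tfrac12 H_{\floor{n/2}}$ mentioned in Section~\ref{sec:results}. Applied at $n+1$, this gives
\begin{equation*}
  \E{Z_n} = H_{n+1} - \tfrac12 H_{\floor{(n+1)/2}}.
\end{equation*}
Into both terms I substitute the standard Euler--Maclaurin expansion $H_N = \log N + \gamma + \tfrac1{2N} - \tfrac1{12N^2} + \Oh{1/N^4}$. Then I do a case split on the parity of $n$: if $n$ is odd then $\floor{(n+1)/2}=(n+1)/2$, while if $n$ is even then $\floor{(n+1)/2}=n/2$. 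In each case the $\gamma$-contributions combine to $\tfrac12\gamma$, the $\log$-contributions combine (after expanding $\log(n+1)$ and $\log((n+1)/2)$ or $\log(n/2)$ in powers of $1/n$) to $\tfrac12\log n + \tfrac12\log 2$, which matches the claimed leading terms $\tfrac12\log n + \tfrac{\gamma+\log 2}{2}$.

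The bulk of the work is the careful bookkeeping of the $1/n$ and $1/n^2$ coefficients. In the odd case, expanding $\log(n+1)=\log n + 1/n - 1/(2n^2)+\Oh{1/n^3}$ and $\log((n+1)/2) = \log n - \log 2 + 1/n - 1/(2n^2)+\Oh{1/n^3}$, the $1/n$ and $1/n^2$ pieces from the logarithms partially cancel against the $\tfrac1{2(n+1)}$, $\tfrac1{(n+1)}$ and $\tfrac1{12(n+1)^2}$, $\tfrac1{3(n+1)^2}$ terms; after rewriting each $1/(n+1)^k$ as $1/n^k(1-k/n+\cdots)$ and collecting, one checks the $1/n$-coefficient equals $\tfrac12$ and the $1/n^2$-coefficient equals $-\tfrac{2}{12}=-\tfrac16$. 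In the even case the additional $1/n$ contribution from $\tfrac12 H_{n/2}$ supplies an extra $1/(2n)$ and an extra $-1/(12\cdot(n/2)^2)\cdot(-\tfrac12)=\tfrac1{6n^2}$ adjustment, which after collection yields $1/n$-coefficient $1$ and $1/n^2$-coefficient $-\tfrac{11}{12}=-(2+9)/12$. These two cases are exactly encoded by the Iversonian factors $\iverson*{$n$ even}$ in the claimed expression, so the two formulas can be merged into the single statement of the corollary.

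The only genuine difficulty is the arithmetic bookkeeping of the $\Oh{1/n^2}$ coefficient, since several small contributions (from $\log(n+1)$, $\tfrac1{2(n+1)}$, $\tfrac1{12(n+1)^2}$, and the analogous terms at argument $\floor{(n+1)/2}$) must be combined and the odd/even cases compared; once this is done correctly the identification with $\tfrac{1+\iverson*{$n$ even}}{2n} - \tfrac{2+9\iverson*{$n$ even}}{12n^2}$ is forced. The error term $\Oh{1/n^3}$ is safely controlled by retaining one more term in each expansion, which is uniformly $\Oh{1/n^4}$ and thus contributes $\Oh{1/n^3}$ after the manipulations. (Alternatively, one may prove and invoke a stand-alone lemma giving the asymptotic expansion of $\Hodd_n$ directly, as is done in Appendix~\ref*{sec:asymptotics}, and then simply substitute $n+1$ for $n$.)
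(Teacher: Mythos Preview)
Your proposal is correct and follows essentially the same route as the paper: start from $\E{Z_n}=\Hodd_{n+1}$ (Theorem~\ref{thm:paths-prob}), rewrite $\Hodd$ via $\Hodd_m=H_m-\tfrac12 H_{\floor{m/2}}$, and feed in the standard expansion of $H_N$. The only cosmetic difference is that the paper packages the expansion of $\Hodd_n$ as a separate lemma (Lemma~\ref{lem:harmonic-asy}) and avoids your explicit parity case split by writing $\floor{n/2}=(n-1+\alpha_n)/2$ with $\alpha_n=\iverson*{$n$ even}$ and simplifying via $\alpha_n^2=\alpha_n$; your parenthetical remark already anticipates this.
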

The proof of this result uses the well-known asymptotic expansion of the harmonic numbers.
  The actual asymptotic computations\footnote{A worksheet containing
    the computations can be found at
    \url{http://www.danielkrenn.at/downloads/quicksort-paths/quicksort-paths.ipynb}.}
  have been carried out using the asymptotic
  expansions module \cite{Hackl-Krenn:2015:asy-sage} of
  SageMath~\cite{SageMath:2016:7.0}, see
  Appendix~\ref*{sec:asymptotics}.

By combining the generating function and probabilistic approach we
obtain the following identity.
\begin{theorem}
  For $n\ge 0$, we have
  \begin{align*}
    &\mathrel{\phantom{=}} \frac{4}{n+1} \sum_{0\leq k < \ell <
      \ceil{n/2}} \frac{\binom{n}{k}}{\binom{n}{\ell}} + \iverson*{$n$
      even} \frac{1}{n+1}
    \left(\frac{2^n}{\binom{n}{n/2}} - 1\right) + 1\\
    &= \frac{1}{n+1} \sum_{m=0}^{\floor{n/2}} \sum_{\ell=m}^{n-m}
    \frac{\binom{2m}{m} \binom{n-2m}{\ell-m}}{\binom{n}{\ell}}
    = \Hodd_{n+1}.
  \end{align*}
\end{theorem}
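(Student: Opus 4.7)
The plan is to recognize each of the three expressions as a distinct representation of $\E{Z_n}$, the expected number of zeros in the random path model of Section~\ref{sec:description}. The outer equality (first expression $=\Hodd_{n+1}$) is already delivered by the chain Theorem~\ref{thm:paths-gf-zeros}~$=\E{Z_n}=$~Theorem~\ref{thm:paths-prob}, so the substantive task is to identify the middle double sum with $\E{Z_n}$ by a direct path-counting argument — essentially reconstructing the original formulation of Aumüller and Dietzfelbinger.

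To do this, I would parameterize each of the $n+1$ equally likely starting points as $(0, n-2\ell)$ with $\ell\in\{0,1,\ldots,n\}$, so that the total number of lattice paths from $(0, n-2\ell)$ to $(n,0)$ equals $\binom{n}{\ell}$. For a potential zero at $(n-2m, 0)$ with $m\in\{0,1,\ldots,\floor{n/2}\}$, a path through that point splits into two independent pieces — an initial $(n-2m)$-step segment reaching $(n-2m, 0)$ and a final $2m$-step segment returning to $(n,0)$. Counting up- and down-steps gives the two binomial factors $\binom{n-2m}{\ell-m}$ and $\binom{2m}{m}$ respectively, with the convention $\binom{n-2m}{\ell-m}=0$ outside $m\leq\ell\leq n-m$ automatically encoding the feasibility constraint that both sub-paths exist.

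Conditioning on the starting point and summing thus yields
\begin{equation*}
  \P{(n-2m,0)\in P_n} = \frac{1}{n+1}\sum_{\ell=m}^{n-m}\frac{\binom{2m}{m}\binom{n-2m}{\ell-m}}{\binom{n}{\ell}},
\end{equation*}
and summing over all admissible $m$ by linearity of expectation produces the middle expression as $\E{Z_n}$. Combined with Theorems~\ref{thm:paths-gf-zeros} and~\ref{thm:paths-prob} this closes the three-way identity.

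I do not anticipate a serious obstacle: once the right parameterization is fixed, the argument is routine path-decomposition bookkeeping. The only mild care needed is with the boundary cases — for example $m=0$ contributes exactly $1$ (the unavoidable zero at the endpoint $(n,0)$, matching the $+1$ appearing in the first expression), and for even $n$ the term $m=n/2$ contributes $\tfrac{1}{n+1}$ (the lone path starting at the origin). A purely algebraic derivation going directly between the first and middle double sums is of course also possible — via Vandermonde-type identities or automated hypergeometric techniques — but the combinatorial route above, bypassing both via $\E{Z_n}$, is by far the most transparent.
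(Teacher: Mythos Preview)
Your proposal is correct and follows essentially the same route as the paper: the equalities between the first and third expressions are exactly Theorems~\ref{thm:paths-gf-zeros} and~\ref{thm:paths-prob}, and the middle double sum is identified with $\E{Z_n}$ by precisely the path-decomposition count you describe (paths from $(0,n-2\ell)$ via $(n-2m,0)$ to $(n,0)$ split as $\binom{n-2m}{\ell-m}\binom{2m}{m}$), which is the argument the paper attributes to~\cite{AumullerD15} and reproduces in its combinatorial proof.
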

The second expression for the expected number of zeros, but without
taking the zero at $(n,0)$ into account, has been given in
\cite[displayed equation after (14)]{AumullerD15}.  In
Appendix~\ref*{sec:identity} we give two direct proofs of the identity
above: One of them follows a computer generated proof (``creative telescoping'')
by extracting the essential recurrence. The second proof is ``human'' and
completely elementary using not more than Vandermonde's convolution.

Furthermore, the generating function machinery allows us to determine the distribution of the 
number~$Z_n$ of zeros. Beside an exact formula (see Appendix~\ref*{sec:distribution}), we get the following asymptotic result.

\begin{theorem}
  Let $0<\eps\leq\frac12$. For positive integers $r$ with $r =
  \Oh{n^{1/2-\eps}}$, we have asymptotically
  \begin{equation*}
    \P{Z_n = r} = \frac{1}{r(r+1)} \left(1 + \Oh{1/n^{2\eps}}\right)
  \end{equation*}
  as $n$ tends to infinity.
\end{theorem}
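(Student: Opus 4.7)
My plan is to derive the exact distribution from the generating function $Q_s(z,u)$ of Appendix~A (where $u$ marks interior zeros, so $[u^{r-1}]$ selects paths with exactly $r$ zeros) and then extract the asymptotic $1/(r(r+1))$ behavior from the resulting binomial sum. Concretely, $\P{Z_n=r}$ is obtained by extracting $[z^n u^{r-1}]Q_s(z,u)$, normalizing by $[z^n]Q_s(z,1)=\binom{n}{\ell}$ (with $\ell=(n-\abs{s})/2$), and averaging uniformly over the $n+1$ feasible starting points~$s$.

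Starting from $Q_s(z,u) = v^{\abs{s}}(1+v^2)/(1-v^2(2u-1))$ of Lemma~\ref{lem:transform-gf-zeros}, the geometric expansion of the denominator combined with the negative-binomial identity $\sum_{j\ge 0}\binom{j+r-1}{r-1}(-v^2)^j = (1+v^2)^{-r}$ yields
\begin{equation*}
[u^{r-1}]Q_s(z,u) = \frac{2^{r-1}\,v^{\abs{s}+2(r-1)}}{(1+v^2)^{r-1}}.
\end{equation*}
Applying Lemma~\ref{lem:extract-coeffs-worlds} directly in the $v$-world then extracts $[z^n]$ in closed form, giving the count $2^{r-1}\bigl[\binom{n-r}{\ell-r+1} - \binom{n-r}{\ell-r}\bigr]$ of paths with exactly $r$ zeros from $(0,s)$ to $(n,0)$. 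Dividing by $\binom{n}{\ell}$ and summing over $s$ (exploiting the $\pm s$ symmetry and treating a separate $s=0$ term when $n$ is even) delivers the exact formula of Theorem~\ref{thm:distribution}, whose summand simplifies algebraically to $(\ell)_{r-1}(n-2\ell+r-1)/(n)_r$, where $(x)_k$ denotes the falling factorial.

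For the asymptotic statement, I reparameterize via $m=n-2\ell$ and use $(x)_k = x^k(1+\Oh{k^2/x})$ valid whenever $x \gg k^2$, so $(\ell)_{r-1}/(n)_r \approx \ell^{r-1}/n^r$ with relative error $\Oh{r^2/n}=\Oh{n^{-2\eps}}$ in the bulk. Replacing the sum over every second integer by $\tfrac12\int_0^n dm$ (the Euler--Maclaurin correction being of the same order) reduces the expression to
\begin{equation*}
\P{Z_n=r} \approx \frac{2^r}{(n+1)\cdot 2^r n^r}\int_0^n u^{r-1}(n-u)\,du = \frac{n}{(n+1)\,r(r+1)} = \frac{1}{r(r+1)}\bigl(1+\Oh{1/n^{2\eps}}\bigr).
\end{equation*}

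The main obstacle will be controlling the error uniformly as $r$ ranges up to $n^{1/2-\eps}$. The approximation $(x)_k\approx x^k$ has relative error $\Oh{k^2/x}$ that is not uniformly small when $x$ is comparable to $k^2$; in our setting this is the range $\ell\lesssim r^2$. Since terms with $\ell<r-1$ vanish outright (because $(\ell)_{r-1}=0$), and terms in the intermediate range $r-1 \le \ell \lesssim r^2$ contribute at most $\Oh{r^{2r}/n^{r-1}}$ in total, which for $r=\Oh{n^{1/2-\eps}}$ decays much faster than $n^{-2\eps}$ relative to the leading order $n/(2^r r(r+1))$, their effect is absorbed into the claimed error. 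The bulk range then admits the sharper Taylor expansion $(x)_k = x^k\exp(-k^2/(2x)+\Oh{k^3/x^2})$, whose first-order correction integrated against $\ell^{r-1}(n-2\ell+r-1)/n^r$ yields a contribution of exactly order $\Oh{1/n^{2\eps}}$, matching the claim.
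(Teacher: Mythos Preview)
Your derivation of the intermediate sum representation follows the paper exactly (extracting $[u^{r-1}]$, applying Lemma~\ref{lem:extract-coeffs-worlds}, normalizing, and averaging over~$s$). Note, though, that what you call ``the exact formula of Theorem~\ref{thm:distribution}'' is really the intermediate sum~\eqref{eq:distribution-intermediate}; Theorem~\ref{thm:distribution} itself already gives a \emph{closed form} with no summation.

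For the asymptotic step the paper takes a different and shorter route. It first evaluates the sum $\sum_{\ell}(\ell)_{r-1}(n-2\ell+r-1)$ \emph{exactly}, via upper summation (Lemma~\ref{lem:distribtion-sum}), obtaining the closed form $r!\binom{\ceil{n/2}}{r}\bigl(2\ceil{n/2}/(r(r+1))+O(1)\bigr)$. Then it applies the one-line estimate $c!\binom{N}{c}=N^c(1+O(N^{-2\eps}))$ of Lemma~\ref{lem:quo-factorials} to each binomial factor in Theorem~\ref{thm:distribution}; the $\Oh{n^{-2\eps}}$ error falls out immediately, and the even-$n$ correction is seen to be of lower order in one further line. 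No integral approximation, tail splitting, or Euler--Maclaurin argument is needed.

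Your integral-approximation approach is correct as well: the dominant error indeed comes from $(n)_r\approx n^r$ with relative error $O(r^2/n)=O(n^{-2\eps})$, the small-$\ell$ tail and the boundary corrections are of strictly lower order, and the dropped $+\,r{-}1$ in the linear factor contributes only $O(r^2/n)$ relatively. So both approaches work; the paper's buys a two-line proof at the cost of first proving the summation identity, while yours bypasses that identity at the cost of more delicate uniform error control.
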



\def\appendixidentity{
\section{Identity}
\label{sec:identity}


Using the two results of Section~\ref{sec:prob} and Appendix~\ref*{sec:gf} we can show the following identity in a
combinatorial way.

\addtocounter{equation}{-1}
\begin{corthm}\label{thm:identity}
  For $n\ge 0$, we have the four equal expressions
  \begin{subequations}
  \renewcommand{\theequation}{\roman{equation}}
  \begin{align}
    &\mathrel{\phantom{=}}
    \frac{4}{n+1}
    \sum_{0\leq k < \ell < \ceil{n/2}} \frac{\binom{n}{k}}{\binom{n}{\ell}}
    + \iverson*{$n$ even} \frac{1}{n+1}
    \left(\frac{2^n}{\binom{n}{n/2}} - 1\right) + 1
    \label{eq:id:double}\\
    &= \frac{2}{\floor{n/2}+1}
      \sum_{0\leq k < \ell \leq \floor{n/2}}
      \frac{\binom{2\floor{n/2}+1}{k}}{\binom{2\floor{n/2}+1}{\ell}} + 1
      \label{eq:id:double-simple}\\
    &= \frac{1}{n+1} \sum_{m=0}^{\floor{n/2}} \sum_{\ell=m}^{n-m}
    \frac{\binom{2m}{m} \binom{n-2m}{\ell-m}}{\binom{n}{\ell}}
    \label{eq:id:quicksort}\\
    &= \Hodd_{n+1}.
    \label{eq:id:single}
  \end{align}
  \end{subequations}
\end{corthm}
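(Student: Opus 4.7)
The plan is to show that each of \eqref{eq:id:double}, \eqref{eq:id:double-simple}, \eqref{eq:id:quicksort} equals \eqref{eq:id:single} $= \Hodd_{n+1}$. The equality \eqref{eq:id:double}~$=$~\eqref{eq:id:single} is essentially free and constitutes the ``Corollary'' side of the statement: Theorem~\ref{thm:paths-gf-zeros} expresses $\E{Z_n}$ as \eqref{eq:id:double}, while Theorem~\ref{thm:paths-prob} expresses $\E{Z_n}$ as $\Hodd_{n+1}$. Combining the two identifies \eqref{eq:id:double} with \eqref{eq:id:single}.

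For \eqref{eq:id:double-simple}~$=$~\eqref{eq:id:single}, I would recognise \eqref{eq:id:double-simple} as the formula of Theorem~\ref{thm:paths-gf-zeros} applied to the odd length $N := 2\floor{n/2}+1$. Since $N$ is always odd, the Iverson correction in \eqref{eq:id:double} vanishes, and substituting $\ceil{N/2} = \floor{n/2}+1$ and $N+1 = 2\floor{n/2}+2$ into the formula for $\E{Z_N}$ literally produces \eqref{eq:id:double-simple}. Hence \eqref{eq:id:double-simple} equals $\E{Z_N} = \Hodd_{N+1}$ by Theorem~\ref{thm:paths-prob}. Finally $\Hodd_{N+1} = \Hodd_{n+1}$: for odd $n$ we have $N+1 = n+1$ directly, and for even $n$ we have $N+1 = n+2$, but $\Hodd_{n+2} = \Hodd_{n+1}$ since the additional index $n+2$ is even and contributes $0$ to $\Hodd$.

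For \eqref{eq:id:quicksort}~$=$~\eqref{eq:id:single}, I would argue probabilistically using the model of Section~\ref{sec:description}. Since every path $P_n$ ends at $(n,0)$, a zero of $P_n$ must have the form $(n-2m,0)$ for some $0 \leq m \leq \floor{n/2}$, so $\E{Z_n} = \sum_{m=0}^{\floor{n/2}} \P{(n-2m,0) \in P_n}$. To evaluate this probability directly (rather than via Lemma~\ref{lem:prob-point-uniform}), factor a path from $(0,S)$ through $(n-2m,0)$ to $(n,0)$ at its intermediate zero: writing $\ell = (n-S)/2$ for the number of up-steps, the left piece has $\binom{n-2m}{\ell-m}$ realisations (via the symmetry $\binom{n-2m}{n-m-\ell} = \binom{n-2m}{\ell-m}$) and the right piece has $\binom{2m}{m}$, while the total number of paths from $(0,S)$ to $(n,0)$ is $\binom{n}{\ell}$. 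Weighting with probability $\tfrac{1}{n+1}$ for the uniform choice of $S$ and summing $\ell$ from $m$ to $n-m$ yields exactly the inner sum in \eqref{eq:id:quicksort}. Summing over $m$ and invoking $\E{Z_n} = \Hodd_{n+1}$ completes the argument.

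The delicate part is the re-indexing carried out for \eqref{eq:id:quicksort}: one must carefully align the substitution $\ell = (n-S)/2$ with binomial symmetry so that the natural factorisation $\binom{n-2m}{(n-2m+S)/2}\binom{2m}{m}$ turns into $\binom{n-2m}{\ell-m}\binom{2m}{m}$, and track that the range $|S|\leq n-2m$ corresponds precisely to $m \leq \ell \leq n-m$. Beyond this bookkeeping, the proof reduces to combining the two independent evaluations of $\E{Z_n}$ already available from the generating function and the probabilistic approaches.
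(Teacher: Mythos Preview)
Your proposal is correct and matches the paper's combinatorial proof essentially point for point: \eqref{eq:id:double}~$=$~\eqref{eq:id:single} by combining Theorems~\ref{thm:paths-gf-zeros} and~\ref{thm:paths-prob}; \eqref{eq:id:quicksort}~$=$~\eqref{eq:id:single} by factoring paths at the intermediate zero $(n-2m,0)$ exactly as the paper does. Your treatment of \eqref{eq:id:double-simple} (apply Theorem~\ref{thm:paths-gf-zeros} to the odd length $N=2\floor{n/2}+1$ and then observe $\Hodd_{N+1}=\Hodd_{n+1}$) is a slight rephrasing of the paper's observation that \eqref{eq:id:double} and \eqref{eq:id:double-simple} coincide for odd~$n$ while \eqref{eq:id:double-simple} and \eqref{eq:id:single} depend only on $\floor{n/2}$---same idea, just packaged a bit more explicitly.
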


We note that the expressions \eqref{eq:id:double} and
  \eqref{eq:id:double-simple} are obviously equal for odd $n$. Furthermore,
  \eqref{eq:id:double-simple} and
  \eqref{eq:id:single} only change when $n$ increases by $2$
  (to be precise from odd~$n$ to even~$n$).
Once we prove that \eqref{eq:id:double}
equals \eqref{eq:id:single} for all $n\ge 0$, it follows that both expressions
are equal to \eqref{eq:id:double-simple} for all $n\ge 0$.

\begin{proof}[(Combinatorial proof of Corollary~\ref{thm:identity})]
  Combine the results of Theorems~\ref{thm:paths-gf-zeros}
  and~\ref{thm:paths-prob} to see that \eqref{eq:id:double} and \eqref{eq:id:single}
  are equal.

  Expression~\eqref{eq:id:quicksort} for the expected number of zeros,
  but without taking the zero at $(n,0)$ into account,
  has been given in \cite[displayed
  equation after (14)]{AumullerD15}: The number of
  paths from $(0, n-2\ell)$ to $(n, 0)$ is $\binom{n}{\ell}$, whereas the number of
  paths from $(0, n-2\ell)$ via $(n-2m, 0)$ to $(n, 0)$ is
  $\binom{n-2m}{\ell-m}\binom{2m}{m}$. Summing over all possible $m$ and all
  possible $\ell$ and dividing by $(n+1)$ for the equidistribution of the
  starting point yields \eqref{eq:id:quicksort}.
\end{proof}

Beside this combinatorial proof, we intend to show Theorem~\ref{thm:identity}
in alternative ways. First, we prove that the identity between
\eqref{eq:id:quicksort} and \eqref{eq:id:single}.

\begin{proof}[of \eqref{eq:id:quicksort} equals \eqref{eq:id:single}]
  Consider
  \begin{align*}
    \frac{1}{n+1}\sum_{\ell=m}^{n-m}\frac{\binom{2m}{m}\binom{n-2m}{\ell-m}}{\binom{n}{\ell}}&=
    \frac{1}{n+1}\sum_{\ell=m}^{n-m}\frac{(2m)!\, (n-2m)!\, \ell!\, (n-\ell)!}{m!\, m!\, (\ell-m)!\, (n-\ell-m)!\, n!}\\
    &=\frac{1}{(n+1)\binom{n}{2m}}\sum_{\ell=m}^{n-m}\binom{\ell}{m}\binom{n-\ell}{m}\\
    &=\frac{1}{(n+1)\binom{n}{2m}} \binom{n+1}{2m+1}=\frac{1}{2m+1},
  \end{align*}
  where \cite[(5.26)]{Graham-Knuth-Patashnik:1994} has been used in the
  penultimate step with $\ell=n$, $q=0$, $k=\ell$, $m=m$ and $n=m$. Summing over
  $m$ yields $\sum_{m=0}^{\floor{n/2}} 1/(2m+1) = \Hodd_{n+1}$, thus
  the equality between \eqref{eq:id:single} and \eqref{eq:id:quicksort}.
\end{proof}

It remains to give a computational proof of the equality
between~\eqref{eq:id:double} and \eqref{eq:id:single}. We provide two proofs:
one motivated by ``creative telescoping'' (Appendix~\ref*{sec:creative-telescoping}) and one completely
elementary ``human'' proof (Appendix~\ref*{sec:human-proof}) using not more than Vandermonde's convolution.

\subsection{Proof of the Identity Using Creative Telescoping}
\label{sec:creative-telescoping}
 A computational proof of the identity between \eqref{eq:id:double} and \eqref{eq:id:single} can be generated
by the
summation package Sigma~\cite{Schneider:2015:Sigma-1.81} (see also
Schneider~\cite{Schneider:2007:symb-sum}) together with the packages
HarmonicSums~\cite{Ablinger:2015:HarmonicSums-1.0} and
EvaluateMultiSums~\cite{Schneider:2015:EvaluateMultiSums-0.96}\footnote{The authors thank Carsten Schneider for providing the packages
  Sigma~\cite{Schneider:2015:Sigma-1.81} and
  EvaluateMultiSums~\cite{Schneider:2015:EvaluateMultiSums-0.96}, and
  for his support.}.
To succeed, we have to split the case of even and odd $n$. The obtained proof
certificates are rather lengthy to verify.

Motivated by the previous observations, we also give a proof without additional
machinery. Anyhow, the key step is, as with Sigma, creative telescoping. We
prove an (easier) identity which Sigma comes up with in the following lemma.

\begin{lemma}
  Let
  \begin{align*}
    F(n, \ell)&=\sum_{0\le k<\ell}\frac{\binom{n}{k}}{\binom{n}{\ell}},\\
    G(n, \ell)&=(\ell-1)+ (\ell-1-n)F(n, \ell)
  \end{align*}
  for $0\le \ell\le n$.
  Then
  \begin{equation}\label{eq:CT:simple}
    (n+1)F(n+1, \ell)-(n+2)F(n, \ell)=G(n, \ell+1)- G(n, \ell)
  \end{equation}
  holds for all $0\le \ell< n$.
\end{lemma}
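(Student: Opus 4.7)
My plan is to verify the recurrence \eqref{eq:CT:simple} directly by reducing both sides to the same linear expression in $F(n, \ell)$. The proof will rest on two elementary recurrences for $F$, one shifting~$\ell$ by one and one shifting~$n$ by one.

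First I would establish
$F(n, \ell+1) = \frac{\ell+1}{n-\ell}\bigl(F(n, \ell) + 1\bigr)$.
This is immediate from the observation that the numerator sum of $F(n, \ell+1)$ is the numerator sum of $F(n, \ell)$ with the extra term $\binom{n}{\ell}$ added, combined with $\binom{n}{\ell+1} = \frac{n-\ell}{\ell+1}\binom{n}{\ell}$. Substituting this into
$G(n, \ell+1) - G(n, \ell) = 1 + (\ell-n)F(n, \ell+1) - (\ell-1-n)F(n, \ell)$
lets the factors $\ell - n$ and $n - \ell$ cancel; collecting terms I expect the right-hand side of \eqref{eq:CT:simple} to simplify to $(n - 2\ell)F(n, \ell) - \ell$.

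Second I would establish
$F(n+1, \ell) = \frac{2(n+1-\ell)}{n+1}\, F(n, \ell) - \frac{\ell}{n+1}$.
For this I would apply Pascal's identity $\binom{n+1}{k} = \binom{n}{k} + \binom{n}{k-1}$ to the numerator of $F(n+1, \ell)$ and shift the index in the second sum; the two copies of $\sum_{k<\ell}\binom{n}{k}$ combine and leave a single boundary term $-\binom{n}{\ell-1}$. This is then handled using $\binom{n+1}{\ell} = \frac{n+1}{n+1-\ell}\binom{n}{\ell}$ together with the ratio $\binom{n}{\ell-1}/\binom{n}{\ell} = \ell/(n+1-\ell)$. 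Plugging the resulting formula into the left-hand side $(n+1)F(n+1, \ell) - (n+2)F(n, \ell)$, the prefactor $n+1$ cancels cleanly, and after combining with $-(n+2)F(n, \ell)$ I again expect to land on $(n - 2\ell)F(n, \ell) - \ell$.

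With both sides of \eqref{eq:CT:simple} reduced to the same expression, the lemma follows. I do not foresee any real obstacle; the whole argument is a short bookkeeping computation built from two ratio identities for binomial coefficients. The only step that needs mild care is tracking the boundary term $\binom{n}{\ell-1}$ produced by the index shift in Pascal's identity, and verifying that the shift is valid for all $0 \le \ell < n$ (which is exactly the range in which the claimed identity is asserted).
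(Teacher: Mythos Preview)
Your proposal is correct and follows essentially the same route as the paper: both derive the two one-step recurrences $F(n,\ell+1)=\frac{\ell+1}{n-\ell}\bigl(1+F(n,\ell)\bigr)$ and $F(n+1,\ell)=-\frac{\ell}{n+1}+\frac{2(n+1-\ell)}{n+1}F(n,\ell)$ via Pascal's identity and the ratio of neighbouring binomial coefficients, and then substitute into \eqref{eq:CT:simple}. The only cosmetic difference is that you reduce each side separately to the common form $(n-2\ell)F(n,\ell)-\ell$, whereas the paper substitutes both recurrences into \eqref{eq:CT:simple} at once and observes that everything cancels.
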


\begin{proof}
  For $0\le \ell<n$, we first compute
  \begin{align}
    F(n+1, \ell)&=\frac{1}{\binom{n+1}{\ell}}\sum_{0\le k<\ell}\binom{n+1}{k}
    =\frac{1}{\binom{n+1}{\ell}}\sum_{0\le k<\ell}\biggl(\binom{n}{k-1}+\binom{n}{k}\biggr)\notag\\
    &=-\frac{\binom{n}{\ell-1}}{\binom{n+1}{\ell}}+\frac{2\binom{n}{\ell}}{\binom{n+1}{\ell}}F(n,
    \ell)
    =-\frac{\ell}{n+1}+2\frac{n+1-\ell}{n+1}F(n,
    \ell)\label{eq:CT:simple:10}
  \end{align}
  and
  \begin{align}
    F(n, \ell+1)&=\frac1{\binom{n}{\ell+1}}\sum_{0\le k<\ell+1}\binom{n}{k}
    =\frac{\binom{n}{\ell}}{\binom{n}{\ell+1}}+
    \frac{\binom{n}{\ell}}{\binom{n}{\ell+1}}F(n, \ell)\notag\\
    &=\frac{\ell+1}{n-\ell} + \frac{\ell+1}{n-\ell}F(n, \ell).\label{eq:CT:simple:01}
  \end{align}
  By plugging \eqref{eq:CT:simple:10} and \eqref{eq:CT:simple:01} into
  \eqref{eq:CT:simple}, all occurrences of $F(n, \ell)$ cancel as well as all
  other terms, which proves \eqref{eq:CT:simple}.
\end{proof}

We are now able to prove the essential recurrence for the sum \eqref{eq:id:double} in
Theorem~\ref{thm:identity}.

\begin{lemma}\label{lemma:CT:recursion-E}
  Let
  \begin{equation*}
    E_n = \frac{4}{n+1}
    \sum_{0\leq k < \ell < \ceil{n/2}} \frac{\binom{n}{k}}{\binom{n}{\ell}}
    + \iverson*{$n$ even} \frac{1}{n+1}
    \left(\frac{2^n}{\binom{n}{n/2}} - 1\right).
  \end{equation*}
  Then
  \begin{equation}\label{eq:CT:recursion-E}
    E_{2N+1}-E_{2N}=0
    \qquad\text{and}\qquad
    E_{2N+2}-E_{2N+1}=\frac1{2N+3}
  \end{equation}
  for $N\ge 0$.
\end{lemma}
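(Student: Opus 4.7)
The strategy is creative telescoping: apply \eqref{eq:CT:simple} with a suitable value of $n$, sum over $\ell$ in a range chosen so that the right-hand side collapses telescopically, and then identify the resulting left-hand side with $E_{n+1}-E_n$ up to boundary corrections. I would set things up by writing $E_n = \frac{4}{n+1} S_n + \text{(Iverson addend)}$, where $S_n = \sum_{\ell=0}^{\lceil n/2\rceil - 1} F(n,\ell)$, and treating the even and odd cases separately, since the upper summation index jumps by one only when $n$ increases from odd to even.

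For the first recurrence $E_{2N+1}-E_{2N}=0$, I would set $n=2N$ in \eqref{eq:CT:simple} and sum over $\ell=0,1,\ldots,N-1$. The right-hand side collapses to $G(2N,N)-G(2N,0)$. On the left, $\sum_{\ell=0}^{N-1} F(2N,\ell)$ is exactly $S_{2N}$, while $\sum_{\ell=0}^{N-1} F(2N+1,\ell)$ differs from $S_{2N+1}$ by the extra term $F(2N+1,N)$ coming from the enlarged range $\lceil(2N+1)/2\rceil = N+1$. After clearing denominators the claim reduces to verifying
\[
G(2N,N) - G(2N,0) + (2N+1)\,F(2N+1,N)
= \frac{2N+2}{4}\left(\frac{2^{2N}}{\binom{2N}{N}} - 1\right).
\]
Here $G(2N,0)=-1$ is immediate, and the central-binomial evaluations $F(2N,N) = \tfrac{1}{2}\bigl(2^{2N}/\binom{2N}{N}-1\bigr)$ and $F(2N+1,N) = 2^{2N}/\binom{2N+1}{N}-1$ (both consequences of the symmetry of Pascal's triangle across the middle column) reduce the verification to elementary algebra.

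For the second recurrence $E_{2N+2}-E_{2N+1}=\tfrac{1}{2N+3}$, I would set $n=2N+1$ and sum over $\ell=0,1,\ldots,N$. Now both $\sum_{\ell=0}^{N} F(2N+1,\ell) = S_{2N+1}$ and $\sum_{\ell=0}^{N} F(2N+2,\ell) = S_{2N+2}$ coincide exactly with the sums defining $E_{2N+1}$ and $E_{2N+2}$, so no correction term is needed; however, the Iverson addend now contributes $\tfrac{1}{2N+3}\bigl(2^{2N+2}/\binom{2N+2}{N+1}-1\bigr)$ to $E_{2N+2}$. The telescoped right-hand side is $G(2N+1,N+1)-G(2N+1,0)$, which after plugging in $F(2N+1,N+1) = 2^{2N+1}/\binom{2N+2}{N+1}$ (again by the Pascal symmetry) gives precisely the value needed to produce $\tfrac{1}{2N+3}$.

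The main obstacle is not any single deep step but rather the bookkeeping: one must keep careful track of which summation range corresponds to $E_n$ versus the range imposed by the telescoping, and of the even/odd asymmetry hidden inside the Iverson bracket. Once the boundary values $G(2N,0),\,G(2N,N),\,G(2N+1,0),\,G(2N+1,N+1)$ and the three special $F$-values are correctly computed, both recurrences drop out by straightforward simplification.
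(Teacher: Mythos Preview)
Your proposal is correct and follows essentially the same approach as the paper: sum the telescoping identity \eqref{eq:CT:simple} over $0\le\ell<\lceil n/2\rceil$, express the left-hand side via $S_n$ and $S_{n+1}$ (hence $E_n$ and $E_{n+1}$), and evaluate the boundary terms $G(n,0)$ and $G(n,\lceil n/2\rceil)$ together with the central-binomial values of $F$. The only cosmetic difference is that the paper keeps $n$ generic with Iverson brackets and uses the recursion \eqref{eq:CT:simple:10} to rewrite the stray term $F(n+1,n/2)$ before specializing to $n=2N$ and $n=2N+1$, whereas you split into the two parities from the outset and evaluate the three special $F$-values directly by Pascal symmetry; the resulting algebra is the same.
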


\begin{proof}
Multiplying \eqref{eq:CT:simple} with $4/((n+1)(n+2))$ and summing over $0\le
\ell < \ceil{n/2}$ yields
\begin{multline*}
  \frac{4}{n+2}\sum_{0\le k<\ell<
    \ceil{n/2}}\frac{\binom{n+1}{k}}{\binom{n+1}{\ell}}
  -\frac{4}{n+1}\sum_{0\le
    k<\ell<\ceil{n/2}}\frac{\binom{n}{k}}{\binom{n}{\ell}}\\
  =\frac{4}{(n+1)(n+2)}\Bigl(G(n, \ceil{n/2})-G(n, 0)\Bigr)
\end{multline*}
for $n\ge 0$. This is equivalent to
\begin{multline}\label{eq:CT:identity:3}
  \frac{4}{n+2}\sum_{0\le k<\ell<
    \ceil{(n+1)/2}}\frac{\binom{n+1}{k}}{\binom{n+1}{\ell}} - \frac{4[n\text{ even}]}{n+2}F(n+1,n/2)
  -\frac{4}{n+1}\sum_{0\le
    k<\ell<\ceil{n/2}}\frac{\binom{n}{k}}{\binom{n}{\ell}}\\
  =\frac{4}{(n+1)(n+2)}\Bigl(\ceil{n/2}-1+(\ceil{n/2}-1-n)F(n, \ceil{n/2})+1\Bigr).
\end{multline}
We rewrite the double sums in terms of $E_n$ and $E_{n+1}$, respectively, and
use \eqref{eq:mu_n_0}. We also replace
$F(n+1, n/2)$ by \eqref{eq:CT:simple:10}. Then \eqref{eq:CT:identity:3}
is equivalent to
\begin{multline}\label{eq:identity-expectations}
  E_{n+1}-\frac{2[n\text{ odd}]}{n+2}F(n+1, (n+1)/2)-E_n\\  +\frac{2n[n\text{ even}]}{(n+1)(n+2)}-\frac{2[n\text{ even}]}{n+1}F(n, n/2)\\
 =\frac{4}{(n+1)(n+2)}\Bigl(\ceil{n/2}-(\floor{n/2}+1)F(n, \ceil{n/2})\Bigr).
\end{multline}
If $n=2N+1$, equation~\eqref{eq:identity-expectations} is equivalent to
\begin{multline*}
  E_{2N+2}-E_{2N+1}-\frac{2}{2N+3}F(2N+2,N+1)\\
  =\frac{2}{2N+3}-\frac{2}{2N+3}F(2N+1, N+1).
\end{multline*}
Using \eqref{eq:CT:simple:10}, this is equivalent to the second recurrence in \eqref{eq:CT:recursion-E}.

If $n=2N$, then \eqref{eq:identity-expectations} is equivalent to the first
recurrence in \eqref{eq:CT:recursion-E}.
\end{proof}

\begin{proof}[(Computational proof of Theorem~\ref{thm:identity})]
  The definition of $E_0$ in Lemma~\ref{lemma:CT:recursion-E} implies that
  $E_0=0$. Thus \eqref{eq:id:double} and \eqref{eq:id:single}
  coincide for $n=0$. This can be extended to all $n\ge 0$ by induction on $n$
  and Lemma~\ref{lemma:CT:recursion-E}.
\end{proof}

\subsection{Proof of the Identity Using Vandermonde's Convolution}
\label{sec:human-proof}

\begingroup\newcommand{\m}{\ceil{\frac n2}}
We now provide a completely elementary ``human'' proof of the identity between
\eqref{eq:id:double} and \eqref{eq:id:single}.

We first prove an identity on binomial coefficients.

\begin{lemma}\label{lemma:binomial-identity} The identity
\begin{equation*}
\sum_{0\le k\le j}\binom {n}{k}
=\sum_{0\le k \le j}2^k\binom{n-k-1}{j-k}
\end{equation*}
holds for all non-negative integers $j<n$.
\end{lemma}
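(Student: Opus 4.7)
The plan is to collapse the right-hand side to the left-hand side with a single application of Vandermonde's convolution, in the form $\sum_{k}\binom{k}{a}\binom{N-k}{b}=\binom{N+1}{a+b+1}$ (equation (5.26) of Graham--Knuth--Patashnik, the same identity already invoked in the proof of Corollary~\ref{thm:identity} above). The first move is to expand $2^{k}=\sum_{i=0}^{k}\binom{k}{i}$ and interchange the order of summation, so that the right-hand side becomes
\[
\sum_{i=0}^{j}\sum_{k=i}^{j}\binom{k}{i}\binom{n-k-1}{j-k}.
\]

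To bring the inner sum into the shape required by Vandermonde, I would apply the symmetry $\binom{n-k-1}{j-k}=\binom{n-k-1}{n-j-1}$. This rewrite uses the hypothesis $j<n$ (ensuring $n-j-1\ge 0$) together with $k\le j$ (ensuring $n-k-1\ge j-k$), and crucially it removes the dependence of the lower index on the summation variable $k$. The cited identity, applied with $N=n-1$, $a=i$, $b=n-j-1$, then collapses the inner sum to $\binom{n}{n-j+i}=\binom{n}{j-i}$.

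Finally, the outer sum is $\sum_{i=0}^{j}\binom{n}{j-i}$, and the substitution $\ell=j-i$ turns it into $\sum_{\ell=0}^{j}\binom{n}{\ell}$, which is the left-hand side.

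The main (and essentially only) obstacle is spotting the symmetry rewrite $\binom{n-k-1}{j-k}=\binom{n-k-1}{n-j-1}$: without it, both indices of the second binomial coefficient depend on the summation variable $k$ and no standard form of Vandermonde applies directly. Once that rewrite is in place, everything else is mechanical and fits the theme of this subsection.
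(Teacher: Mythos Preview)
Your proof is correct and is essentially identical to the paper's own proof: the paper too expands $2^{k}$ via the binomial theorem, applies the symmetry $\binom{n-k-1}{j-k}=\binom{n-k-1}{n-1-j}$, evaluates the inner sum over $k$ by \cite[(5.26)]{Graham-Knuth-Patashnik:1994}, and then uses symmetry and the substitution $i\mapsto j-i$ to finish. Your explicit remark that the symmetry step is where the hypothesis $j<n$ enters is a nice touch that the paper leaves implicit.
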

\begin{proof}
We denote the right hand side by $\rho$. The binomial theorem and symmetry of the
binomial coefficient yield
\begin{align*}
\rho &=\sum_{0\le i\le  k \le j}\binom{k}{i}\binom{n-k-1}{n-1-j}.
\intertext{The sum over $k$ can be evaluated by \cite[(5.26)]{Graham-Knuth-Patashnik:1994}, resulting in}
\rho &=\sum_{0\le i \le j}\binom{n}{n+i-j}.
\intertext{Symmetry of the binomial coefficient and then replacing $i$ by $j-i$
lead to}
\rho &=\sum_{0\le i \le j}\binom{n}{j-i}=\sum_{0\le i \le j}\binom{n}{i}.
\end{align*}
\end{proof}

We are now able to 
establish a recurrence satisfied 
by the double sum in \eqref{eq:id:double}.

\begin{lemma}\label{lemma:recursion-binomial}For $n\ge 0$, let
  \begin{equation*}
    S_n=\sum_{0\le k< \ell< \m} \frac{\binom{n}{k}}{\binom{n}{\ell}}.
  \end{equation*}
  Then the recurrence
  \begin{equation*}
    S_n =  \frac{n+1}{n-1}S_{n-2}+\frac{n+1}{4n}
    + \iverson*{$n$ even}\Bigl(\frac{2^{n-2}}{n\binom{n}{n/2}} -
    \frac{1}{2(n-1)} - \frac{1}{4n} \Bigr)
  \end{equation*}
  holds for $n\ge 2$.
\end{lemma}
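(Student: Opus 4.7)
The plan is to first derive a clean single-sum representation for $S_n$ using Lemma~\ref{lemma:binomial-identity}, and then to establish the recurrence by comparing that representation with the analogous one for $S_{n-2}$.

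Starting from
\[
S_n=\sum_{\ell=1}^{\ceil{n/2}-1}\frac{1}{\binom{n}{\ell}}\sum_{k=0}^{\ell-1}\binom{n}{k},
\]
I would apply Lemma~\ref{lemma:binomial-identity} (with $j=\ell-1$) to rewrite the inner sum as $\sum_{k=0}^{\ell-1}2^k\binom{n-k-1}{\ell-k-1}$. After swapping the order of summation and using the elementary identity
\[
\frac{\binom{n-k-1}{\ell-k-1}}{\binom{n}{\ell}}=\frac{\binom{\ell}{k+1}}{\binom{n}{k+1}},
\]
the $\ell$-sum collapses by the hockey-stick identity to $\binom{\ceil{n/2}}{k+2}$, giving (after relabeling $j=k+1$)
\[
S_n=\sum_{j=1}^{\ceil{n/2}-1}\frac{2^{j-1}\binom{\ceil{n/2}}{j+1}}{\binom{n}{j}}.
\]

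Next I would derive the analogous formula for $S_{n-2}$ (which runs only up to $j=\ceil{n/2}-2$ since $\ceil{(n-2)/2}=\ceil{n/2}-1$) and compare term by term. Splitting $\binom{\ceil{n/2}}{j+1}=\binom{\ceil{n/2}-1}{j+1}+\binom{\ceil{n/2}-1}{j}$ by Pascal's rule, and converting $1/\binom{n}{j}$ to $1/\binom{n-2}{j}$ via $\binom{n-2}{j}/\binom{n}{j}=(n-j)(n-j-1)/(n(n-1))$, exposes a term proportional to $S_{n-2}$. The key algebraic identity that pins down the coefficient is
\[
(n-j)(n-j-1)-n(n+1)=-(j+1)(2n-j),
\]
which, after combining with the secondary Pascal piece $\binom{\ceil{n/2}-1}{j}$, simplifies to a clean polynomial in $j$ whose parity-dependent form is $jM$ when $n=2M-1$ and $M(j+2)$ when $n=2M$. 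This is exactly what is required for the coefficient $(n+1)/(n-1)$ to fall out.

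The leftover consists of the top-end boundary term $j=\ceil{n/2}-1$ of $S_n$ (where the $\binom{\ceil{n/2}-1}{\ceil{n/2}}$ piece vanishes but the $\binom{\ceil{n/2}-1}{\ceil{n/2}-1}$ piece does not) together with, in the even case only, a mismatched summand at $j=n/2-1$. A direct evaluation of $2^{\ceil{n/2}-2}/\binom{n}{\ceil{n/2}-1}$ contributes precisely $(n+1)/(4n)$ in both parities, and the additional even-$n$ summand unfolds into the expected $2^{n-2}/(n\binom{n}{n/2})-1/(2(n-1))-1/(4n)$. The main obstacle will be the careful bookkeeping: the upper summation limit shifts by one between $S_n$ and $S_{n-2}$, the ratio conversion $1/\binom{n}{j}\to 1/\binom{n-2}{j}$ introduces a quadratic polynomial in $j$ that must collapse exactly with the Pascal redistribution, and the two parity cases must be handled separately because $\ceil{n/2}$ behaves differently. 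Once the algebraic collapse above is in hand, the verification of the boundary terms is a short computation.
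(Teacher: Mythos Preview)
Your single-sum representation
\[
S_n=\sum_{j=1}^{\ceil{n/2}-1}\frac{2^{j-1}\binom{\ceil{n/2}}{j+1}}{\binom{n}{j}}
\]
is correct and is in fact equivalent to the paper's intermediate formula~\eqref{S_n_explicit}. The decomposition $(n-j)(n-j-1)=n(n+1)-(j+1)(2n-j)$ correctly isolates the piece $\frac{n+1}{n-1}S_{n-2}$, and the ``clean polynomial'' you found---$jM$ for $n=2M-1$ and $M(j+2)$ for $n=2M$---is also right.

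The gap is in the next step. After extracting $\frac{n+1}{n-1}S_{n-2}$, what remains is \emph{not} just the top-end boundary term. For odd $n=2M-1$, say, the residual is
\[
\frac{M}{n(n-1)}\sum_{j=1}^{M-2}2^{j-1}\,j\,\frac{\binom{M-1}{j}}{\binom{n-2}{j}}
\;+\;\frac{2^{M-2}}{\binom{n}{M-1}},
\]
and the first summand is a genuine $(M-2)$-term sum, not zero. Your claim that the boundary term alone equals $(n+1)/(4n)$ is false: for $n=5$ one has $2^{1}/\binom{5}{2}=1/5$, whereas $(n+1)/(4n)=3/10$; the missing $1/10$ is exactly the value of the residual sum. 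So you still owe an evaluation of that sum (and its even-$n$ analogue), which is of comparable difficulty to the original problem.

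The paper avoids this obstacle by working directly with \eqref{S_n_explicit} and performing a partial-fraction decomposition of the rational factor $\frac{(n-k)(n-k-1)}{(\ceil{n/2}-k-1)(k+1)}$ in $k$. This splits $S_n$ into three pieces: a constant-in-$k$ piece, a $1/(\ceil{n/2}-k-1)$ piece, and a $1/(k+1)$ piece. The last one is exactly $\frac{n+1}{n-1}(S_{n-2}+\frac12(\ceil{n/2}-1))$ via \eqref{S_n_explicit} with $n-2$ in place of $n$; the other two can be re-summed by a second application of Lemma~\ref{lemma:binomial-identity} into closed-form partial sums of $\binom{n-1}{k}$, from which the remaining constants drop out. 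That partial-fraction step is what your term-by-term comparison via Pascal is missing.
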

\begin{proof}
We replace the sum in the numerator of $S_n$ using Lemma~\ref{lemma:binomial-identity}
and obtain
\begin{align}
S_n&=\sum_{0\le k\le \ell< \m} \frac{\binom{n}{k}}{\binom{n}{\ell}}-\m\notag\\
&=\sum_{0\le k\le \ell< \m} \frac{2^k\binom{n-k-1}{\ell-k}}{\binom{n}{\ell}}-\m\notag\\
&=\sum_{0\le k\le \ell< \m} \frac{2^k(n-k-1)!\,k!}{n!}\binom{\ell}{k}\bigl((n-k)-(\ell-k)\bigr)-\m\notag\\
&=\sum_{0\le k\le \ell< \m} \frac{2^k(n-k)!\,k!}{n!}\binom{\ell}{k}
\notag\\&\qquad-\sum_{0\le k\le \ell< \m} \frac{2^k(n-k-1)!\,(k+1)!}{n!}\binom{\ell}{k+1}-\m.\notag
\intertext{The sum over $\ell$ can be evaluated using upper summation, see \cite[Table~174]{Graham-Knuth-Patashnik:1994}:}
S_n&=\sum_{0\le k< \m} \frac{2^k(n-k)!\,k!}{n!}\binom{\m}{k+1}
\notag\\&\qquad-\sum_{0\le k< \m} \frac{2^k(n-k-1)!\,(k+1)!}{n!}\binom{\m}{k+2}-\m.\notag
\intertext{Shifting the summation index in the second sum leads to}
S_n&=\sum_{0\le k< \m} \frac{2^k(n-k)!k!}{n!}\binom{\m}{k+1}
\notag\\&\qquad-\sum_{0\le k< \m} \frac{2^{k-1}(n-k)!k!}{n!}\binom{\m}{k+1}-\m+
\frac{1}{2}\m\notag\\
&=\frac{1}{2n!}\sum_{0\le k< \m} 2^k(n-k)!k!\binom{\m}{k+1}-\frac12\m\notag\\
&=\frac{\m!}{2n!}\sum_{0\le k< \m}
2^k\frac{(n-k)!}{(\m-k-1)!}\frac1{k+1}-\frac12\m\label{S_n_explicit}.
\end{align}
We intend to derive a recurrence linking $S_n$ with
$S_{n-2}$. Therefore, for $n\ge 2$, we rewrite $S_n$ as
\begin{align*}
  S_n&=\frac{\m!}{2n!}\sum_{0\le k< \m-1}
  2^k\frac{(n-k-2)!}{(\m-k-2)!}\frac{(n-k)(n-k-1)}{(\m-k-1)(k+1)} - \frac{\m}{2} +
  \frac{2^{\m-2}}{\binom{n}{\m-1}}.
\intertext{Partial fraction decomposition in $k$ yields}
  S_n&=\frac{\m!}{2n!}\sum_{0\le k< \m-1}
  2^k\frac{(n-k-2)!}{(\m-k-2)!}\Bigl( - 1+\frac{(n-\m)(n-\m+1)}{{(\m-k-1)} \m} + \frac{n(n+1)}{{(k +
        1)} \m}\Bigr)\\
  &\qquad- \frac{\m}{2} + \frac{2^{\m-2}}{\binom{n}{\m-1}}\\
  &=-\frac{\m!(n-\m)!}{2n!}\sum_{0\le k< \m-1}
  2^k\binom{n-k-2}{\m-k-2}\\
  &\qquad +\frac{(\m-1)!(n-\m+1)!}{2n!}\sum_{0\le k<
    \m-1}2^k\binom{n-k-2}{\m-k-1}\\
  &\qquad +\frac{(\m-1)!(n+1)n}{2n!}\sum_{0\le k< \m-1}
  2^k\frac{(n-k-2)!}{(\m-k-2)!}\frac{1}{{(k +
        1)}}\\
  &\qquad- \frac{\m}{2} + \frac{2^{\m-2}}{\binom{n}{\m-1}}.
\end{align*}
We again use Lemma~\ref{lemma:binomial-identity} for the first two summands and
\eqref{S_n_explicit} with $n$ replaced by $n-2$ for the third summand
to
obtain
\begin{align*}
  S_n&=-\frac{1}{2\binom{n}{\m}}\sum_{0\le k< \m-1}\binom{n-1}{k}
  +\frac{1}{2\binom{n}{\m-1}}\sum_{0\le k<
    \m}\binom{n-1}{k}\\
  &\qquad +\frac{n+1}{n-1}\Bigl(S_{n-2}+\frac{\m-1}2\Bigr)- \frac{\m}{2}.
\end{align*}
Adding another summand for $k=\m-1$ to the first sum leads to
\begin{align*}
  S_n&=\biggl(\frac{1}{2\binom{n}{\m-1}}-\frac{1}{2\binom{n}{\m}}\biggr)\sum_{0\le
    k< \m}\binom{n-1}{k}\\
   &\qquad
   +\frac{\binom{n-1}{\m-1}}{2\binom{n}{\m}}+\frac{n+1}{n-1}S_{n-2}+\frac{\m-1}{n-1}
   -\frac12\\
  &=\frac{n-2\m+1}{2\m\binom{n}{\m}}\sum_{0\le
    k< \m}\binom{n-1}{k}\\
   &\qquad +\frac{n+1}{n-1}S_{n-2}+\frac{\m}{2n}+\frac{\m-1}{n-1} -\frac12.
\end{align*}
If $n$ is odd, then $n-2\m+1=0$ so that
the first summand vanishes. The result follows in that case.

For even $n$, the remaining sum is $2^{n-2}$ and the result follows.
\end{proof}

\begin{proof}[(Computational proof of Theorem~\ref{thm:identity})]
  Denote the expression in~\eqref{eq:id:double} by $E_n$. From
  Lemma~\ref{lemma:recursion-binomial}, we obtain the recurrence
  \begin{equation*}
    E_n = E_{n-2}+\frac{1}{n+\iverson*{$n$ even}}
  \end{equation*}
  for $n\ge 2$. As $E_0=E_1=1$, this implies that $E_n=\Hodd_{n+1}$. Thus
  \eqref{eq:id:double} and \eqref{eq:id:single} coincide.
\end{proof}

\endgroup
}


\def\appendixasy{
\section{Asymptotic Aspects}
\label{sec:asymptotics}
\begin{lemma}\label{lem:harmonic-asy}
  We have
  \begin{subequations}
  \begin{align}
    H_n &= \log n + \gamma + \frac{1}{2n} - \frac{1}{12n^2}
    + \Oh[Big]{\frac{1}{n^4}},\label{eq:asymptotic-harmonic-number}\\
    \Hodd_n &= \frac12 \log n
    + \frac{\gamma + \log 2}{2} + \frac{\iverson*{$n$ odd}}{2n}
    + \frac{3\iverson*{$n$ even}-2}{12n^2}
    + \Oh[Big]{\frac{1}{n^4}},\label{eq:asymptotic-odd-harmonic-number}\\
    \Halt_n &= -\log 2 + \frac{(-1)^n}{2n}
    - \frac{(-1)^n}{4n^2}
    + \Oh[Big]{\frac{1}{n^4}}\label{eq:asymptotic-alternating-harmonic-number}
  \end{align}
  \end{subequations}
  as $n$ tends to $\infty$, and where $\gamma = 0.5772156649\!\dots$ is the
  Euler--Mascheroni constant.
\end{lemma}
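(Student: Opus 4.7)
The plan is to derive all three expansions from the classical asymptotic \eqref{eq:asymptotic-harmonic-number} for $H_n$ together with the elementary identities $\Hodd_n = H_n - \tfrac{1}{2} H_{\floor{n/2}}$ and $\Halt_n = H_n - 2\Hodd_n$ noted in Section~\ref{sec:results}. The first expansion \eqref{eq:asymptotic-harmonic-number} itself is standard: it follows from Euler--Maclaurin summation applied to $\sum_{m=1}^{n}1/m$, or equivalently from the well-known expansion $\psi(n+1) = \log n + 1/(2n) - 1/(12n^{2}) + \Oh{1/n^{4}}$ of the digamma function together with $H_n = \psi(n+1) + \gamma$.

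For \eqref{eq:asymptotic-odd-harmonic-number} I would treat the two parities of $n$ separately. When $n$ is even, $\floor{n/2} = n/2$, and substituting \eqref{eq:asymptotic-harmonic-number} with $n$ replaced by $n/2$ into the identity $\Hodd_n = H_n - \tfrac{1}{2}H_{n/2}$ yields the claim in one step once like powers of $1/n$ are collected; in particular the coefficient $1/(12n^{2}) = (3 \cdot 1 - 2)/(12 n^{2})$ appears directly. When $n$ is odd, $\floor{n/2} = (n-1)/2$, and I would expand each of $\log((n-1)/2)$, $1/(n-1)$, and $1/(n-1)^{2}$ as a Taylor series in $1/n$ up to the required order, substitute into $\Hodd_n = H_n - \tfrac{1}{2}H_{(n-1)/2}$, and simplify. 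The main obstacle is the bookkeeping in the odd case: because the shift $n \mapsto (n-1)/2$ generates corrections at several orders from each of the three pieces, one must retain both the $1/n^{2}$ and $1/n^{3}$ contributions from every expansion in order to see the coefficients $1/(2n)$ and $-1/(6n^{2}) = (3\cdot 0 - 2)/(12n^{2})$ emerge correctly, and a small arithmetic slip is easy to miss.

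Finally, \eqref{eq:asymptotic-alternating-harmonic-number} follows by direct substitution of \eqref{eq:asymptotic-harmonic-number} and \eqref{eq:asymptotic-odd-harmonic-number} into $\Halt_n = H_n - 2\Hodd_n$. The $\log n$ terms cancel, the constant becomes $\gamma - (\gamma + \log 2) = -\log 2$, the coefficient of $1/n$ simplifies to $\tfrac{1}{2} - \iverson*{$n$ odd} = (-1)^{n}/2$, and a short parity check gives $-1/(12) - (3\iverson*{$n$ even} - 2)/6 = -(-1)^{n}/4$ for the $1/n^{2}$ coefficient. This last step is essentially a verification once the previous two expansions are in hand.
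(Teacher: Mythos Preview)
Your proposal is correct and follows essentially the same approach as the paper: both start from the classical expansion \eqref{eq:asymptotic-harmonic-number} and derive the other two via the identities $\Hodd_n = H_n - \tfrac12 H_{\floor{n/2}}$ and $\Halt_n = H_n - 2\Hodd_n$. The only cosmetic difference is that the paper handles both parities at once by writing $\floor{n/2} = (n-1+\alpha_n)/2$ with $\alpha_n = \iverson*{$n$ even}$ and simplifying via $\alpha_n^2 = \alpha_n$ (delegating the arithmetic to SageMath), whereas you split into the even and odd cases and expand by hand.
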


\begin{proof}
  The asymptotic expansion~\eqref{eq:asymptotic-harmonic-number} is well-known,
  cf.\@ for instance \cite[(9.88)]{Graham-Knuth-Patashnik:1994}.

  We write $\alpha_n=\iverson*{$n$ even}$ and thus
  $\floor{n/2}=(n-1+\alpha_n)/2$. As $\alpha_n$ is obviously bounded, we can
  simply plugin this expression into the asymptotic expansions and simplify all
  occurring higher powers of $\alpha_n$ by the fact that $\alpha_n^2=\alpha_n$. Using the relations
  $\Hodd_n=H_n- H_{\floor{n/2}}/2$ and $\Halt_n=H_n - 2\Hodd_n$ leads to the
  expansions~\eqref{eq:asymptotic-odd-harmonic-number} and
  \eqref{eq:asymptotic-alternating-harmonic-number}, respectively.

  The actual asymptotic computations\footnote{A worksheet containing
    the computations can be found at
    \url{http://www.danielkrenn.at/downloads/quicksort-paths/quicksort-paths.ipynb}.}
  have been carried out using the asymptotic
  expansions module \cite{Hackl-Krenn:2015:asy-sage} of
  SageMath~\cite{SageMath:2016:7.0}.
\end{proof}

\begin{corollary}\label{cor:main-asy}
  The expected number of zeros is
  \begin{multline*}
    \E{Z_n} =
    \frac12 \log n + \frac{\gamma+\log2}{2} \\
    + \frac{1+\iverson*{$n$ even}}{2n}
    - \frac{2+9\iverson*{$n$ even}}{12n^2}
    + \frac{\iverson*{$n$ even}}{n^3}
    + \Oh[Big]{\frac{1}{n^4}}
  \end{multline*}
  asymptotically as $n$ tends to infinity.
\end{corollary}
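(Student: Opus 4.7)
The plan is to combine Theorem~\ref{thm:paths-prob}, which gives the exact formula $\E{Z_n}=\Hodd_{n+1}$, with the asymptotic expansion~\eqref{eq:asymptotic-odd-harmonic-number} of $\Hodd_n$ from Lemma~\ref{lem:harmonic-asy}. The only real work is converting an expansion in powers of $1/(n+1)$ with parity indicator $\iverson*{$n+1$ odd}$ into the advertised expansion in powers of $1/n$ with indicator $\iverson*{$n$ even}$.

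First, I would apply~\eqref{eq:asymptotic-odd-harmonic-number} with $n$ replaced by $n+1$, observing that $\iverson*{$n+1$ odd}=\iverson*{$n$ even}$ and $\iverson*{$n+1$ even}=\iverson*{$n$ odd}=1-\iverson*{$n$ even}$. This gives an expansion of $\Hodd_{n+1}$ with $\log(n+1)$ as the leading term and powers of $1/(n+1)$ thereafter, whose coefficients are linear in $\iverson*{$n$ even}$.

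Next, I would expand each nonconstant piece as a series in $1/n$ using the standard expansions
\begin{equation*}
\log(n+1)=\log n+\frac{1}{n}-\frac{1}{2n^2}+\frac{1}{3n^3}+\Oh{1/n^4},\qquad
\frac{1}{(n+1)^k}=\sum_{j\ge 0}\binom{-k}{j}\frac{1}{n^{k+j}},
\end{equation*}
truncating everything at order $1/n^4$. Collecting terms of like power of $1/n$ and simplifying using the identity $\iverson*{$n$ even}^2=\iverson*{$n$ even}$ (so that any product of parity indicators reduces to a single one) should produce the claimed coefficients $(1+\iverson*{$n$ even})/2$, $-(2+9\iverson*{$n$ even})/12$, and $\iverson*{$n$ even}$ on the powers $1/n$, $1/n^2$, and $1/n^3$, respectively.

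The step most prone to error is the bookkeeping of the constant and the parity-dependent terms: the contribution from $\log(n+1)$ gives a universal $1/n$ shift, while the contribution from $\iverson*{$n+1$ odd}/(2(n+1))=\iverson*{$n$ even}/(2(n+1))$ gives a parity-dependent shift, and both must be re-expanded and merged carefully. Since the main statement only asks for terms up to $\Oh{1/n^4}$, this is a finite and mechanical calculation; as noted in the proof of Lemma~\ref{lem:harmonic-asy}, it can be (and has been) verified using the asymptotic expansions module of SageMath.
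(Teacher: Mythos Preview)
Your proposal is correct and follows exactly the paper's approach: the paper's proof of Corollary~\ref{cor:main-asy} is the single line ``Combine Theorem~\ref{thm:paths-prob} and Lemma~\ref{lem:harmonic-asy},'' and you have simply spelled out the mechanical re-expansion of $\Hodd_{n+1}$ in powers of $1/n$ that this combination entails.
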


\begin{proof}
  Combine Theorem~\ref{thm:paths-prob} and Lemma~\ref{lem:harmonic-asy}.
\end{proof}
}


\def\appendixdistribution{
\section{Distribution}
\label{sec:distribution}

In this part of this article, we study the distribution of the number
of zeros. As for the expectation $\E{Z_n}$, we get again an exact
formula, as well as an asymptotic formula. We start with the former, which is
stated in the following theorem; the latter is stated directly afterwards.

\begin{theorem}\label{thm:distribution}
  Let $r\in\N_0$. For positive lengths~$n\geq 2r-2$, the probability that a
  randomly chosen path~$P_n$ has exactly~$r$ zeros is
  \begin{align*}
    \P{Z_n = r} &= \frac{2^r}{n+1}
    \frac{\binom{\ceil{n/2}}{r}}{\binom{n}{r}}
    \bigg( \frac{2\ceil{n/2}}{r(r+1)} + \frac{r-1}{r+1} +
        \iverson*{$n$ even} \frac{1}{r} \bigg) \\
    &\phantom{=}\;+ \iverson*{$n$ even}
    \frac{2^{r-1}(r-1)}{(n+1)r} \frac{\binom{n/2}{r-1}}{\binom{n}{r}}
  \end{align*}
  and we have $\P{Z_0 = r} = \iverson{r=1}$. Otherwise $\P{Z_n = r} = 0$.
\end{theorem}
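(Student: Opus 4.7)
The plan is to work inside the generating function setup of Appendix~\ref*{sec:gf}. Because the symbolic decomposition there marks every zero of a path except the final one at $(n,0)$, the coefficient $[z^n u^{r-1}]Q_s(z,u)$ is precisely the number of paths of length $n$ starting at $(0,s)$ and ending at $(n,0)$ with exactly $r$ zeros. Averaging over the $n+1$ starting heights and normalising by the total number of paths $[z^n]Q_s(z,1)=\binom{n}{\ell}$ (Lemma~\ref{lem:coeffs-zeros}, with $\ell=(n-\abs{s})/2$) gives
\[
\P{Z_n=r}\;=\;\frac{1}{n+1}\sum_{s}\frac{[z^n][u^{r-1}]Q_s(z,u)}{\binom{n}{\ell}}.
\]

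The first ingredient is the closed-form extraction of the two coefficients. Starting from the simple form of Lemma~\ref{lem:transform-gf-zeros}, the partial fraction
\[
\frac{1}{1-v^2(2u-1)}\;=\;\frac{1}{1+v^2}\cdot\frac{1}{1-\frac{2v^2}{1+v^2}u}
\]
instantly yields $[u^{r-1}]Q_s(z,u)=2^{r-1}v^{\abs{s}+2r-2}/(1+v^2)^{r-1}$, and the transfer rule of Lemma~\ref{lem:extract-coeffs-worlds} then gives
\[
[z^n][u^{r-1}]Q_s(z,u)\;=\;2^{r-1}\left(\binom{n-r}{\ell-r+1}-\binom{n-r}{\ell-r}\right).
\]
Dividing by $\binom{n}{\ell}$ and using the two elementary identities $\binom{n-r}{\ell-r}/\binom{n}{\ell}=\binom{\ell}{r}/\binom{n}{r}$ and $\binom{n-r}{\ell-r+1}/\binom{n}{\ell}=\binom{\ell}{r-1}(n-\ell)/(r\binom{n}{r})$, together with $r\binom{\ell}{r}=(\ell-r+1)\binom{\ell}{r-1}$, the conditional probability of $r$ zeros given the start $s$ collapses to $2^{r-1}\binom{\ell}{r-1}(n-2\ell+r-1)/(r\binom{n}{r})$.

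It remains to sum over $s$. Each value of $\ell\in\{0,1,\dots,\ceil{n/2}-1\}$ corresponds to two starting heights $s=\pm(n-2\ell)$, while $\ell=n/2$ arises only for even $n$ and accounts for the single start $s=0$. Writing $N=\ceil{n/2}$, everything reduces to
\[
T_n\;:=\;\sum_{\ell=r-1}^{N-1}\binom{\ell}{r-1}(n+r-1-2\ell),
\]
which telescopes via $\sum_\ell\binom{\ell}{r-1}=\binom{N}{r}$ and $\ell\binom{\ell}{r-1}=r\binom{\ell+1}{r}-\binom{\ell}{r-1}$ into $T_n=(n+r+1)\binom{N}{r}-2r\binom{N+1}{r+1}$. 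I expect the main obstacle to be the final parity bookkeeping: after adding the boundary contribution $(r-1)\binom{n/2}{r-1}$ from $\ell=n/2$ (when $n$ is even), factoring out $\binom{N}{r}/\binom{n}{r}$, and rewriting $(n+r+1)(r+1)-2r(N+1)$ using $n-2N=-\iverson*{$n$ odd}$, one must recover exactly the three summands $2N/(r(r+1))$, $(r-1)/(r+1)$ and $\iverson*{$n$ even}/r$ in the statement. Degenerate cases ($n=0$, or $r>\floor{n/2}+1$) drop out automatically because then the range $r-1\le\ell<N$ is empty. Apart from this bookkeeping, the computation closely parallels the proof of Theorem~\ref{thm:paths-gf-zeros}.
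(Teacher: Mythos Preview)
Your proposal is correct and follows essentially the same route as the paper: extract $[u^{r-1}]Q_s(z,u)$ from Lemma~\ref{lem:transform-gf-zeros}, push it through Lemma~\ref{lem:extract-coeffs-worlds} to obtain the difference of two binomials, normalise by $\binom{n}{\ell}$, and then evaluate the resulting $\ell$-sum via upper summation and the identity $(\ell+1)\binom{\ell}{r-1}=r\binom{\ell+1}{r}$. The paper separates this last step as its own Lemma~\ref{lem:distribtion-sum} and carries the factorials a bit longer before passing to $\binom{\ell}{r-1}$, but the computation and the parity bookkeeping using $n-2\ceil{n/2}=\iverson*{$n$ even}-1$ are identical to yours.
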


This exact formula admits a local limit theorem towards a
discrete distribution. The details are as follows.

\begin{corollary}\label{cor:distribution-asy}
  Let $0<\eps\leq\frac12$. For positive integers $r$ with $r =
  \Oh{n^{1/2-\eps}}$, we have asymptotically
  \begin{equation*}
    \P{Z_n = r} = \frac{1}{r(r+1)} \left(1 + \Oh{1/n^{2\eps}}\right)
  \end{equation*}
  as $n$ tends to infinity.
\end{corollary}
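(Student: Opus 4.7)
The plan is to derive the asymptotic directly from the exact formula in Theorem~\ref{thm:distribution}. The crucial ingredient is the ratio estimate
\begin{equation*}
2^r \frac{\binom{\ceil{n/2}}{r}}{\binom{n}{r}} = 1 + \Oh{1/n^{2\eps}}
\end{equation*}
uniformly for $r = \Oh{n^{1/2-\eps}}$. Writing $\ceil{n/2} = n/2 + \delta$ with $\delta\in\{0,1/2\}$, one has
\begin{equation*}
2^r \frac{\binom{\ceil{n/2}}{r}}{\binom{n}{r}} = \prod_{i=0}^{r-1}\left(1 + \frac{2\delta - i}{n-i}\right),
\end{equation*}
and taking logarithms together with a Taylor expansion of $\log(1+x)$ reduces the exponent to $\Oh{r^2/n}+\Oh{r^3/n^2}$. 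Under the stated growth condition on $r$, both error terms are $\Oh{1/n^{2\eps}}$.

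With that estimate in hand, I would isolate the dominant piece of the exact formula, namely
\begin{equation*}
\frac{2^r}{n+1}\cdot\frac{\binom{\ceil{n/2}}{r}}{\binom{n}{r}}\cdot\frac{2\ceil{n/2}}{r(r+1)} = \frac{2\ceil{n/2}}{n+1}\cdot\frac{1}{r(r+1)}\cdot\bigl(1+\Oh{1/n^{2\eps}}\bigr).
\end{equation*}
Since $2\ceil{n/2}/(n+1) = 1 + \Oh{1/n}$ in both parities of $n$, this piece already produces the claimed $\frac{1}{r(r+1)}(1+\Oh{1/n^{2\eps}})$. For the remaining contributions, the other bracketed summand $\frac{2^r}{n+1}\frac{\binom{\ceil{n/2}}{r}}{\binom{n}{r}}\bigl(\frac{r-1}{r+1}+\iverson*{$n$ even}\frac{1}{r}\bigr)$ is $\Oh{1/n}$ by the ratio estimate, contributing a relative error of $\Oh{r^2/n}=\Oh{1/n^{2\eps}}$ with respect to $1/(r(r+1))$. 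For the trailing standalone term, I would rewrite $\binom{n/2}{r-1}/\binom{n}{r}$ as $(r/(n/2-r+1))\binom{n/2}{r}/\binom{n}{r}$ so that it becomes $\Oh{r/n^2}$, with relative contribution $\Oh{r^3/n^2}=\Oh{1/n^{2\eps}}$ throughout the admissible range.

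The main obstacle is securing the first-step estimate \emph{uniformly} in $r$ as $r$ grows with $n$. The hypothesis $r=\Oh{n^{1/2-\eps}}$ is used precisely to keep the compounded Taylor expansion under control; any weaker restriction would let the second-order terms dominate and degrade the error beyond $\Oh{1/n^{2\eps}}$. Once the uniform ratio estimate is in place, the remaining bookkeeping is straightforward.
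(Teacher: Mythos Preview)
Your proposal is correct and follows essentially the same route as the paper: start from the exact formula of Theorem~\ref{thm:distribution}, show that the binomial ratio is $1+\Oh{1/n^{2\eps}}$ under $r=\Oh{n^{1/2-\eps}}$, extract the leading $\frac{1}{r(r+1)}$ term, and check that the remaining summands are of lower order. The only cosmetic difference is that the paper isolates the estimate $c!\,\binom{N}{c}=N^c(1+\Oh{1/N^{2\eps}})$ as a standalone lemma and applies it to numerator and denominator separately, whereas you work directly with the product form of the ratio $2^r\binom{\ceil{n/2}}{r}/\binom{n}{r}$.
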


\begin{proof}[of Theorem~\ref{thm:distribution}]
  Again, we assume $s\geq0$ (by symmetry of the generating
  function~\eqref{eq:path-gf-zeros}). Note that $Q_s$ counts the
  number of zeros by the variable~$u$ except for the last zero (at
  $(n,0)$). By starting with Lemma~\ref{lem:transform-gf-zeros} and
  some rewriting, we can extract the $(r-1)$st coefficient with
  respect to $u$ as
  \begin{equation*}
    [u^{r-1}] Q_s(z,u)
    = [u^{r-1}] \frac{v^s}{1-u\frac{2v^2}{1+v^2}}
    = \frac{2^{r-1} v^{2(r-1)+s}}{(1+v^2)^{r-1}}.
  \end{equation*}
  Next, we extract the coefficient of $z^n$. We use
  Lemma~\ref{lem:extract-coeffs-worlds} to obtain
  \begin{align*}
    [z^n u^{r-1}] Q_s(z,u)
    &= [v^n] (1-v^2) (1+v^2)^{n-1} \frac{2^{r-1} v^{2(r-1)+s}}{(1+v^2)^{r-1}}\\
    &=2^{r-1}\,[v^{n-s-2(r-1)}] (1-v^2) (1+v^2)^{n-r}.
  \end{align*}
  We set $\ell=\frac12(n-s)$ and get
  \begin{align*}
    [z^n u^{r-1}] Q_s(z,u)
    &= 2^{r-1}\,[v^{2\ell-2r+2}] (1-v^2) (1+v^2)^{n-r} \\
    &= 2^{r-1}\,[v^{\ell-r+1}] (1-v) (1+v)^{n-r} \\
    &= 2^{r-1} \binom{n-r}{\ell-r+1} - 2^{r-1} \binom{n-r}{\ell-r} \\
    &= 2^{r-1} \binom{n-r}{n-\ell-1} - 2^{r-1} \binom{n-r}{n-\ell}.
  \end{align*}
  Note that we have to assume $n-r\geq0$ to make this work. Otherwise,
  anyhow, there are no paths with exactly $r$ zeros (and positive
  length~$n$).

  If $\ell>r-1$ we can rewrite the previous formula to obtain
  \begin{equation*}
    [z^n u^{r-1}] Q_s(z,u)
    = 2^{r-1} \binom{n-r}{n-\ell} \left(\frac{n-\ell}{\ell-r+1} - 1\right)
    = 2^{r-1} \frac{s+r-1}{\ell-r+1} \binom{n-r}{n-\ell},
  \end{equation*}
  if $\ell=r-1$, then we have $[z^n u^{r-1}] Q_s(z,u) = 2^{r-1}$ (independently of~$n$),
  and if $\ell<r-1$ we get $[z^n u^{r-1}] Q_s(z,u) = 0$.

  To finish the proof, we have to normalize this number of paths with exactly $r$
  zeros and then sum up over all $\ell$. So let us start with the
  normalization part. We set
  \begin{equation*}
    \lambda_{n,r,s} = \P{Z_n = r \mid
      \text{$P_n$ starts in $(0,s)$}}
    = \frac{[z^n u^{r-1}] Q_s(z,u)}{[z^n] Q_s(z,1)}
  \end{equation*}
  for $n\equiv s\pmod 2$ and $\lambda_{n,r,s} = 0$ otherwise. The denominator
  $[z^n] Q_s(z,1)$ was already determined in Lemma~\ref{lem:coeffs-zeros}.

  If $\ell>r-1$, we have
  \begin{align*}
    \lambda_{n,r,s}
    &= 2^{r-1} \frac{s+r-1}{\ell-r+1} \binom{n-r}{n-\ell} \Big/ \binom{n}{\ell} \\
    &= 2^{r-1} \frac{s+r-1}{\ell-r+1} \frac{(n-r)!\,\ell!\,(n-\ell)!}{
      (n-\ell)!\,(\ell-r)!\,n!} \\
    &= 2^{r-1} \frac{(n-r)!}{n!} \frac{\ell!\,(s+r-1)}{(\ell-r+1)!},
  \end{align*}
  where the last line magically holds for $\ell=r-1$ as well. In particular, we
  obtain
  \begin{equation*}
    \lambda_{n,r,0}
    = 2^{r-1} (r-1) \iverson{n \geq 2r-2} \frac{(n/2)!}{n!} \frac{(n-r)!}{(n/2-r+1)!}
    = \frac{2^{r-1} (r-1)}{r} \binom{n/2}{r-1} \Big/ \binom{n}{r}.
  \end{equation*}

  We have arrived at the summation of the $\lambda_{n,r,s}$. The result
  follows as
  \begin{align*}
    \P{Z_n = r}
    &= \sum_{s=-n}^n \P{Z_n = r \mid \text{$P_n$ starts in $(0,s)$}}
    \P{\text{$P_n$ starts in $(0,s)$}} \\
    &= \frac{1}{n+1} \sum_{s=-n}^n \lambda_{n,r,s} \\
    &= \frac{2}{n+1} \sum_{\ell=0}^{\ceil{n/2}-1} \lambda_{n,r,n-2\ell}
    + \frac{1}{n+1} \lambda_{n,r,0},
  \end{align*}
  and plugging in $\lambda_{n,r,s}$ gives the intermediate result
  \begin{equation}\label{eq:distribution-intermediate}
  \begin{split}
    \P{Z_n = r}
    &= 2^{r} \frac{(n-r)!}{(n+1)!} \sum_{\ell=r-1}^{\ceil{n/2}-1}
     \frac{\ell!\,(n-2\ell+r-1)}{(\ell-r+1)!} \\
    &\phantom{=}\;+ \iverson*{$n$ even}
    \frac{2^{r-1}(r-1)}{(n+1)r} \binom{n/2}{r-1} \Big/ \binom{n}{r}
  \end{split}
  \end{equation}
  for $n\geq r$. At this point, we interrupt this proof to evaluate the sum
  over the~$\ell$ and continue afterwards.
\end{proof}

\begin{lemma}\label{lem:distribtion-sum}
  We have
  \begin{equation*}
     \sum_{\ell=r-1}^{\ceil{n/2}-1}
     \frac{\ell!\,(n-2\ell+r-1)}{(\ell-r+1)!}
     = r!\, \binom{\ceil{n/2}}{r}
     \bigg( \frac{2\ceil{\frac{n}{2}}}{r(r+1)} + \frac{r-1}{r+1} +
       \iverson*{$n$ even} \frac{1}{r} \bigg).
  \end{equation*}
\end{lemma}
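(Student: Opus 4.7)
Let me abbreviate $m=\ceil{n/2}$. The plan is to split the linear factor $n-2\ell+r-1$ as $(n+r-1)-2\ell$, so that the sum reduces to evaluating the two ``moment sums''
\begin{equation*}
A=\sum_{\ell=r-1}^{m-1}\frac{\ell!}{(\ell-r+1)!},\qquad
B=\sum_{\ell=r-1}^{m-1}\frac{\ell!\,\ell}{(\ell-r+1)!},
\end{equation*}
so that the left-hand side equals $(n+r-1)A-2B$.

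The first step is to rewrite the summands as binomial coefficients via $\frac{\ell!}{(\ell-r+1)!}=(r-1)!\binom{\ell}{r-1}$, which turns $A$ into $(r-1)!\sum_{\ell=r-1}^{m-1}\binom{\ell}{r-1}$. The hockey stick identity gives $A=(r-1)!\binom{m}{r}$ immediately. For $B$, I would use the absorption-style identity $(\ell+1)\binom{\ell}{r-1}=r\binom{\ell+1}{r}$, which after subtracting one copy of $\binom{\ell}{r-1}$ splits $\ell\binom{\ell}{r-1}=r\binom{\ell+1}{r}-\binom{\ell}{r-1}$. Applying hockey stick to each piece then yields $B=(r-1)!\bigl[r\binom{m+1}{r+1}-\binom{m}{r}\bigr]$.

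Now the plan is to assemble $(n+r-1)A-2B$ and clean up. Using $\binom{m+1}{r+1}=\frac{m+1}{r+1}\binom{m}{r}$ factors out $(r-1)!\binom{m}{r}/(r+1)$ uniformly, and the claimed right-hand side factors similarly as $\frac{(r-1)!\binom{m}{r}}{r+1}\bigl[2m+r(r-1)+(r+1)\iverson*{$n$ even}\bigr]$ after combining the three fractions over a common denominator $r(r+1)$. So the whole identity boils down to the scalar equality
\begin{equation*}
(n+r+1)(r+1)-2r(m+1)-2r = 2m+r(r-1)+(r+1)\iverson*{$n$ even},
\end{equation*}
which after routine cancellation is equivalent to $(r+1)(n+1-2m)=(r+1)\iverson*{$n$ even}$.

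The only slightly subtle point is the disappearance of the Iverson bracket, which is where the case analysis on the parity of $n$ enters. Since $m=\ceil{n/2}$, we have $n+1-2m=1$ when $n$ is even and $n+1-2m=0$ when $n$ is odd; both cases match $\iverson*{$n$ even}$ exactly, and the identity follows. I expect this parity bookkeeping to be the only real obstacle; the rest is a direct application of hockey stick and absorption identities for binomial coefficients.
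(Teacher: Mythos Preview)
Your approach is correct and essentially identical to the paper's: both rewrite $\frac{\ell!}{(\ell-r+1)!}=(r-1)!\binom{\ell}{r-1}$, split the linear factor, apply upper summation (hockey stick) together with the absorption identity $(\ell+1)\binom{\ell}{r-1}=r\binom{\ell+1}{r}$, and finish by the parity observation $n+1-2\lceil n/2\rceil=\iverson*{$n$ even}$. One minor slip: in your displayed scalar equality the extra $-2r$ should not be there---after factoring out $(r-1)!\binom{m}{r}/(r+1)$ the left side is $(n+r+1)(r+1)-2r(m+1)$, nothing more---but your final reduction to $(r+1)(n+1-2m)=(r+1)\iverson*{$n$ even}$ is the correct one, so the slip is inconsequential.
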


Of course, this lemma can be proved computationally by, for example,
Sigma~\cite{Schneider:2015:Sigma-1.81}. However, we give a direct proof here.

\begin{proof}
  We obtain
  \begin{align*}
     \sum_{\ell=r-1}^{\ceil{n/2}-1}
     \frac{\ell!\,(n-2\ell+r-1)}{(\ell-r+1)!} &=
     (r-1)! \sum_{\ell=r-1}^{\ceil{n/2}-1}
     \binom{\ell}{r-1}(n-2\ell+r-1)\\
     &=(r-1)! \sum_{\ell=r-1}^{\ceil{n/2}-1} \binom{\ell}{r-1}(n+1+r-2(\ell+1))\\
     &=(n+1+r) (r-1)! \sum_{\ell=r-1}^{\ceil{n/2}-1} \binom{\ell}{r-1}-2r!
     \sum_{\ell=r-1}^{\ceil{n/2}-1} \binom{\ell+1}{r}.
     \intertext{Using upper summation,
       cf.~\cite[5.10]{Graham-Knuth-Patashnik:1994}, yields}
     \sum_{\ell=r-1}^{\ceil{n/2}-1}
     \frac{\ell!\,(n-2\ell+r-1)}{(\ell-r+1)!}
     &=(n+1+r)(r-1)! \binom{\ceil{n/2}}{r} - 2r!\binom{\ceil{n/2}+1}{r+1}\\
     &=(r-1)!\binom{\ceil{n/2}}{r}\left(n+1+r - 2\frac{r}{r+1}(\ceil{n/2}+1)\right)\\
     &=(r-1)!\binom{\ceil{n/2}}{r}\left(n+r-1 -
       2\ceil{n/2}+\frac{2\ceil{n/2}}{r+1}+\frac{2}{r+1}\right).
  \end{align*}
  The lemma follows by replacing $n-2\ceil{n/2}$ with $\iverson*{$n$
      even} - 1$ and by collecting terms.
\end{proof}

\begin{proof}[of Theorem~\ref{thm:distribution} continued]
  To finish the proof, we simply plug in the result of
  Lemma~\ref{lem:distribtion-sum} into~\eqref{eq:distribution-intermediate} and
  rewrite the factorials as binomial coefficients.
\end{proof}

As a next step, we want to prove Corollary~\ref{cor:distribution-asy}, which
extracts the asymptotic behaviour of the distribution
(Theorem~\ref{thm:distribution}). To show that asymptotic formula, we
will use
the following auxiliary result.

\begin{lemma}\label{lem:quo-factorials}
 Let $0<\eps\leq\frac12$. For integers $c$ with $c =
 \Oh{N^{1/2-\eps}}$ we have
 \begin{equation*}
   c!\,\binom{N}{c}  = N^c(1+\Oh{1/N^{2\eps}}).
 \end{equation*}
\end{lemma}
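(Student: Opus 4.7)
The plan is to rewrite the left-hand side as a falling factorial and analyse it via a logarithmic expansion. Write
\begin{equation*}
  c!\,\binom{N}{c} = \prod_{j=0}^{c-1}(N-j) = N^c \prod_{j=0}^{c-1}\Bigl(1 - \frac{j}{N}\Bigr),
\end{equation*}
so the task reduces to showing that the product on the right equals $1 + \Oh{1/N^{2\eps}}$.

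The key observation is that under the assumption $c = \Oh{N^{1/2-\eps}}$, all the factors $j/N$ (for $0\leq j\leq c-1$) are uniformly of order $\Oh{N^{-1/2-\eps}}$ and in particular tend to $0$. Hence I can take logarithms and use the Taylor bound $\log(1-x) = -x + \Oh{x^2}$ valid for $\abs{x}\leq \frac12$, obtaining
\begin{equation*}
  \sum_{j=0}^{c-1} \log\Bigl(1 - \frac{j}{N}\Bigr)
   = -\frac{1}{N}\sum_{j=0}^{c-1} j + \Oh[Big]{\frac{1}{N^2}\sum_{j=0}^{c-1} j^2}.
\end{equation*}
Using $\sum_{j=0}^{c-1} j = \Oh{c^2}$ and $\sum_{j=0}^{c-1} j^2 = \Oh{c^3}$ together with $c = \Oh{N^{1/2-\eps}}$, the first term is $\Oh{c^2/N} = \Oh{N^{-2\eps}}$ and the second is $\Oh{c^3/N^2} = \Oh{N^{-1/2-3\eps}}$, which is of smaller order. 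Thus the logarithm of the product is $\Oh{N^{-2\eps}}$.

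Exponentiating via $e^x = 1 + \Oh{x}$ for $x = \Oh{1}$ (which applies here since $N^{-2\eps}\to 0$) yields $\prod_{j=0}^{c-1}(1 - j/N) = 1 + \Oh{N^{-2\eps}}$, and multiplying by $N^c$ gives the claim. The only mild subtlety is that $\eps$ can be as small as desired but the hypothesis $\eps\leq \frac12$ together with $c = \Oh{N^{1/2-\eps}}$ ensures $j/N$ is bounded away from $1$ uniformly for large $N$, so all the expansions are legitimate; this uniformity is what must be checked carefully but involves no real obstacle.
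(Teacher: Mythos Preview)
Your argument is correct and follows essentially the same route as the paper: write $c!\,\binom{N}{c}=N^c\prod_{j=0}^{c-1}(1-j/N)$ and show the product is $1+\Oh{c^2/N}=1+\Oh{N^{-2\eps}}$. The only difference is that where you take logarithms and Taylor-expand, the paper uses the one-line inequality $\prod_{j}(1-x_j)\ge 1-\sum_j x_j$ to get the lower bound directly (the upper bound $N^c\ge c!\,\binom{N}{c}$ being trivial), which is slightly more elementary but otherwise equivalent.
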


\begin{proof}
The inequality $N^c \ge c!\,\binom{N}{c}$ is trivial. We observe
 \begin{align*}
   c!\,\binom{N}{c}
   &= N^{c}\cdot\prod_{0\le i < c}\left(1-\frac{i}{N}\right)
   \ge N^{c}\cdot\biggl(1-\sum_{0\le i < c}\frac{i}{N}\biggr)
   \ge N^c \left(1-\frac{c^2}{2N}\right)\\
   &= N^c \left(1+\Oh{1/N^{2\eps}}\right),
 \end{align*}
where the assumption on $c$ has been used in the last step.
\end{proof}

\begin{proof}[of Corollary~\ref{cor:distribution-asy}]
  By using Lemma~\ref{lem:quo-factorials}, the exact result of
  Theorem~\ref{thm:distribution} becomes
  \begin{align*}
    \P{Z_n = r}
    &= \frac{2^r}{n} \frac{n^r}{2^r} \frac{1}{n^r}
    \left(\frac{n}{r(r+1)} + \Oh{1}\right)
    \left(1+\Oh{1/n^{2\eps}}\right) \\
    &\phantom{=}\;+ \iverson*{$n$ even}
    \frac{2^{r-1}(r-1)}{n} \frac{n^{r-1}}{2^{r-1}} \frac{1}{n^r}
    \left(1+\Oh{1/n^{2\eps}}\right) \\
    &= \frac{1}{r(r+1)} \left(1 + \Oh{1/n^{2\eps}}\right)
  \end{align*}
  as claimed.
\end{proof}
}


\part{Part II: More Lattice Paths and Zeros}
\label{sec:more-lattice-paths}

This second part deals with an analysis of some special
zero-configurations, which are needed for the analysis of the
partitioning strategies in Part~\ref{sec:quicksort}. Moreover, in
Section~\ref{sec:lattice-paths-N}, we extend the model introduced in
Section~\ref{sec:description} to accommodate lattice paths of
variable length. Again expectations are studied exactly and asymptotically.


\section{Going to Zero and Coming From Zero}
\label{sec:more-zeros}
For the analysis of comparison-optimal dual-pivot quicksort algorithms
(see Part~\ref{sec:quicksort}) we need the following
two variants of zeros on the lattice path.
\begin{itemize}
\item An \emph{up-to-zero situation} is a point $(x, 0) \in P_n$ such
  that $(x - 1, - 1) \in P_n$.
\item A \emph{down-from-zero situation} is a
  point $(x, 0) \in P_n$ such that $(x + 1, -1) \in P_n$.
\end{itemize}
\def\appendixtofromzerodef{
For a path $P_n$ of length $n$
chosen according to the probabilistic model from
Section~\ref{sec:description} we define the random variables
\begin{align*}
  Z^{\nearrow}_n &= \text{number of up-to-zero situations on $P_n$}
  \intertext{and}
  Z^{\searrow}_n &= \text{number of down-from-zero situations on $P_n$}.
\end{align*}
}
We show
  \begin{equation*}
    \E{\text{number of up-to-zero situations on $P_n$}} =
    \frac12\Bigl(\E{Z_n} - \frac{\iverson*{$n$ even}}{n+1}\Bigr)
    = \frac12 \Hodd_n
  \end{equation*}
    and
  \begin{equation*}
    \E{\text{number of down-from-zero situations on $P_n$}} =
    \frac12\left(\E{Z_n} - 1\right)
    = \frac12 \bigl(\Hodd_{n+1} - 1\bigr).
  \end{equation*}

\begin{proof}[idea]
The factor $\frac12$ stems from symmetry: Up-to-zero situations at $(x, 0)$
occur with the same probability as the symmetric ``down-to-zero'' situations at $(x,0)$, 
similarly for down-from-zero situations.  
The correction terms $\frac{\iverson*{$n$ even}}{n+1}$ and $1$ are caused by the
fact that there is a zero, but no up-to-zero situation, at $(0,0)$, and a zero, 
but no down-from-zero situation, at $(n,0)$. The full proofs are in 
Appendix~\ref*{sec:appendix:more-zeros}.
\end{proof}

\def\appendixtofromzero{
\begin{lemma}\label{lem:paths-prob:updown}
  For a randomly (as described in Section~\ref{sec:description}) chosen path of
  length~$n$, 
  \begin{align*}
    \E{Z^\nearrow_n} &=
    \frac12\left(\E{Z_n} - \frac{\iverson*{$n$ even}}{n+1}\right)
    = \frac12 \Hodd_n
    \intertext{and}
    \E{Z^\searrow_n} &= \frac12\left(\E{Z_n} - 1\right)
    = \frac12 \bigl(\Hodd_{n+1} - 1\bigr).
  \end{align*}
\end{lemma}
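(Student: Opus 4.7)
The plan is to reduce both identities to Theorem~\ref{thm:paths-prob} by a single symmetry argument.

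First I would observe that the probability model of Section~\ref{sec:description} is invariant under reflection across the horizontal axis: the set $\set{-n,-n+2,\dots,n-2,n}$ of feasible starting heights is symmetric, and for any fixed starting point $(0,s)$ the uniform distribution on paths from $(0,s)$ to $(n,0)$ maps, under the reflection $(x,y)\mapsto(x,-y)$, bijectively to the uniform distribution on paths from $(0,-s)$ to $(n,0)$. Hence the reflection preserves the law of $P_n$.

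Next, I would classify zeros by how they are reached. For a zero $(x,0)\in P_n$ with $x\geq 1$ the preceding point is either $(x-1,-1)$ (up-to-zero) or $(x-1,+1)$ (a down-to-zero situation, say). Reflection across the axis swaps up-to-zero and down-to-zero situations, so their counts are equidistributed and in particular have the same expectation. The only zero of $P_n$ that is not of either type is $(0,0)$, which occurs precisely when the random starting height is $S=0$; this has probability $\iverson*{$n$ even}/(n+1)$. Therefore
\begin{equation*}
  \E{Z_n} = \frac{\iverson*{$n$ even}}{n+1} + 2\E{Z^\nearrow_n},
\end{equation*}
which rearranges to the first claimed identity. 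Substituting $\E{Z_n}=\Hodd_{n+1}$ from Theorem~\ref{thm:paths-prob} and using $\Hodd_{n+1}=\Hodd_n+\iverson*{$n$ even}/(n+1)$ gives $\E{Z^\nearrow_n}=\tfrac12\Hodd_n$.

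For the second identity I would run the symmetric argument at the other end of the path. Classify each zero $(x,0)$ with $x\leq n-1$ by the step that follows: either up (to $(x+1,1)$) or down (to $(x+1,-1)$, i.e.\ a down-from-zero situation). Reflection again exchanges these two kinds. The single zero that falls in neither class is the terminal zero $(n,0)$, which is present with probability $1$. This yields
\begin{equation*}
  \E{Z_n} = 1 + 2\E{Z^\searrow_n},
\end{equation*}
and the formula $\E{Z^\searrow_n}=\tfrac12(\Hodd_{n+1}-1)$ follows. The only real subtlety in the whole argument is being careful that the two ``boundary'' zeros are handled correctly, namely that the initial zero is counted only on the $\iverson*{$n$ even}/(n+1)$-probability event $\{S=0\}$ while the terminal zero is certain; beyond this, no computation is required.
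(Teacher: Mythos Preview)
Your proof is correct and follows essentially the same approach as the paper: the paper's proof sketch in the main text is exactly your reflection-symmetry argument with the two boundary corrections, and the appendix version merely unrolls it pointwise (summing $\P{(n-m-1,-1)\in P_n\text{ and }(n-m,0)\in P_n}$ over $m$ and noting each term is half of $\P{(n-m,0)\in P_n}$). The content is the same.
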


\begin{proof}[of Lemma~\ref{lem:paths-prob:updown}]
In the proof of Theorem~\ref{thm:paths-prob} linearity of the expectation gave that
\begin{equation}
    \E{Z_n} = \sum_{m=0}^{n} \P{\text{path~$P_n$ runs through $(n-m,0)$}}.
    \label{eq:proof:paths-prob:zeroes}
\end{equation}
Similarly we have
\begin{equation*}
    \E{Z^\nearrow_n} = \sum_{m=0}^{n-1} \P{\text{path~$P_n$ runs through $(n-m-1,-1)$ and $(n-m,0)$}}.
\end{equation*}
Note that the sum for $\E{Z_n}$ has a term for $m=n$ but the one for $\E{Z^\nearrow_n}$ does not.
Moreover, for $m<n$ the term
\begin{equation*}
  \P{\text{path~$P_n$ runs through $(n-m-1,-1)$ and $(n-m,0)$}}
\end{equation*}
is exactly half of
$\P{\text{path~$P_n$ runs through $(n-m,0)$}}$, since for every path through $(n-m-1,-1)$ and $(n-m,0)$
there is one with the same probability through $(n-m-1,1)$ and $(n-m,0)$ by
symmetry (or use the transition probabilities from \eqref{eq:transition-probabilities}). 
So
\begin{equation*}
  \E{Z^\nearrow_n}
  = \frac12 \left(\E{Z_n} - \frac{\iverson*{$n$ even}}{n+1}\right)
  = \frac12 \sum_{m=1}^{n} \frac{\iverson*{$m$ odd}}{m}
  = \frac12 \Hodd_n.
 \end{equation*}

Similarly, comparing \eqref{eq:proof:paths-prob:zeroes} with
\begin{equation*}
    \E{Z^\searrow_n} = \sum_{m=1}^{n} \P{\text{path~$P_n$ runs through $(n-m,0)$ and $(n-m+1,-1)$}}
\end{equation*}
we see that $\E{Z_n}$ has a term for $m=0$ but $\E{Z^\searrow_n}$ does not. For
$m>0$ exactly half of the paths that run through $(n-m,0)$ have a down-from-zero situation at $(n-m,0)$.
So
\begin{equation*}
  \E{Z^\searrow_n}
  = \frac12(\E{Z_n} - 1)
  = \frac12 \bigl(\Hodd_{n+1} - 1\bigr).
\end{equation*}
\end{proof}

Applying Corollary~\ref{cor:main-asy}, we get the following asymptotic expansion.

\begin{corollary}
  The expected number of up-to-zero situations is
  \begin{align*}
    \E{Z^\nearrow_n} &=
    \frac14 \log n + \frac{\gamma+\log2}{4}
    + \frac{\iverson*{$n$ odd}}{4n}
    + \frac{3\iverson*{$n$ even} - 2}{24n^2}
    + \Oh[Big]{\frac{1}{n^4}},
    \intertext{and the expected number of down-from-zero situations is}
    \E{Z^\searrow_n} &=
    \frac14 \log n + \frac{\gamma+\log2-2}{4} \\
    &\phantom{=}\hphantom{0}
    + \frac{\iverson*{$n$ even} + 1}{4n}
    - \frac{9\iverson*{$n$ even} + 2}{24n^2}
    + \frac{\iverson*{$n$ even}}{2n^3}
    + \Oh[Big]{\frac{1}{n^4}},
  \end{align*}
  asymptotically as $n$ tends to infinity.
  \label{cor:up:down:asym} 
\end{corollary}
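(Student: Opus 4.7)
The plan is to combine the exact formulae from Lemma~\ref{lem:paths-prob:updown} with the asymptotic expansion~\eqref{eq:asymptotic-odd-harmonic-number} of the odd harmonic numbers from Lemma~\ref{lem:harmonic-asy}. For the up-to-zero case, $\E{Z^\nearrow_n}=\frac12\Hodd_n$, so one simply divides~\eqref{eq:asymptotic-odd-harmonic-number} by~$2$; the parity indicators appearing in the corollary coincide with those in the lemma.

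For the down-from-zero case, the identity $\E{Z^\searrow_n}=\frac12(\Hodd_{n+1}-1)$ forces one to re-expand $\Hodd_{n+1}$ around~$n$. I would set $\alpha_n=\iverson*{$n$ even}$ so that $\iverson*{$n+1$ odd}=\alpha_n$ and $\iverson*{$n+1$ even}=1-\alpha_n$, substitute $n\mapsto n+1$ in~\eqref{eq:asymptotic-odd-harmonic-number}, and then expand each of $\log(n+1)$, $1/(n+1)$, $1/(n+1)^2$ in powers of $1/n$ (keeping terms up to order $1/n^3$ so that the $\Oh{1/n^4}$ error is preserved). Collecting the resulting terms, subtracting $\tfrac12$, and grouping by parity produces the claimed coefficients.

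The main subtlety is simply book-keeping: the parity indicator flips when passing from $n$ to $n+1$, so the coefficients of $1/n$, $1/n^2$, $1/n^3$ get mixed between their ``odd'' and ``even'' parts, and the constants contributed by the expansions of $\log(n+1)$, $1/(n+1)$, and $1/(n+1)^2$ have to be carefully combined with the $-\tfrac12$ to produce the constant $(\gamma+\log 2-2)/4$. There is no real analytical difficulty; as in the proof of Lemma~\ref{lem:harmonic-asy} the routine algebra is most safely done with a computer algebra system (e.g.\ the \textsc{SageMath} asymptotic expansions module). The only conceptual check required is that all powers $\alpha_n^k$ for $k\geq 2$ collapse to $\alpha_n$, which ensures that the final answer is a polynomial in $1/n$ with coefficients linear in the single indicator $\iverson*{$n$ even}$.
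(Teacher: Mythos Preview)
Your proposal is correct and essentially coincides with the paper's argument: both combine the exact identities of Lemma~\ref{lem:paths-prob:updown} with the known asymptotics of the (odd) harmonic numbers. The only cosmetic difference is that the paper invokes Corollary~\ref{cor:main-asy} (the expansion of $\E{Z_n}=\Hodd_{n+1}$ already expressed in powers of $1/n$) and then uses the middle forms $\E{Z^\nearrow_n}=\tfrac12\bigl(\E{Z_n}-\iverson*{$n$ even}/(n+1)\bigr)$ and $\E{Z^\searrow_n}=\tfrac12(\E{Z_n}-1)$, which for the down-from-zero case spares you the re-expansion of $\Hodd_{n+1}$ around~$n$; conversely, your direct use of $\tfrac12\Hodd_n$ for the up-to-zero case avoids the correction term the paper's route would require there.
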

}


\section{Lattice Paths of Variable Length}
\label{sec:lattice-paths-N}


In this section, we use a random variable~$N'$ instead of the fixed~$n$
above. Let us fix an~$n\in\N$ with $n\geq2$. We choose a path
length~$N'$ according to the following rules.
\begin{enumerate}
\item Choose $(P,Q)$ with $1\le P < Q \le n$
 uniformly at random from all $\binom{n}{2}$ possibilities.
\item Let $N' = n - 1 - (Q-P)$.
\item Choose a path of length $N'$ according to
Section~\ref{sec:description}.
\end{enumerate}
Let us denote the number of up-to-zero and down-from-zero situations
on the path by $X^\nearrow_{n}$ and $X^\searrow_{n}$, respectively.
In Appendix~\ref*{sec:appendix:lattice-paths-N}, we show
  \begin{align*}
    \E{X^\nearrow_{n}} &= \frac{1}{2\binom{n}{2}}
    \sum_{n'=0}^{n-2} \sum_{m=1}^{n'} \iverson*{$m$ odd}\frac{n'+1}{m} =
    \frac12 \Hodd_{n-2} - \frac18 +\frac{(-1)^n}{8(n-\iverson*{$n$
    even})}
    \intertext{and}
    \E{X^\searrow_{n}} &= \frac{1}{2\binom{n}{2}}
    \sum_{n'=0}^{n-2} \sum_{m=3}^{n'+1} \iverson*{$m$ odd}\frac{n'+1}{m}
    =\E{X^\nearrow_{n}} - \frac12 +\frac1{2(n-\iverson*{$n$ even})}.
  \end{align*}

\def\appendixvarlen{
The following proposition will be proven in this section.

\begin{proposition}\label{pro:dual-pivot-expect-exact}
  For a randomly chosen path (as described above), the expected number
  of up-to-zero situations is
  \begin{align}
    \E{X^\nearrow_{n}} &= \frac{1}{2\binom{n}{2}}
    \sum_{n'=0}^{n-2} \sum_{m=1}^{n'} \iverson*{$m$ odd}\frac{n'+1}{m} =
    \frac12 \Hodd_{n-2} - \frac18 +\frac{(-1)^n}{8(n-\iverson*{$n$ even})}
    \label{eq:E_X_up_n_exact}
    \intertext{and the expected number of down-from-zero situations is}
    \E{X^\searrow_{n}} &= \frac{1}{2\binom{n}{2}}
    \sum_{n'=0}^{n-2} \sum_{m=3}^{n'+1} \iverson*{$m$ odd}\frac{n'+1}{m}
    =\E{X^\nearrow_{n}} - \frac12 +\frac1{2(n-\iverson*{$n$ even})}.
        \label{eq:E_X_down_n_exact}
  \end{align}
  The corresponding generating functions are
  \begin{align*}
    \sum_{n\ge 2}\E{X^\nearrow_{n}} z^{n} &=\frac{\artanh(z)}{2  {(1-z)}}-
    \frac{z^2}{8  {(1-z)}} - \frac{3z + 5}{8}  \artanh(z) + \frac{1}{8}  z ,\\
    \sum_{n\ge 2}\E{X^\searrow_{n}} z^{n} &=\frac{\artanh(z)}{2  {(1-z)}}
  - \frac{5z^2}{8  {(1-z)}} + \frac{z - 1}{8}  \artanh(z) - \frac{3}{8}  z.
  \end{align*}
  Asymptotically, we have
  \begin{align*}
    \E{X^\nearrow_{n}} &= \frac14 \log n
    + \frac{2\gamma + 2\log2 - 1}{8}
    - \frac{3}{8n} - \frac{3\iverson*{$n$ even} + 1}{12n^2} - \frac{3\iverson*{$n$ even}}{8n^3} + \Oh[Big]{\frac{1}{n^4}},\\
   \E{X^\searrow_{n}} &= \frac14 \log n
   + \frac{2\gamma + 2\log2 - 5}{8}
+ \frac{1}{8n} + \frac{3\iverson*{$n$ even} - 1}{12n^2} + \frac{\iverson*{$n$ even}}{8n^3} 
    + \Oh[Big]{\frac{1}{n^4}},
  \end{align*}
  as $n\to\infty$.
\end{proposition}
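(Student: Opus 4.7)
The plan is to reduce the variable-length model to the fixed-length model by conditioning on $N'$, then evaluate the resulting double sums in closed form, and finally extract generating functions and asymptotics.

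First I would compute $\P{N'=n'}$ for $0\le n'\le n-2$: since $N'=n'$ forces $Q-P=n-1-n'$, there are exactly $n'+1$ admissible pairs $(P,Q)$, giving $\P{N'=n'}=(n'+1)/\binom{n}{2}$. Combining this with Lemma~\ref{lem:paths-prob:updown}, which yields $\E{Z^\nearrow_{n'}}=\frac12\Hodd_{n'}$ and $\E{Z^\searrow_{n'}}=\frac12(\Hodd_{n'+1}-1)$, together with linearity of expectation and the definition $\Hodd_{n'}=\sum_{m=1}^{n'}\iverson*{$m$ odd}/m$, produces immediately the first equality in each of \eqref{eq:E_X_up_n_exact} and \eqref{eq:E_X_down_n_exact}. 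The shift by $1$ appearing in $\Hodd_{n'+1}-1$ eliminates the $m=1$ term, which explains why the inner summation in \eqref{eq:E_X_down_n_exact} starts at $m=3$.

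Next I would swap the order of summation in \eqref{eq:E_X_up_n_exact}. After fixing an odd $m$ with $1\le m\le n-2$, the inner sum becomes $\sum_{n'=m}^{n-2}(n'+1)=\tfrac12(n+m)(n-m-1)=\tfrac12\bigl(n(n-1)-m(m+1)\bigr)$. Dividing by $m$ and summing over odd $m$ splits the double sum into $\tfrac{n(n-1)}{2}\Hodd_{n-2}$ minus $\tfrac12\sum_{\text{odd }m\le n-2}(m+1)$; the latter is just $2\bigl(1+2+\cdots+\lfloor(n-1)/2\rfloor\bigr)$ and equals $n(n-2)/4$ for even $n$ and $(n^2-1)/4$ for odd $n$. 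Dividing through by $n(n-1)=2\binom{n}{2}$ yields $\frac12\Hodd_{n-2}-\frac{n-2}{8(n-1)}$ and $\frac12\Hodd_{n-2}-\frac{n+1}{8n}$ respectively, and each of these can be rewritten as $\frac12\Hodd_{n-2}-\frac18+\frac{(-1)^n}{8(n-\iverson*{$n$ even})}$ by a direct check. The down-from-zero case is analogous, and the relation $\E{X^\searrow_n}=\E{X^\nearrow_n}-\frac12+\frac1{2(n-\iverson*{$n$ even})}$ falls out by writing $\Hodd_{n'+1}-1-\Hodd_{n'}=\iverson*{$n'$ even}/(n'+1)-1$ and summing.

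For the generating function, I would start from $\sum_{n\ge 0}\Hodd_n z^n=\artanh(z)/(1-z)$, which follows because $\sum_{m\text{ odd}}z^m/m=\artanh(z)$ and multiplication by $1/(1-z)$ implements the cumulative sum. Then $\sum_{n\ge 2}\tfrac12\Hodd_{n-2}z^n=\tfrac{z^2\artanh(z)}{2(1-z)}$, and writing $z^2/(1-z)=1/(1-z)-1-z$ separates the $\artanh(z)/(2(1-z))$ piece from a polynomial $\artanh$ correction. The remaining terms $-\tfrac18 z^n$ and $\tfrac{(-1)^n}{8(n-\iverson*{$n$ even})}z^n$ yield $-\tfrac{z^2}{8(1-z)}$ and, respectively, $\tfrac18\bigl(z\artanh(z)-(\artanh(z)-z)\bigr)$, after splitting by parity of $n$. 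Collecting the $\artanh(z)$ coefficients gives $-\tfrac{3z+5}{8}$, matching the stated formula; the computation for $X^\searrow_n$ is identical in structure.

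Finally, the asymptotic expansions follow by substituting $\Hodd_{n-2}$ with its expansion from Lemma~\ref{lem:harmonic-asy} (using $\log(n-2)=\log n-2/n-2/n^2-\cdots$ and the fact that $n-2$ has the same parity as $n$), and expanding $1/(n-\iverson*{$n$ even})$ as a series in $1/n$ with parity-dependent corrections. The main obstacle I anticipate is bookkeeping the parity-dependent terms consistently: the Iverson brackets in $\Hodd_{n-2}$, in the exact formula, and in $1/(n-\iverson*{$n$ even})$ must be combined carefully so that the final expansion has a unified $\iverson*{$n$ even}$ coefficient, but no genuinely new idea is needed beyond Taylor expansion.
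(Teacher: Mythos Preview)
Your proposal is correct and follows essentially the same approach as the paper: condition on $N'$ using Lemma~\ref{lem:dist-N} and Lemma~\ref{lem:paths-prob:updown} to obtain the double sums, swap the order of summation and evaluate the inner arithmetic series to reach the closed forms, derive the generating functions via $\sum_{n\ge0}\Hodd_n z^n=\artanh(z)/(1-z)$, and read off the asymptotics from Lemma~\ref{lem:harmonic-asy}. The only cosmetic difference is that the paper packages the parity-dependent correction as $\lfloor(n-1)/2\rfloor\lfloor(n+1)/2\rfloor/(4\binom{n}{2})$ before rewriting it, whereas you split into even and odd $n$ directly.
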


To prove this proposition, we first compute the distribution of $N'$. The
following lemma is a simple consequence of the definition of~$N'$.

\begin{lemma}\label{lem:dist-N}
  If $n'\in\set{0,1,\dots,n-2}$, then
  \begin{equation*}
    \P{N' = n'} = \frac{n'+1}{\binom{n}{2}},
  \end{equation*}
  and otherwise~$\P{N' = n'} = 0$.
\end{lemma}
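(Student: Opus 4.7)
The plan is to reduce the lemma to an elementary counting exercise on pairs of indices. Since $(P,Q)$ is drawn uniformly from the $\binom{n}{2}$ ordered pairs with $1 \le P < Q \le n$, and $N'$ depends on $(P,Q)$ only through the difference $Q - P$, it suffices to determine the distribution of this difference and then translate back via the definition $N' = n - 1 - (Q - P)$.

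First I would observe that the possible values of $Q - P$ are exactly $\set{1, 2, \ldots, n-1}$, which corresponds via $N' = n - 1 - (Q - P)$ to $N' \in \set{0, 1, \ldots, n-2}$. This immediately establishes the second part of the claim: if $n' \notin \set{0, 1, \ldots, n-2}$, then no pair $(P,Q)$ can produce $N' = n'$, so $\P{N' = n'} = 0$.

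Next, for each fixed $d \in \set{1, \ldots, n-1}$, I would count the pairs $(P,Q)$ with $Q - P = d$. Since $P$ can take any value in $\set{1, 2, \ldots, n - d}$ and $Q$ is then determined as $P + d$, there are exactly $n - d$ such pairs. Setting $d = n - 1 - n'$ gives $n - d = n' + 1$, and dividing by $\binom{n}{2}$ yields the claimed probability.

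The main obstacle here is essentially nonexistent: the entire argument is a one-line combinatorial identity, and the only care needed is the index bookkeeping in the linear change of variables between $Q - P$ and $N'$.
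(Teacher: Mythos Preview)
Your proposal is correct and follows essentially the same argument as the paper's own proof: both fix the difference $\Delta = Q - P = n - 1 - n'$, count the $n - \Delta = n' + 1$ admissible values of $P$, and divide by $\binom{n}{2}$. Your version is slightly more explicit about the ``otherwise $0$'' case, but the core reasoning is identical.
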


\begin{proof}
  Fix $n'\in\set{0,1,\dots,n-2}$, and set $\Delta=n - 1 - n'$. There
  are $n-\Delta=n'+1$ choices of the random variable~$P$ such that
  $Q=P+\Delta$ is still at most~$n$. Thus, the lemma follows.
\end{proof}

\begin{proof}[of Proposition~\ref{pro:dual-pivot-expect-exact}]
  For any random variable $Y_{n}$ depending on $N'$, the law of total expectation yields
  \begin{equation}\label{eq:cond-exp-n0}
    \E{Y_{n}} = \sum_{n'\in\N_0}
    \P{N'=n'} \E[empty]{Y_{n} \mid N'=n'}.
  \end{equation}

  From \eqref{eq:cond-exp-n0} and the
  Lemmata~\ref{lem:paths-prob:updown} and~\ref{lem:dist-N}, we immediately 
	get that $\E{X^\nearrow_{n}}$ and $\E{X^\searrow_{n}}$
	are equal to the respective double sum given in 
	\eqref{eq:E_X_up_n_exact} and \eqref{eq:E_X_down_n_exact}.

  To obtain the difference, one observes that
  \begin{equation*}
    \E[normal]{Z^\nearrow_{n'}}-\E[normal]{Z^\searrow_{n'}}
    =\frac12\Bigl(1-\frac{\iverson*{$n'$ even}}{n'+1}\Bigr),
  \end{equation*}
  from which we obtain
  \begin{equation*}
    \E{X^\nearrow_{n}}-\E{X^\searrow_{n}} = \frac{1}{2\binom{n}{2}}
    \sum_{n'=0}^{n-2} \left(1-\frac{\iverson*{$n'$ even}}{n'+1}\right)\cdot(n'+1).
  \end{equation*}
  Simplifying this to $\frac12-\frac{1}{2(n - [n\text{ even}])}$ is straightforward.

  It remains to verify the second expression for $\E{X^\nearrow_{n}}$ given in \eqref{eq:E_X_up_n_exact}.
  We have   
  \begin{align*}
    \E{X^\nearrow_{n}} 
    &= 
    \frac{1}{2\binom{n}{2}}
    \sum_{n'=0}^{n-2} \sum_{m=1}^{n'} \iverson*{$m$ odd}\frac{n'+1}{m} \\
    &=
    \frac{1}{2\binom{n}{2}}
    \sum_{m=1}^{n-2} \frac{\iverson*{$m$ odd}}{m} \sum_{n'=m}^{n-2} (n'+1) \\
    &=
    \frac{1}{2\binom{n}{2}}
    \sum_{m=1}^{n-2} \iverson*{$m$ odd}\frac{(n-1)n - m(m+1)}{2m}.
  \end{align*}
  (We just summed the arithmetic series.)
  So
  \begin{align*}
    \E{X^\nearrow_{n}} 
    &=
    \frac{1}{2\binom{n}{2}}
    \sum_{m=1}^{n-2} \iverson*{$m$ odd}\left( \frac{\binom{n}{2}}{m} - \frac{m+1}{2}\right)\\
    &=
    \frac{1}{2}\Hodd_{n-2} - \frac{1}{4\binom{n}{2}} \sum_{m=2}^{n-1}
    \iverson*{$m$ even}m\\
    &=
    \frac{1}{2}\Hodd_{n-2} - \frac{\floor{(n-1)/2}\floor{(n+1)/2}}{4\binom{n}{2}}.
  \end{align*}
  This leads to~\eqref{eq:E_X_up_n_exact}.

  We now compute the generating functions.
  We first note that for $k\in\Z$, we have
  \begin{equation}\label{eq:artanh}
    \sum_{n> k}\frac{\iverson*{$n-k$ odd}}{n-k}z^{n}=
    z^k\artanh(z).
  \end{equation}

  Taking the summatory function amounts to division by $(1-z)$ and a shift in
  the argument corresponds to multiplication by $z$, so we obtain
  \begin{equation*}
    \sum_{n\ge 2}\Hodd_{n-2}z^n = z^2\frac{\artanh(z)}{1-z}.
  \end{equation*}

  The
  remaining summands lead to geometric series and another instance of
  \eqref{eq:artanh}, so we obtain
  \begin{align*}
    \sum_{n\ge 2}\E{X^\nearrow_{n}} z^{n} =
    \frac{z^2\artanh(z)}{2(1-z)}-\frac{z^2}{8(1-z)}+\frac{z-1}{8}\artanh(z)+\frac{z}{8}
  \end{align*}
  where the final summand $z/8$ has to be added as we sum over $n\ge
  2$ only.
  Minor simplifications lead to the desired formula. The generating function for
  $\E{X_n^\searrow}$ follows by adding
  \begin{equation*}
    -\frac{z^2}{2(1-z)}+\frac{1+z}{2}\artanh(z)-\frac{z}{2},
  \end{equation*}
  corresponding to \eqref{eq:E_X_down_n_exact}.

  The asymptotic expressions follow from the explicit formul\ae{} via 
  Lemma~\ref{lem:harmonic-asy}.
\end{proof}
}


\part{Part III: Dual-Pivot Quicksort}
\label{sec:quicksort}

In this third and last part of this work, we finally analyze two
different partitioning strategies and the dual-pivot quicksort
algorithm itself.

As mentioned in the introduction, the number of comparisons of
dual-pivot quicksort depends on the concrete partitioning procedure.  
For example, if one wants to classify a
large element, i.\,e., an element larger than the larger pivot, 
comparing it with the larger pivot is unavoidable,
but it depends on the partitioning procedure whether
a comparison with the smaller pivot occurs, too. 

First, in Section~\ref{sec:solve-recurrence},
we make our set-up precise, fix notions, and start solving the
dual-pivot quicksort recurrence~\eqref{eq:recurrence}. This recurrence
relates the cost of the partitioning step to the total number of
comparisons of dual-pivot quicksort.

In Section~\ref{sec:part-costs} two partitioning strategies, called ``Clairvoyant'' and
``Count'', are introduced and their respective cost is analyzed. 
It will turn out that the results on lattice paths obtained in Parts~\ref{sec:lattice-paths} and
\ref{sec:more-lattice-paths} are central in determining the partitioning cost exactly.

Everything is put together in Section~\ref{sec:costs-main-asy}:
We obtain the exact comparison count for two versions of dual-pivot quicksort
(Theorems~\ref{thm:clairvoyant:cost} and~\ref{thm:count:cost}). 
The asymptotic behavior is extracted out of the exact results
(Corollaries \ref{cor:clairvoyant:cost:asy} and~\ref{cor:count:cost:asy}).


\section{Solving the Dual-Pivot Quicksort Recurrence}
\label{sec:solve-recurrence}

We consider versions of dual-pivot quicksort that act as follows
on an input sequence $(a_1,\ldots,a_n)$ consisting of distinct numbers:
If $n\le1$, do nothing, otherwise choose $a_1$ and $a_n$ as pivots,
and by one comparison determine $p=\min(a_1,a_n)$ and $q=\max(a_1,a_n)$.
Use a partitioning procedure to partition the remaining
$n-2$ elements into the three classes \emph{small}, \emph{medium}, and
\emph{large}. Then call dual-pivot quicksort recursively on each of these three classes
to finish the sorting, using the same partitioning procedure in all recursive calls.  

Let $P_n$, a random 
variable, denote the \emph{partitioning cost}. This is 
defined as the number of comparisons made by the partitioning procedure
if the input $(a_1,\ldots,a_n)$ is assumed to be in random order.
Further, let $C_n$ be the random variable
that denotes the number of comparisons carried out when sorting $n$ elements
with dual-pivot quicksort.
The reader should be aware that both $P_n$ and $C_n$ are determined by the partitioning
procedure used.  

Since the input $(a_1,\ldots,a_n)$ is in random order
and the partitioning procedure does nothing but compare elements with the 
two pivots, the inputs for the recursive calls are in random order as well,
which implies that the distributions of $P_n$ and $C_n$
only depend on $n$. In particular we may assume that when the sorting algorithm
is called on $n$ elements during recursion, 
the input is a permutation of $\{1,\ldots,n\}$.

The recurrence
\begin{equation}
  \E{C_n} =
     \E{P_n}  + \frac{3}{\binom{n}{2}}
      \sum_{k = 1}^{n - 2} (n - 1 - k) \E{C_{k}}
\label{eq:recurrence}
\end{equation}
for $n\ge0$
describes the connection between
the expected sorting cost $\E{C_n}$ and the expected partitioning cost $\E{P_n}$.
It will be central for our analysis. 
Note that it is irrelevant for \eqref{eq:recurrence} how the partitioning cost $\E{P_n}$ is determined;
it need not even be referring to comparisons. 
The recurrence is simple and well-known; a version of it occurs already in 
Sedgewick's thesis~\cite{Sedgewick:1975:thesis}.
For the convenience of the reader we give a brief justification in 
Appendix~\ref*{sec:appendix:quicksort:recurrence}. 
\def\appendixqsrecurrence{
We justify \eqref{eq:recurrence}. Assume the input is a random permutation of $\{1,\ldots,n\}$. 
The expected cost $\E{C_n}$ is the sum of the expected partitioning cost $\E{P_n}$
and the sum of the costs of the recursive calls for the small, the medium, and the large elements.
A recursive call for a group of size $k$ has expected cost $\E{C_k}$, for $0\le k \le n-2$.
The probability that the number of small elements is exactly $k$ is $(n-1-k)/\binom{n}{2}$,
since there are $\binom{n}{2}$ possible pivot pairs $\{p,q\}$ in total and $n-1-k$ pairs $\{p,q\}$ with $p=k+1$.
So the expected contribution to $\E{C_n}$ from the set of small elements 
is $\sum_{1\le k \le n-2}((n-1-k)/\binom{n}{2})\E{C_k}$.
Since there are $n-1-k$ pivot pairs with $k=q-p-1$ and 
$n-1-k$ pivot pairs with $q=n-k$, the contributions from the
recursive calls for the set of medium and the set of large elements are the same.
Adding the three contributions we obtain \eqref{eq:recurrence}.
}
In Hennequin~\cite{Hennequin:1991:analy} recurrence \eqref{eq:recurrence}
was solved exactly for $\E{P_n} =  a n + b$, where $a$ and $b$ are constants.
For $\E{P_n} =  a n + O(n^{1-\varepsilon})$
the solution is $\E{C_n} = \frac65 an \log n + O(n)$, see \cite[Theorem 1]{AumullerD15}.

\def\appendixrecint{
We recall how to solve recurrence~\eqref{eq:recurrence} using
generating functions. We follow 
Wild~\cite[\S~4.2.2]{Wild2013} who in turn follows
Hennequin~\cite{Hennequin:1991:analy}. The following lemma is contained in
slightly different notation in \cite{Wild2013}; nevertheless, we include the
proof here for the sake of self-containedness. This also allows us to make the
integration bounds explicit.

\begin{lemma}\label{le:integration}With $C_n$ and $P_n$ as above,
  $C(z)=\sum_{n\ge 0}\E{C_n}z^n$ and $P(z)=\sum_{n\ge 0}\E{P_n}z^n$, we have
  \begin{equation*}
    C(z)=(1-z)^3\int_{0}^z (1-t)^{-6}\int_{0}^t (1-s)^3P''(s)\,ds\,dt.
  \end{equation*}
\end{lemma}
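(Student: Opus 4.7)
The plan is to convert the recurrence~\eqref{eq:recurrence} into a linear ODE for $C(z)$ and then solve it explicitly.

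First I would rewrite \eqref{eq:recurrence} in the form
\begin{equation*}
  n(n-1)\E{C_n} = n(n-1)\E{P_n} + 6\sum_{k=0}^{n-2}(n-1-k)\E{C_k},
\end{equation*}
which is valid for every $n\ge 0$ (using $\E{C_0}=\E{C_1}=0$, the initial terms are trivially fine). Multiplying by $z^n$ and summing gives $z^2C''(z)$ on the left and $z^2P''(z)$ on the right, together with a Cauchy product of $C(z)$ with $\sum_{j\ge 1}jz^{j+1}=z^2/(1-z)^2$. After dividing by $z^2$, this yields the ODE
\begin{equation*}
  C''(z) - \frac{6}{(1-z)^2}C(z) = P''(z).
\end{equation*}

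Next I would solve this by looking for homogeneous solutions of the form $(1-z)^r$. The characteristic equation $r(r-1)=6$ has roots $r=3,-2$, which suggests the substitution $C(z)=(1-z)^3 f(z)$. A direct computation of the derivatives shows that the ODE collapses to
\begin{equation*}
  (1-z)^3 f''(z) - 6(1-z)^2 f'(z) = P''(z),
\end{equation*}
a first-order linear equation in $g(z):=f'(z)$. The integrating factor $(1-z)^6$ turns the left-hand side into $((1-z)^6 g(z))'$, so that
\begin{equation*}
  \bigl((1-z)^6 g(z)\bigr)' = (1-z)^3 P''(z).
\end{equation*}

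The initial conditions $C(0)=C'(0)=0$ (from $\E{C_0}=\E{C_1}=0$) translate into $f(0)=0$ and $g(0)=0$. Integrating once from $0$ to $t$ gives
\begin{equation*}
  g(t) = (1-t)^{-6}\int_0^t (1-s)^3 P''(s)\,ds,
\end{equation*}
and a second integration (with $f(0)=0$) followed by multiplication with $(1-z)^3$ yields the asserted double-integral representation of $C(z)$.

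The computations are routine; the only non-obvious piece is recognising that the characteristic roots $3$ and $-2$ make the ansatz $C(z)=(1-z)^3 f(z)$ reduce the second-order ODE cleanly to a first-order equation for $f'$, after which the integrating factor appears automatically.
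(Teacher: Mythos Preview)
Your proof is correct and follows essentially the same path as the paper's: derive the ODE $C''(z)-6(1-z)^{-2}C(z)=P''(z)$ from the recurrence, then reduce it to two successive first-order integrations using the initial data $C(0)=C'(0)=0$. The only cosmetic difference is that the paper factors the Euler-type operator as $(\theta-2)(\theta+3)$ with $\theta=(1-z)\,d/dz$ and introduces $D=(\theta+3)C$, whereas you accomplish the equivalent reduction via the substitution $C=(1-z)^3 f$; the resulting integrals are identical.
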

\begin{proof}
Multiplying \eqref{eq:recurrence} by $n(n-1)z^{n-2}$ and summing over all $n\ge
2$ yields
\begin{multline*}
  \sum_{n\ge 2}n(n-1)\E{C_n}z^{n-2}\\=\sum_{n\ge 2}n(n-1)\E{P_n}z^{n-2} + 6
  \sum_{n\ge 1}\sum_{k=0}^{n-1}(n-1-k)z^{n-k-2} \E{C_k}z^k.
\end{multline*}
Note that the range of the summations has been extended without any
consequences because of $\E{C_0}=0$. We replace $n-1$ by $n$ in the double sum
and write it as a product of
two generating functions:
\begin{multline*}
  \sum_{n\ge 1}\sum_{k=0}^{n-1}(n-1-k)z^{n-k-2} \E{C_k}z^k = 
  \sum_{n\ge 0}\sum_{k=0}^{n}(n-k)z^{n-k-1} \E{C_k}z^k \\= \biggl(\sum_{n\ge
    0}nz^{n-1}\biggr)C(z)=\biggl(\sum_{n\ge 0}z^n\biggr)'
  C(z)=\Bigl(\frac1{1-z}\Bigr)' C(z)=\frac{C(z)}{(1-z)^2}.
\end{multline*}
Thus we obtained
\begin{equation*}
  C''(z)=P''(z)+\frac{6}{(1-z)^2} C(z)
\end{equation*}
or, equivalently,
\begin{equation*}
  (1-z)^2C''(z)- 6 C(z)=(1-z)^2P''(z).
\end{equation*}
Setting $(\theta f)(z)=(1-z)f'(z)$ for a function $f$, this can be rewritten as
\begin{equation*}
  ((\theta^2+\theta-6)C)(z) = (1-z)^2P''(z).
\end{equation*}
Factoring $\theta^2+\theta-6$ as $(\theta-2)(\theta+3)$ and setting
$D=(\theta+3)C$, we first have to solve
\begin{equation*}
  ((\theta-2)D)(z)=(1-z)^2P''(z),
\end{equation*}
i.\,e.,
\begin{equation*}
  (1-z)D'(z)-2D(z)=(1-z)^2 P''(z).
\end{equation*}
Multiplication by $(1-z)$ yields
\begin{equation*}
  \bigl((1-z)^2 D(z)\bigr)'=(1-z)^3 P''(z).
\end{equation*}
Integration and the fact that $D(0)=C'(0)+3C(0)=\E{C_1+3C_0}=0$ yields
\begin{equation*}
  D(z)=\frac1{(1-z)^2}\int_0^z (1-s)^3P''(s)\,ds.
\end{equation*}
We still have to solve
\begin{equation*}
  (1-z)C'(z)+3C(z)=D(z).
\end{equation*}
We multiply by $(1-z)^{-4}$ and obtain
\begin{equation*}
  \bigl((1-z)^{-3}C(z)\bigr)'=(1-z)^{-4}D(z).
\end{equation*}
As $C(0)=0$, we obtain
\begin{equation*}
  C(z)=(1-z)^3\int_{0}^z (1-t)^{-4} D(t)\, dt.
\end{equation*}
\end{proof}
}


\section{Partitioning Algorithms and Their Cost}\label{sec:part-costs}

In Section~\ref{sec:solve-recurrence} we saw that
in order to calculate the average number of
comparisons~$\E{C_n}$ of a dual-pivot quicksort algorithm
we need the expected partitioning cost~$\E{P_n}$
of the partitioning procedure used. 
The aim of this section is to determine $\E{P_n}$ 
for two such partitioning procedures, ``Clairvoyant'' and ``Count'', 
to be described below.

We use the set-up described at the beginning of
Section~\ref{sec:solve-recurrence}.  
For partitioning we use comparisons to \emph{classify}
the $n-2$ elements $a_2$, \dots, $a_{n-1}$ as \emph{small},
\emph{medium}, or \emph{large}. We will be using the term \emph{classification} 
for this central aspect of partitioning. Details of a partitioning 
procedure that concern how the classes are
represented or elements are moved around may and will be ignored. (Nonetheless,
in Appendix~\ref*{sec:pseudocode}
we provide pseudocode for the considered classification strategies turned into 
dual-pivot quicksort algorithms.)
The cost $P_n$ depends on the concrete classification strategy used, 
the only relevant difference between
classification strategies being whether 
the next element to be classified is compared with
the smaller pivot~$p$ or the
larger pivot~$q$ first. This decision may
depend on the whole history of outcomes of previous comparisons.  (The
resulting abstract classification strategies may conveniently be
described as classification trees, see~\cite{AumullerD15}, but we do
not need this model here.)

Two comparisons are necessary for each medium element.
Furthermore, one comparison with $p$ is necessary for small and one
comparison with $q$ is necessary for large elements. As the input
consists of the elements $1$, \dots, $n$, there are $p - 1$ small,
$q - p - 1$ medium, and $n - q$ large elements.  Averaging over all
$\binom{n}{2}$ positions of the pivots, we see that on average
\begin{equation}\label{eq:part-necessary}
  \frac43(n-2) + 1
\end{equation}
\emph{necessary comparisons} are required no matter how the classification procedure
works, see \cite[(5)]{AumullerD15}; the summand $+1$ corresponds to
the comparison of $a_1$ and $a_n$ when choosing the two pivots.

We call other comparisons occurring during classification \emph{additional comparisons}.
That means, an additional comparison arises when a small element is compared with $q$ first or a large element
is compared with $p$ first.
In order to obtain $\E{P_n}$ for some classification strategy, we
have to calculate the expected number of additional comparisons.

Next we describe two (closely related) classification strategies
from~\cite{AumullerD15}.
Let $s_i$ and $\ell_i$ denote the number of elements that have been
classified as small and large, respectively, in the first $i$ classification rounds. Set $s_0 = \ell_0 = 0$.

\begin{strategy}[``Clairvoyant'']
  Assume the input contains $s=p-1$
  small and $\ell=n-q$ large elements.  When classifying the $i$th
  element, for $1\le i \le n-2$, proceed as follows: If $s - s_{i - 1}
  \geq \ell - \ell_{i - 1}$, compare with $p$ first, otherwise
  compare with $q$ first.
\end{strategy}
The number of additional comparisons of Clairvoyant is
denoted by $\Acv_n$, its partitioning cost $\Pcv_n$. 

Note that the strategy ``Clairvoyant'' cannot be implemented
algorithmically, since $s$ and $\ell$ are not known until the
classification is completed.

As shown in \cite[Section 6]{AumullerD15}, this strategy offers the
smallest expected classification cost among all strategies that have
oracle access to $s$ and $\ell$ at the outset of a classification
round. As such, its expected cost is a lower bound for the cost of all
algorithmic classification procedures; hence we call it an \emph{optimal strategy}.

The non-algorithmic strategy ``Clairvoyant'' can be turned into an
algorithmic classification strategy, which is described next. It will turn
out that its
cost is only marginally larger than 
that of strategy ``Clairvoyant''.

\begin{strategy}[``Count'']
  When classifying the $i$th element, for $1\le i \le n-2$, proceed as
  follows: If $s_{i - 1} \geq \ell_{i - 1}$, compare with $p$
  first, otherwise compare with $q$ first.
\end{strategy}
The number of additional comparisons of this strategy is
called $\Act_n$, its cost $\Pct_n$.

No algorithmic solution for the classification 
problem can have cost smaller than ``Clairvoyant''.
Strategy ``Count'' is algorithmic.
Thus any cost-minimal algorithmic classification procedure has cost between
$\E{\Pcv_n}$ and $\E{\Pct_n}$, and a precise analysis of both
will lead to good lower and upper bounds for the cost of such a procedure.
It was shown in \cite{AumullerD15} that $\E{\Pct_n} -
\E{\Pcv_n} = O(\log n)$ and that, as a consequence, both
strategies lead to dual-pivot quicksort algorithms that use $\frac95n
\log n + O(n)$ comparisons on average. 
In the following, we carry out a precise analysis of $\E{\Pcv_n}$ and
$\E{\Pct_n}$, which will make it possible to determine the
expected comparison count of an optimal dual-pivot quicksort algorithm
up to $0.28n$.
	
\begin{lemma}\label{lem:additional:comparisons:clairvoyant:count}
	\begin{enumerate}[(a)]
  \item The expected number of additional comparisons of strategy
    ``Clairvoyant'' is
    \begin{equation*}
      \E{\Acv_n} = \frac{n}{6} - \frac{7}{12} + \frac{1}{4(n-\iverson*{$n$ even})}
      - \E{X^\searrow_n}.
    \end{equation*}
    
  \item The expected number of additional comparisons of strategy
    ``Count'' is
    \begin{equation*}
      \E{\Act_n} = \frac{n}{6} - \frac{7}{12} + \frac{1}{4(n-[n\text{ even}])}
      + \E{X^\nearrow_{n}}.
    \end{equation*}
  \end{enumerate}
\end{lemma}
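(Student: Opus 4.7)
The plan is to encode the classification process as a lattice path in the variable-length model of Section~\ref{sec:lattice-paths-N} and translate additional comparisons into path-level counts. For a uniformly random permutation of $\{1,\dots,n\}$, let $P<Q$ denote the positions of the two pivots; then $s=P-1$ small and $\ell=n-Q$ large elements appear (with the medium elements interspersed) in positions $2,\dots,n-1$ in uniformly random order. Suppressing the mediums and encoding each small (respectively large) element as a $-1$ (respectively $+1$) step starting from height $d_0=s-\ell$ produces a lattice path $(d_i)_{i=0}^{N'}$ from $(0,d_0)$ to $(N',0)$ with $N'=s+\ell=n-1-(Q-P)$. A direct check against Lemma~\ref{lem:dist-N} shows the joint distribution of $(N',d_0)$ reproduces exactly the probabilistic model of Section~\ref{sec:lattice-paths-N}, so $X^\nearrow_n$ and $X^\searrow_n$ count the up-to-zero and down-from-zero situations on \emph{this} path.

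For Clairvoyant the rule ``bet $p$ iff $d_{i-1}\ge 0$'' means an additional comparison occurs at step $i$ iff the step increases $|d|$ \emph{except} in the one case $d_{i-1}=0$ with step $-1$, where Clairvoyant correctly bets $p$ on a small element yet $|d|$ still rises from $0$ to $1$. Since $|d|$ starts at $|d_0|$, ends at $0$, and changes by $\pm 1$ at every step, the total number of $|d|$-increasing steps equals $\tfrac12(N'-|d_0|)$, and the exceptional steps are exactly the down-from-zero situations. This yields the pointwise identity
\begin{equation*}
\Acv_n=\tfrac12(N'-|d_0|)-X^\searrow_n.
\end{equation*}
Taking expectations, using $\E{N'}=\tfrac{2(n-2)}{3}$ by a direct sum with Lemma~\ref{lem:dist-N}, and evaluating $\E{|d_0|}=\tfrac{2n-1}{6}-\tfrac{1}{2(n-\iverson*{$n$ even})}$ by conditioning on $N'$ (given $N'=n'$, $d_0$ is uniform on $\{-n',-n'+2,\dots,n'\}$, so its expected absolute value is an elementary double sum) delivers~(a) after collecting terms.

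For Count I exploit input-reversal symmetry: since the random input has the same distribution as its reversal, $\E{\Act_n}$ equals the expected number of additional comparisons when Count is applied to the reversed input, i.e.\ when it processes the original input from right to left. Translating the Count rule into this setting, at original position $j$ Count-on-reversed bets $p$ iff $d_j\ge 0$, whereas Clairvoyant bets $p$ iff $d_{j-1}\ge 0$. A case analysis on the four possible pairs $(d_{j-1},d_j)=(d_{j-1},d_{j-1}\pm 1)$ shows the two guesses agree except precisely in the cases $(d_{j-1},d_j)=(-1,0)$ (large element at an up-to-zero situation) and $(d_{j-1},d_j)=(0,-1)$ (small element at a down-from-zero situation), in each of which Clairvoyant is correct while Count-on-reversed is wrong. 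Hence pointwise
\begin{equation*}
A_{\text{Count-on-reversed}}=\Acv_n+X^\nearrow_n+X^\searrow_n,
\end{equation*}
and taking expectations with the formula from~(a) proves~(b); the $X^\searrow_n$ contributions cancel.

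The main technical care is in the $|d|$-balance accounting of the Clairvoyant step: the asymmetric treatment of the boundary case $d_{i-1}=0$ is exactly what turns a trivial parity identity into the clean pointwise equality $\Acv_n=\tfrac12(N'-|d_0|)-X^\searrow_n$. Once this is in place, part~(b) follows essentially for free from the symmetry plus the short case check, with no further computation required.
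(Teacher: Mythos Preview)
Your proof is correct and follows essentially the same route as the paper's: reduce to the small/large lattice path, identify the Clairvoyant additional comparisons as $\min(s,\ell)-X^\searrow_n$, and for Count use the input-reversal trick to compare Count-on-reversed with Clairvoyant element by element, picking up exactly $X^\nearrow_n+X^\searrow_n$ extra.

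The one genuine difference is in the bookkeeping for part~(a). The paper splits into the cases $s\ge\ell$ and $\ell>s$ and balances up-steps against down-steps separately above and below the axis to arrive at $\min(s,\ell)-z^\searrow_{n'}$. Your $|d|$-balance argument is cleaner: you observe directly that the additional comparisons are the $|d|$-increasing steps minus the down-from-zero steps, and the telescoping identity $I=\tfrac12(N'-|d_0|)=\min(s,\ell)$ drops out without a case split. This buys a shorter argument and makes the pointwise identity $\Acv_n=\tfrac12(N'-|d_0|)-X^\searrow_n$ transparent; the paper's version buys a slightly more concrete step-count picture (the quantities $\#\mathrm{up}_{\ge0}$, $\#\mathrm{down}_{<0}$, etc.), which is then reused verbatim in part~(b). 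Your case check for~(b) is equivalent to the paper's; both identify the discrepancy between the two strategies as exactly the up-to-zero and down-from-zero transitions.
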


\begin{proof}[ideas]
(The full proof can be found in Appendix~\ref*{sec:appendix:part-costs}. 
A different proof of a related statement was given in~\cite{AumullerD15}.)

(a) Noticing that medium elements can be ignored,
we consider a reduced input of size $n' = s + \ell$,
consisting only of the $s$ small and the $\ell$ large elements in the input.
For $0 \leq i \leq n'$ let $s_i' = s - s_i$ and $\ell_i' = \ell - \ell_i$ 
denote the number of small respectively large elements
left unclassified after step $i$. 
Then $\{(i,s_i'-\ell_i')\mid 0 \leq i \leq n'\}$ is a lattice
path with distribution (including the distribution of $n'$)
exactly as in Section~\ref{sec:lattice-paths-N},
so that the results on the expected number of zeros on such paths given there may be applied.
We also note that the sign of $s_{i-1}'-\ell_{i-1}'$ decides whether
the $i$th element to be classified is compared with $p$ first or with $q$ first, 
and that additional comparisons correspond to steps on the path that 
lead away from the horizontal axis, excepting down-from-zero steps
(due to the asymmetry in treating the situation $s-s_i = \ell - \ell_i$
in strategy ``Clairvoyant''). For the number of steps away from the
horizontal axis one easily finds the expression $\min(s,\ell)$.
Averaging over all choices for $n'$ and the two pivots leads to the formula claimed in (a).

(b) Now assume strategy ``Count'' is applied to 
$n' = s + \ell$ elements. 
The set $\{(i,s_i-\ell_i) \mid 0 \leq i \leq n'\}$
forms a lattice path that starts at $(0,0)$ and ends at $(n',s-\ell)$.
It can be shown that reflection with respect to the vertical line at $n'/2$ 
maps these paths in a probability-preserving way to the paths from from (a) (and thus from our model), 
and it turns out that additional comparisons in this strategy correspond to steps away from
the horizontal axis and up-to-zero steps.
As in (a), averaging leads to the formula claimed in (b).
\end{proof}

\def\appendixpartcost{
\begin{proof}
We start with Part (a). A different proof of a related statement was given in~\cite{AumullerD15}.
Since the pivots are $p$ and $q$ with $p<q$,
there are $s=p-1$ small, $\ell=n-q$ large, and $p-q-1$ medium elements.
Omit all medium elements (which require two comparisons) to obtain a reduced sequence
$(b_1,\dots,b_{n'})$ of elements to be classified, with $n'=s+\ell$.
Note that the distribution of $n'$ is exactly that of the random variable $N'$
in Section~\ref{sec:lattice-paths-N}.
Further, if $n'$ is given, $s$ is uniform in $\{0,\ldots,n'\}$
and $s-\ell$ is uniform in $\{x\in\Z\mid |x|\le n', x\equiv n'\pmod 2\}$.
By the assumption that the input is in random order
it is irrelevant in which order the elements are classified; 
so we may assume the order is $b_1,\dots,b_{n'}$.
For $0 \leq i \leq n'$ let $s_i'$ and $\ell_i'$ 
denote the number of small respectively large elements
in $(b_{i + 1}, \ldots, b_{n'})$,
and let $d_i=s_i' - \ell_i'$.
Then the sequence 
\begin{equation*}
  ((0, d_0), (1, d_1), \ldots, (n', 0))
\end{equation*}
is a lattice path of length $n'$ as defined in Section~\ref{sec:description}.
(Figure~\ref{fig:qs:lattice:walk:ex} depicts such a path.)
When $s$ and $\ell$ are given, the path is determined by the 
random order of the small and large elements in $(b_1,\dots,b_{n'})$, 
and hence all these paths have the same probability $1/\binom{s+\ell}{s}$.
We now note that the path contains the full information about the
additional comparisons carried out by strategy ``Clairvoyant'':
For $1\le i \le n'$, an edge from $(i-1,d_{i-1})$ to $(i,d_i)$ will correspond to an additional
comparison occurring when classifying $b_i$ if and only if either
\begin{itemize}
	\item $d_{i-1} = s_{i-1}' - \ell_{i-1}' \ge 0$ (so that $b_i$ is compared with $p$ first)
and $d_i > d_{i-1}$ (which means that $b_i$ is a large element), or
\item $d_{i-1} = s_{i-1}' - \ell_{i-1}' < 0$ (so that $b_i$ is compared with $q$ first) 
and $d_i < d_{i-1}$ (which means that $b_i$ is a small element).
\end{itemize}
Thus, if we let
\begin{align*}
\#\mathrm{up}_{\ge0} &= \#(\text{steps going up, starting on or above the horizontal axis}),\\
\#\mathrm{down}_{<0} &= \#(\text{steps going down, starting strictly below the horizontal axis}),
\end{align*}
then the additional cost equals $\#\mathrm{up}_{\ge0} + \#\mathrm{down}_{<0}$.

Now assume $s\ge \ell$ (see Figure~\ref{fig:qs:lattice:walk:ex}), and let
\begin{align*}
\#\mathrm{down}_{>0} &= \#(\text{steps going down, starting strictly above the horizontal axis}).
\end{align*}
Since the path ends at $(n',0)$ and up and down steps below the horizontal axis cancel, 
we have $s - \ell = d_ 0 =  \#\mathrm{down}_{>0}  - \#\mathrm{up}_{\ge0}$, hence 
$\#\mathrm{up}_{\ge0} =   \#\mathrm{down}_{>0} - (s - \ell)$.
Thus, the additional cost is 
\begin{equation*}
  \#\mathrm{down}_{>0} + \#\mathrm{down}_{<0} -(s-\ell).
\end{equation*}
Let $z_{n'}^\searrow$ be the number of steps that go down-from-zero 
(cf.\@ Appendix~\ref*{sec:appendix:more-zeros}).
The total number of steps that go down is 
$s = \#\mathrm{down}_{>0} + \#\mathrm{down}_{<0} + z_{n'}^\searrow$,
so that the additional cost turns out to be
\begin{equation*}
  s - z_{n'}^\searrow - (s-\ell) = \ell - z_{n'}^\searrow.
\end{equation*}
In a similar way one sees that the assumption $\ell > s$ leads to  $s - z_{n'}^\searrow$ additional comparisons.
Combining both cases, the number of additional comparisons is $\min(s,\ell) - z_{n'}^\searrow$.
Recalling the definition of $X_{n}^\searrow$ from Section~\ref{sec:lattice-paths-N}, 
averaging over all possible pivots $p<q$, and using $s=p-1$ and $\ell=n-q$, 
we obtain
\begin{equation*}
      \E{\Acv_n} = \frac{1}{\binom{n}{2}}
      \biggl(\sum_{1 \leq p < q \leq n} \min(p-1, n - q)\biggr)
      - \E{X^\searrow_n}.
    \end{equation*}
The first summand evaluates to 
$
\frac{n}{6} - \frac{7}{12} + \frac{1}{4(n-[n\text{ even}])}
$. 


\begin{figure}
\scalebox{0.85}{
\begin{tikzpicture}[yscale=1,xscale=0.8]
    \draw[->] (0, 0) to (0, 6.25);
    \draw[->] (0, 3) to (16.5, 3);
    \node at (0.5, 2.75) {$1$};
    \node at (15.5, 2.75) {$n'$};
    \node (l2) at (16.75, 3) {$i$};
    \node (l1) at (-0, 6.5) {$s'_i - \ell'_i$};

    \foreach \x in {1, 2, 3, 4, 5, 6, 7, 8, 9, 10, 11, 12, 13, 14, 15, 16}
        \draw (\x, 2.85) to (\x, 3.15);
    
        \foreach \x/\label in {0/-3, 1/-2, 2/-1, 3/\phantom{-}0, 4/\phantom{-}1, 5/\phantom{-}2, 6/\phantom{-}3}
    \draw (-0.1, \x) to node[xshift=-0.5cm] {$\label$} (0.1, \x);

    \foreach [count = \cnt] \p in {(1,3),(2,2), (3,1), (4,0), (5, -1), (6, 0), (7, - 1), (8, -2), (9, -1), 
                (10, 0), (11, 1), (12, 0), (13, 1), (14, 2), (15, 1), (16, 0)}
                \node[yshift=3cm, circle, fill, inner sep = 0, minimum size = 0.15cm] (p\cnt) at \p {};

                \draw (0, 5) to node[pos=0.5, diamond, draw, dotted, fill=white, inner sep = 0.05cm] {$\underline{\lambda}$} (p1);

                \foreach \lastpt/\pt/\label/\shape in {
                  1/2/$\sigma$/, 2/3/$\sigma$/, 3/4/$\sigma$/, 4/5/$\sigma$/draw, 5/6/$\lambda$/draw,
                  6/7/$\sigma$/draw, 7/8/$\underline{\sigma}$/draw, 8/9/$\lambda$/, 9/10/$\lambda$/draw,
                  10/11/$\underline{\lambda}$/draw, 11/12/$\sigma$/,
                  12/13/$\underline{\lambda}$/draw, 13/14/$\underline{\lambda}$/draw,
                  14/15/$\sigma$/, 15/16/$\sigma$/}
    \draw (p\lastpt) to node[pos=0.5, fill=white, diamond, \shape,
                             dotted, inner sep=0.05cm]
                  {\label}  (p\pt) ;
\end{tikzpicture}}
    \caption{Example run of strategy ``Clairvoyant'' and ``Count'' on input ($\lambda$, $\sigma$, $\sigma$, $\sigma$, $\sigma$, $\lambda$, $\sigma$, $\sigma$, $\lambda$, $\lambda$, $\lambda$, $\sigma$, $\lambda$, $\lambda$, $\sigma$, $\sigma$) and the reversed input, respectively. The $s=9$ small elements 
		in $(b_1,\ldots,b_{16})$ are represented 
		as ``$\sigma$'', the $\ell=7$ large ones as ``$\lambda$''. 
		The horizontal axis shows the input position $i$, the vertical axis shows the difference $d_i=s_i' - \ell_i'$ of small and large elements in the part $(b_{i+1},\ldots,b_{16})$ of the input. Underlined entries mark elements where an additional comparison occurs when strategy ``Clairvoyant'' is used from left to right. 
This strategy makes five additional comparisons on this input. 
  A diamond marks elements where an additional comparison occurs when strategy ``Count'' is used when treating the input from right to left. This strategy makes nine additional comparisons on this reversed input.}
\label{fig:qs:lattice:walk:ex}
\end{figure}




We continue with Part (b).  
Assume that pivots $p$ and $q$ and a reduced input $(b_1, \dots,b_{n'})$ 
is produced as in Part (a). 
For analyzing the classification strategy ``Count'', we wish to 
utilize our knowledge about ``Clairvoyant''. To this end, we use a reflection
trick and assume the input is treated in the reverse order $b_{n'}, \dots,b_1$.
(This is harmless since the order in which the elements are treated is irrelevant anyway.)
As in the specification of ``Count'', 
let $s_i$ and $\ell_i$ denote the number of small and large elements that have been seen
after the $i$th classification step.
Here this is the number of small and large elements in $\{b_{n'-i+1},\dots,b_{n'}\}$;
hence we have $s_i=s_{n'-i}'$ and $\ell_i=\ell_{n'-i}'$, and $s_i -\ell_i = d_{n'-i}$. 
Keeping track of $s_i-\ell_i$ for $i=0,\dots,n'$ gives rise to a lattice path,
which we imagine as running in the opposite direction from our standard paths:
It starts at $(n',0)$ and ends at $(0,s-\ell)$. The point reached after step $i$ 
is $(n' - i, s_i - \ell_i)=(n' - i,d_{n'-i})$. 
(An illustration is obtained by traversing the path 
in Figure~\ref{fig:qs:lattice:walk:ex} from right to left.)
Thus, the path induced by applying ``Clairvoyant'' to $(b_1, \dots,b_{n'})$ 
and the path induced by applying ``Count'' to $(b_{n'}, \dots,b_1)$ are the same,
they are just traversed in opposite directions.
The probability with which a path appears is the same in both situations.

Now we wish to determine the additional cost of the run of ``Count'' on
$(b_{n'}, \dots,b_1)$. 
For $1\le i \le n'$, reading from right to left,
an edge from $(n'-i+1,s_{n'-i+1}-\ell_{n'-i+1})$ to $(n'-i,s_{n'-i}-\ell_{n'-i})$ 
will correspond to an additional comparison needed when classifying $b_{n'-i+1}$ if and only if either
\begin{itemize}
	\item $ 0 \le s_{i-1} - \ell_{i-1} = d_{n'-i+1}$ (so that $b_{n' - i + 1}$ is compared with $p$ first)
and $d_{n'-i} < d_{n'-i+1}$ (which means that $b_{n'-i + 1}$ is a large element), or
\item $ 0 > s_{i-1} - \ell_{i-1}  = d_{n'-i+1}$ (so that $b_{n' - i + 1}$ is compared with $q$ first)
and $d_{n'-i} > d_{n'-i+1}$ (which means that $b_{n'-i + 1}$ is a small element).
\end{itemize}
We now interpret these relations in the standard direction
from left to right. Additional comparisons correspond 
to steps in the path 
\begin{itemize}
\item from $(n'-i,d_{n'-i})$ to $(n'-i+1,d_{n'-i+1})$ with $d_{n'-i+1} \ge 0$
and $d_{n'-i} < d_{n'-i+1}$ and 
\item from $(n'-i,d_{n'-i})$ to $(n'-i+1,d_{n'-i+1})$ with $d_{n'-i+1} < 0$
and $d_{n'-i} > d_{n'-i+1}$.
\end{itemize}
If we let
\begin{align*}
\#\mathrm{up}_{\ge-1} &= \#(\text{steps going up, ending on or above the horizontal axis}),\\
\#\mathrm{down}_{\le0} &= \#(\text{steps going down, starting on or below the horizontal axis}),
\end{align*}
then the additional cost equals $\#\mathrm{up}_{\ge-1} + \#\mathrm{down}_{<0}$, see
Figure~\ref{fig:qs:lattice:walk:ex} for an example.

Comparing with what we obtained in (a) for ``Clairvoyant'' we see that 
the additional cost of ``Count'' is the additional cost of ``Clairvoyant''
plus the number of steps from $(n'-i,-1)$ to $(n'-i+1,0)$ (in the notation of 
Sections \ref{sec:more-zeros} and~\ref{sec:lattice-paths-N} 
these are ``up-to-zero steps'', their number being denoted by $z_{n'}^\nearrow$)
and the number of steps from $(n'-i,0)$ to $(n'-i+1,-1)$
(these are ``down-from-zero steps'', their number being denoted by $z_{n'}^\searrow$).
Since the additional cost for ``Clairvoyant'' was
$ \min(p-1,n-q) - z_{n'}^\searrow$, for ``Count'' we get a cost of 
\begin{equation*}
  \min(p-1,n-q) + z_{n'}^\nearrow.
\end{equation*}
Averaging over all $p<q$ as in (a) finally yields
\begin{equation*}
      \E{\Act_n} = \frac{1}{\binom{n}{2}}
      \biggl(\sum_{1 \leq p < q \leq n} \min(p-1, n - q)\biggr)
      + \E{X^\nearrow_n},
    \end{equation*}
and evaluating the sum as in Part (a) finishes the proof.
\end{proof}

}

Lemma~\ref{lem:additional:comparisons:clairvoyant:count} allows  us to 
give an exact expression for the average number of comparisons of ``Clairvoyant'' 
and ``Count'' in a single partitioning step.
The expressions for $\E{\Pcv_n}$ and $\E{\Pcv_n}$ are obtained by adding the expected number of necessary comparisons $\frac43 (n - 2) + 1$ to the cost terms in Lemma~\ref{lem:additional:comparisons:clairvoyant:count} (see Appendix~\ref*{sec:appendix:part-costs}).

\def\appendixpcost{
\begin{lemma}\label{lem:expected:comparisons:clairvoyant:count}
Let $n\ge 2$.
\begin{enumerate}[(a)]
  \item The expected number of comparisons of strategy ``Clairvoyant''
  is
  \begin{equation*}
    \E{\Pcv_n} =
    \frac32 n
    - \frac{9}{4}
    + \frac{1}{4(n - \iverson*{$n$ even})}
    - \E{X^\searrow_n}.
  \end{equation*}

  \item The expected number of comparisons of strategy ``Count'' is
    \begin{align}
      \E{\Pct_n} = \frac32 n
      - \frac{9}{4}
      + \frac{1}{4(n - \iverson*{$n$ even})}
      + \E{X^\nearrow_{n}}.
      \label{eq:comparisons:count:down:from:zero}
    \end{align}
\end{enumerate}
\end{lemma}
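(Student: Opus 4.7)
My plan is to derive both formulas by adding the expected number of \emph{necessary} comparisons (which is the same for every classification strategy) to the expected number of \emph{additional} comparisons, for which Lemma~\ref{lem:additional:comparisons:clairvoyant:count} already provides exact expressions in terms of $\E{X^\searrow_n}$ and $\E{X^\nearrow_n}$.

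The first step is to recall from \eqref{eq:part-necessary} that on average $\tfrac{4}{3}(n-2) + 1$ necessary comparisons arise per partitioning step: every medium element needs two comparisons, each small or large element needs one forced comparison against its bordering pivot, and the one comparison between $a_1$ and $a_n$ used to identify $p$ and $q$ contributes the additive $+1$. This quantity is independent of the classification strategy and hence applies to both ``Clairvoyant'' and ``Count''. By linearity of expectation, $\E{\Pcv_n} = \tfrac{4}{3}(n-2) + 1 + \E{\Acv_n}$ and $\E{\Pct_n} = \tfrac{4}{3}(n-2) + 1 + \E{\Act_n}$.

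Next I would substitute the expressions from Lemma~\ref{lem:additional:comparisons:clairvoyant:count} and perform the arithmetic simplification. For strategy ``Clairvoyant'' this gives
\begin{equation*}
  \E{\Pcv_n} = \tfrac{4}{3}(n-2) + 1 + \tfrac{n}{6} - \tfrac{7}{12}
  + \tfrac{1}{4(n-\iverson*{$n$ even})} - \E{X^\searrow_n},
\end{equation*}
and combining the linear terms $\tfrac{4n}{3} + \tfrac{n}{6} = \tfrac{3n}{2}$ together with the constant terms $-\tfrac{8}{3} + 1 - \tfrac{7}{12} = -\tfrac{9}{4}$ yields exactly the claimed expression. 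The computation for ``Count'' is identical except that $-\E{X^\searrow_n}$ is replaced by $+\E{X^\nearrow_n}$.

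Since both ingredients, namely \eqref{eq:part-necessary} and Lemma~\ref{lem:additional:comparisons:clairvoyant:count}, are already in place, there is no real obstacle here; the proof reduces to elementary arithmetic. The conceptual work has been done in the reflection argument used to prove Lemma~\ref{lem:additional:comparisons:clairvoyant:count}, and here one merely collects the contributions.
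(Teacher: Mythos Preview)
Your proposal is correct and follows essentially the same approach as the paper: the paper's proof is the one-line remark that $\E{\Pcv_n}$ and $\E{\Pct_n}$ are obtained by adding the expected number of necessary comparisons $\tfrac{4}{3}(n-2)+1$ from \eqref{eq:part-necessary} to the additional-comparison expressions in Lemma~\ref{lem:additional:comparisons:clairvoyant:count}. You spell out the arithmetic explicitly, but the argument is identical.
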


\begin{proof}
The expressions for $\E{\Pcv_n}$ and $\E{\Pcv_n}$ are obtained by adding the expected number of necessary comparisons $\frac43 (n - 2) + 1$ to the cost terms in Lemma~\ref{lem:additional:comparisons:clairvoyant:count}.
\end{proof}
}


\section{Main Results and their Asymptotic Aspects}
\label{sec:costs-main-asy}

In this section we give precise formulations of our main results. 
We use the partitioning cost from the previous section to
calculate the expected number of comparisons of the two dual-pivot quicksort variants
obtained by using classification strategies ``Clairvoyant'' and ``Count'', respectively.
We call these sorting algorithms ``Clairvoyant'' and ``Count'' again. Recall
that ``Clairvoyant'' uses an oracle and is comparison-optimal,
and that ``Count'' is its algorithmic version. We validated our main results
in experiments which can be found in Appendix~\ref*{sec:empirical:validation}.
They show that the error term $\Oh[empty]{n^{-4}}$ is small
already for real-life input sizes $n$, and that the linear term
has a big influence even for larger $n$.

\begin{theorem}\label{thm:clairvoyant:cost}
  For $n\ge 4$, the average number of comparisons in the comparison-optimal dual-pivot
  quicksort algorithm ``Clairvoyant'' (with oracle) is
  \begin{equation*}
    \E{\Ccv_n} = 
    \frac{9}{5}nH_n + \frac{1}{5}n\Halt_n -\frac{89}{25}n + \frac{77}{40}H_n+\frac{3}{40}\Halt_n+\frac{67}{800}-\frac{(-1)^n}{10}+ r_n
    \end{equation*}
   where
   \begin{equation*}
     r_n = \frac{\iverson*{$n$ even}}{320}\Bigl(\frac{1}{n-3}+\frac{3}{n-1}\Bigr)-\frac{\iverson*{$n$ odd}}{320}\Bigl(\frac{3}{n-2}+\frac{1}{n}\Bigr).
   \end{equation*}
\end{theorem}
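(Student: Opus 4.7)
The plan is to apply the generating function machinery already developed in the appendix. By Lemma~\ref{le:integration},
\begin{equation*}
  C^{\mathrm{cv}}(z)=(1-z)^3\int_{0}^z (1-t)^{-6}\int_{0}^t (1-s)^3 (P^{\mathrm{cv}})''(s)\,ds\,dt,
\end{equation*}
where $P^{\mathrm{cv}}(z)=\sum_{n\ge 0}\E{\Pcv_n}z^n$. Thus it suffices to compute $P^{\mathrm{cv}}(z)$ in closed form, run it through this integral operator, and extract the coefficient of $z^n$.

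First I would assemble $P^{\mathrm{cv}}(z)$ using Lemma~\ref{lem:expected:comparisons:clairvoyant:count}(a). The polynomial and $\frac{1}{1-z}$-part is immediate. The parity-dependent summand $\frac{1}{4(n-\iverson*{$n$ even})}$ (for $n\ge 2$) contributes
\begin{equation*}
  \tfrac14\sum_{n\ge 2}\frac{z^n}{n-\iverson*{$n$ even}}
  =\tfrac14\bigl((1+z)\artanh(z)-z\bigr),
\end{equation*}
since $\sum_{n\ge 1}\iverson*{$n$ odd}\,z^n/n=\artanh(z)$ and $\sum_{n\ge 2}\iverson*{$n$ even}\,z^n/(n-1)=z\artanh(z)$. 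Subtracting the explicit generating function for $\E{X^\searrow_n}$ given in Proposition~\ref{pro:dual-pivot-expect-exact} then writes $P^{\mathrm{cv}}(z)$ as a linear combination of $\frac{\artanh(z)}{1-z}$, $(1-z)^{-1}$, $(1-z)^{-2}$, $\artanh(z)$ and a low-degree polynomial in $z$.

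Next I would push this through the integral operator. After two differentiations and multiplication by $(1-s)^3$, the integrand becomes a linear combination of $(1-s)^{-k}$ for small $k$, of $(1-s^2)^{-1}$ (from $\artanh'(s)=(1-s^2)^{-1}$), and of $(1-s)^j\artanh(s)$. All antiderivatives are elementary, producing terms involving $\log(1-t)$, $\log(1+t)$ (hence $\artanh(t)$), and $(1-t)^{-k}$. Multiplying by $(1-t)^{-6}$ and integrating a second time yields the same kinds of functions, now with denominators up to $(1-z)^3$. The constants of integration are pinned down by $C^{\mathrm{cv}}(0)=(C^{\mathrm{cv}})'(0)=0$, which is built into the formula of Lemma~\ref{le:integration}. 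The outcome is that $C^{\mathrm{cv}}(z)$ is a linear combination of $\frac{-\log(1-z)}{(1-z)^2}$, $\frac{\artanh(z)}{(1-z)^2}$, $(1-z)^{-k}$ for $k\in\{1,2,3\}$, $\artanh(z)$, $\log(1\pm z)$, and a polynomial in $z$.

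Finally I would extract $[z^n]$ term by term using standard identities, e.g.
\begin{equation*}
  [z^n]\frac{-\log(1-z)}{(1-z)^2}=(n+1)H_n-n,\qquad
  [z^n]\frac{\artanh(z)}{1-z}=\Hodd_n,
\end{equation*}
together with $[z^n]\artanh(z)=\iverson*{$n$ odd}/n$, $[z^n]\log(1+z)=-(-1)^n/n$, and polynomial expansions of $(1-z)^{-k}$. The relations $\Halt_n=H_n-2\Hodd_n$ and $H_{\floor{n/2}}=2(H_n-\Hodd_n)$ stated in Section~\ref{sec:results} allow one to rewrite the $\Hodd_n$-parts in the mixed $\{H_n,\Halt_n\}$-form appearing in the theorem; the small rational remainder $r_n$ emerges from the $\frac{\artanh(z)}{(1-z)^k}$ contributions once odd and even $n$ are separated. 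The main obstacle is the bookkeeping through two integrations and the final coordinate change between the three harmonic-number representations; a useful sanity check is that the leading behaviour must reproduce the $\tfrac{9}{5}n\log n + O(n)$ bound of~\cite{AumullerD15}, and that specialising to small $n$ (where $\E{\Ccv_n}$ can be computed by hand via the recurrence~\eqref{eq:recurrence}) matches the claimed closed form.
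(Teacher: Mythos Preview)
Your proposal is correct and follows exactly the paper's route: assemble $P^{\mathrm{cv}}(z)$ from Lemma~\ref{lem:expected:comparisons:clairvoyant:count}(a) and Proposition~\ref{pro:dual-pivot-expect-exact}, apply the double integral of Lemma~\ref{le:integration}, and read off coefficients via the standard generating-function identities for $H_n$ and $\Halt_n$. One small correction to your description: the remainder $r_n$ does not come from the $\artanh(z)/(1-z)^k$ pieces (those produce the $H_n$- and $\Halt_n$-terms) but from a term $-(1-z)^3\artanh(z)/320$ that survives the two integrations, whose $n$th coefficient is exactly the parity-split combination of $1/(n-3)$, $1/(n-2)$, $1/(n-1)$, $1/n$ appearing in $r_n$.
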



\begin{corollary}\label{cor:clairvoyant:cost:asy}
  The average number of comparisons in the
  algorithm ``Clairvoyant'' is
  \begin{equation*}
    \E{\Ccv_n} = 
    \frac95n \log n + A n + B \log n + C
    + \frac{D}{n} + \frac{E}{n^2} + \frac{F\iverson*{$n$ even} + G}{n^3}
    + \Oh[Big]{\frac{1}{n^4}}
  \end{equation*}
  with
  \begin{align*}
    A &= \frac95\gamma
    - \frac{1}{5} \log 2
    - \frac{89}{25}
    = -2.6596412392892\dots, &
    B &= \frac{77}{40}
    = 1.925, \\
    C &= \frac{77}{40}\gamma
    - \frac{3}{40}\log 2
    + \frac{787}{800}
    = 2.042904116393455\dots, &
    D &= \frac{13}{16} = 0.8125, \\
    E &= - \frac{77}{480} = -0.1604166\dots,\qquad
    F = \frac{1}{8} = 0.125, &
    G &= - \frac{19}{400} = -0.0475,
  \end{align*}
  asymptotically as $n$ tends to infinity.
\end{corollary}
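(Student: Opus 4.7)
The plan is to substitute the asymptotic expansions from Lemma~\ref{lem:harmonic-asy} into the closed-form expression of Theorem~\ref{thm:clairvoyant:cost}, combine them with a Taylor expansion of the parity-dependent rational tail $r_n$, and finally collect contributions by powers of $1/n$.

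First I would use Lemma~\ref{lem:harmonic-asy}, extended by one order where needed. The extension is routine once one observes that the $1/n^3$-coefficients of both $H_n$ and $\Halt_n$ vanish while the $1/n^4$-coefficients are $\frac{1}{120}$ and $\frac{(-1)^n}{8}$ respectively (from the Euler--Maclaurin expansion of $H_n$ together with $\Halt_n=H_n-2\Hodd_n$, or from the SageMath worksheet referenced in the lemma). Multiplying by the prefactors appearing in Theorem~\ref{thm:clairvoyant:cost} produces
\[
  \tfrac{9}{5}nH_n = \tfrac{9}{5}n\log n + \tfrac{9}{5}\gamma\, n + \tfrac{9}{10} - \tfrac{3}{20n} + \tfrac{3}{200 n^3} + \Oh{1/n^4},
\]
\[
  \tfrac{1}{5}n\Halt_n = -\tfrac{1}{5}n\log 2 + \tfrac{(-1)^n}{10} - \tfrac{(-1)^n}{20n} + \tfrac{(-1)^n}{40\, n^3} + \Oh{1/n^4},
\]
together with analogous (lower-order) expansions of $\tfrac{77}{40}H_n$ and $\tfrac{3}{40}\Halt_n$.

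Second I would expand the rational tail $r_n$ via $\tfrac{1}{n-k} = \tfrac{1}{n}\sum_{j\ge 0}(k/n)^j$. Direct calculation yields
\[
  \tfrac{1}{n-3}+\tfrac{3}{n-1} = \tfrac{4}{n}+\tfrac{6}{n^2}+\tfrac{12}{n^3}+\tfrac{30}{n^4}+\Oh{1/n^5},\qquad \tfrac{3}{n-2}+\tfrac{1}{n} = \tfrac{4}{n}+\tfrac{6}{n^2}+\tfrac{12}{n^3}+\tfrac{24}{n^4}+\Oh{1/n^5}.
\]
Through order $1/n^3$ the two brackets coincide, so with $\iverson*{$n$ even}-\iverson*{$n$ odd}=(-1)^n$ one obtains $r_n = (-1)^n\bigl(\tfrac{1}{80n}+\tfrac{3}{160n^2}+\tfrac{3}{80n^3}\bigr)+\Oh{1/n^4}$. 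The asymmetry at order $1/n^4$ (the difference $30-24=6$) is exactly what, after converting $(-1)^n=2\iverson*{$n$ even}-1$ in the collected $1/n^3$ term, yields the parity-dependent coefficient $F\iverson*{$n$ even}$ of the corollary.

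Third, all contributions are summed and matched order by order. At orders $n\log n$ and $n$ only $\tfrac{9}{5}nH_n$, $\tfrac{1}{5}n\Halt_n$, and $-\tfrac{89}{25}n$ contribute, giving $B=\tfrac{77}{40}$ and $A=\tfrac{9}{5}\gamma-\tfrac{1}{5}\log 2-\tfrac{89}{25}$. In the constant term, the $(-1)^n$-pieces of $\tfrac{1}{5}n\Halt_n$ and of $-(-1)^n/10$ cancel, leaving the parity-free $C=\tfrac{77}{40}\gamma-\tfrac{3}{40}\log 2+\tfrac{787}{800}$. At order $1/n$ the $(-1)^n$-pieces from $\tfrac{1}{5}n\Halt_n$, $\tfrac{3}{40}\Halt_n$, and $r_n$ cancel (since $-\tfrac{1}{20}+\tfrac{3}{80}+\tfrac{1}{80}=0$) and the remaining $-\tfrac{3}{20}+\tfrac{77}{80}=\tfrac{13}{16}$ gives $D$; the $1/n^2$ coefficient is handled analogously and collapses to $E=-\tfrac{77}{480}$, while at $1/n^3$ the sum $\tfrac{3}{200}+\tfrac{(-1)^n}{40}+\tfrac{3(-1)^n}{80}=\tfrac{3}{200}+\tfrac{(-1)^n}{16}$ produces $F=\tfrac{1}{8}$ and $G=-\tfrac{19}{400}$ after the conversion $(-1)^n=2\iverson*{$n$ even}-1$. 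The main obstacle is purely bookkeeping: one must carry every expansion one order further than what Lemma~\ref{lem:harmonic-asy} states and systematically convert among $(-1)^n$, $\iverson*{$n$ even}$ and $\iverson*{$n$ odd}$ so that all residual parity dependence consolidates into the single $F\iverson*{$n$ even}/n^3$ summand. As the footnote in Lemma~\ref{lem:harmonic-asy} remarks, this computation can be (and was) mechanized with the asymptotic expansions module of SageMath.
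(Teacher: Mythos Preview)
Your approach is correct and is exactly what the paper does: the paper's proof consists of the single sentence ``Insert the expansions of Lemma~\ref{lem:harmonic-asy} into the explicit representations of Theorems~\ref{thm:clairvoyant:cost} and~\ref{thm:count:cost},'' with the actual arithmetic delegated to the SageMath worksheet. You have carried out that arithmetic by hand, and the intermediate values you display (the expansion of $r_n$, the cancellation of the $(-1)^n$-pieces at orders $1$, $1/n$, and $1/n^2$, and the residual $\tfrac{3}{200}+\tfrac{(-1)^n}{16}$ at order $1/n^3$) are all correct. One minor expository slip: the parity dependence $F\iverson*{$n$ even}$ at order $1/n^3$ does not come from the $1/n^4$ asymmetry of $r_n$ (that asymmetry is absorbed in the $\Oh{1/n^4}$ error) but simply from the fact that the $(-1)^n$-contributions no longer cancel at that order; your subsequent computation gets this right even though the preceding sentence mis-attributes the cause.
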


Before continuing with the second partitioning strategy, let us
make a remark on the (non-)influence of the parity of~$n$. It is noteworthy
that in Corollary~\ref{cor:clairvoyant:cost:asy} no such
influence is visible in the first six terms (down to $1/n^2$); only
from $1/n^3$ on the parity of $n$ appears. This is somewhat unexpected, since a term $(-1)^n$ appears 
in Theorem~\ref{thm:clairvoyant:cost}.


\begin{theorem}\label{thm:count:cost}
  The average number of comparisons in the dual-pivot quicksort
  algorithm ``Count'' is
  \begin{equation*}
    \E{\Cct_n} = 
    \frac{9}{5}nH_n - \frac{1}{5}n\Halt_n -\frac{89}{25}n + \frac{67}{40}H_n-\frac{3}{40}\Halt_n-\frac{83}{800}+\frac{(-1)^n}{10}
    - r_n 
  \end{equation*}
  where $r_n$ is defined in Theorem~\ref{thm:clairvoyant:cost}.
\end{theorem}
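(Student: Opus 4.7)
The plan is to reduce Theorem~\ref{thm:count:cost} to Theorem~\ref{thm:clairvoyant:cost} by solving the quicksort recurrence for the difference of partitioning costs between the two strategies. By Lemma~\ref{lem:expected:comparisons:clairvoyant:count},
\begin{equation*}
\E{\Pct_n} - \E{\Pcv_n} = \E{X^\nearrow_n} + \E{X^\searrow_n}.
\end{equation*}
The recurrence~\eqref{eq:recurrence} is linear in $\E{P_n}$, and the initial values $\E{C_0}=\E{C_1}=0$ are common to both strategies. Hence $\E{\Cct_n} = \E{\Ccv_n} + \widetilde C_n$, where $\widetilde C_n$ is the solution of~\eqref{eq:recurrence} with $\E{P_n}$ replaced by $\widetilde P_n := \E{X^\nearrow_n}+\E{X^\searrow_n}$.

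To evaluate $\widetilde C_n$ I would pass to generating functions. Adding the two closed forms for $\sum_{n\ge 2}\E{X^\nearrow_n}z^n$ and $\sum_{n\ge 2}\E{X^\searrow_n}z^n$ from Proposition~\ref{pro:dual-pivot-expect-exact} expresses
\begin{equation*}
\widetilde P(z) = \frac{\artanh(z)}{1-z}-\frac{3z^2}{4(1-z)}-\frac{z+3}{4}\artanh(z)-\frac{z}{4}
\end{equation*}
as a combination of rational functions in $z$ and $\artanh(z)$. Applying the integration formula of Lemma~\ref{le:integration},
\begin{equation*}
\widetilde C(z) = (1-z)^3\int_0^z(1-t)^{-6}\int_0^t(1-s)^3\widetilde P''(s)\,ds\,dt,
\end{equation*}
and using $\artanh(z)=\tfrac12\log\tfrac{1+z}{1-z}$ together with standard antiderivatives of $(1-z)^{-k}\log(1\pm z)$, $\widetilde C(z)$ remains within the class of rational functions plus rational multiples of $\artanh(z)$ and $\log(1\pm z)$.

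The main obstacle will be the partial-fraction decomposition and the bookkeeping during coefficient extraction. Terms of the form $\artanh(z)/(1-z)^k$ contribute $nH_n$, $n\Halt_n$, $H_n$ and $\Halt_n$ once expanded; purely rational factors $1/(1-z)^k$ contribute polynomials in $n$; and the factors $1/(1+z)^k$ that appear from the double integration generate both the oscillating $(-1)^n$ term and the parity-dependent shape of $r_n$. After assembling $[z^n]\widetilde C(z)$, collecting terms, and adding the result to the formula of Theorem~\ref{thm:clairvoyant:cost}, the claim reduces to verifying the matching: the coefficient of $H_n$ must decrease by $\tfrac14$, the $\Halt_n$ and $r_n$ terms must change sign, the coefficient of $n\Halt_n$ must flip from $\tfrac15$ to $-\tfrac15$, the constant must become $-\tfrac{83}{800}$, and the oscillation must become $+(-1)^n/10$. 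This is a finite, purely computational check, so once $\widetilde C_n$ is in closed form the theorem follows.
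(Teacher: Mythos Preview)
Your proposal is correct and uses the same generating-function machinery as the paper---Lemma~\ref{le:integration} followed by coefficient extraction via the identities for $\sum H_n z^n$, $\sum \Halt_n z^n$, and $\artanh(z)$. The only organisational difference is that the paper treats ``Count'' independently: it writes down $\Pct(z)$ in full (your $\widetilde P(z)$ plus the terms already present in $\Pcv(z)$), integrates to obtain $\Cct(z)$ explicitly, and extracts coefficients directly, rather than reducing to Theorem~\ref{thm:clairvoyant:cost} via linearity of the recurrence. Your reduction is a legitimate shortcut---$\widetilde P(z)$ is indeed slightly simpler than $\Pct(z)$, so the double integration is a bit lighter---but the substance and the level of computation required are the same.
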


Again, the asymptotic behavior follows from the exact result.

\begin{corollary}\label{cor:count:cost:asy}
  The average number of comparisons in the algorithm ``Count'' is
  \begin{equation*}
    \E{\Cct_n} = 
    \frac95n \log n + A n + B \log n + C
    + \frac{D}{n} + \frac{E}{n^2} + \frac{F\iverson*{$n$ even} + G}{n^3}
    + \Oh[Big]{\frac{1}{n^4}}
  \end{equation*}
  with
  \begin{align*}
    A &= \frac95\gamma
    + \frac{1}{5} \log 2
    - \frac{89}{25}
    = -2.3823823670652\dots, &
    B &= \frac{67}{40}
    = 1.675, \\
    C &= \frac{67}{40}\gamma
    + \frac{3}{40}\log 2
    + \frac{637}{800}
    = 1.81507227725206\dots, &
    D &= \frac{11}{16} = 0.6875, \\
    E &= - \frac{67}{480} = -0.1395833\dots,\qquad
    F = - \frac{1}{8} = -0.125, &
    G &= \frac{31}{400} = 0.0775,
  \end{align*}
  asymptotically as $n$ tends to infinity.
\end{corollary}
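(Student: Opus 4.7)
The plan is to deduce the asymptotic expansion from the exact formula in Theorem~\ref{thm:count:cost} by substituting the expansions of the harmonic-number variants from Lemma~\ref{lem:harmonic-asy} and expanding $r_n$ in powers of $1/n$. Since this is completely analogous to the proof of Corollary~\ref{cor:clairvoyant:cost:asy}, the same mechanism (plugging into the closed form, collecting terms by order) works here; only the coefficients differ due to the sign change of $\Halt_n$ and the modified constants in Theorem~\ref{thm:count:cost}.

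First I would expand the two leading building blocks. From \eqref{eq:asymptotic-harmonic-number},
\begin{equation*}
nH_n = n\log n + \gamma n + \tfrac{1}{2} - \tfrac{1}{12n} + \Oh{1/n^3},
\end{equation*}
so that $\frac{9}{5}nH_n$ contributes the announced $\frac{9}{5}n\log n$, a linear term $\frac{9\gamma}{5}n$, and smaller-order pieces. From \eqref{eq:asymptotic-alternating-harmonic-number},
\begin{equation*}
n\Halt_n = -n\log 2 + \tfrac{(-1)^n}{2} - \tfrac{(-1)^n}{4n} + \Oh{1/n^3},
\end{equation*}
so that $-\frac{1}{5}n\Halt_n$ contributes the crucial $+\frac{1}{5}n\log 2$ to the linear term (which, together with the corresponding $\frac{9\gamma}{5}n$ and $-\frac{89}{25}n$, yields the constant $A$). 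A pleasant bookkeeping check is that the $-\frac{(-1)^n}{10}$ coming out of this expansion cancels exactly the $+\frac{(-1)^n}{10}$ in the exact formula, so no alternating contribution survives in $C$.

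Next I would handle the lower-order terms: substitute the full expansions of $\frac{67}{40}H_n$ and $-\frac{3}{40}\Halt_n$ using Lemma~\ref{lem:harmonic-asy}, giving the logarithmic constant $B=\frac{67}{40}$ together with contributions to $C$, $D$, $E$, and higher. Finally I would expand $r_n$. For even $n$, using $\frac{1}{n-3}=\frac{1}{n}+\frac{3}{n^2}+\frac{9}{n^3}+\Oh{1/n^4}$ and $\frac{3}{n-1}=\frac{3}{n}+\frac{3}{n^2}+\frac{3}{n^3}+\Oh{1/n^4}$ gives
\begin{equation*}
r_n = \tfrac{1}{320}\bigl(\tfrac{4}{n}+\tfrac{6}{n^2}+\tfrac{12}{n^3}\bigr)+\Oh{1/n^4},
\end{equation*}
and similarly for odd $n$ one expands $\frac{3}{n-2}+\frac{1}{n}$. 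Subtracting $r_n$ shifts only the coefficients at $1/n$, $1/n^2$ and the parity-dependent $1/n^3$ terms; in particular it is responsible for the sign change in $F$ relative to Corollary~\ref{cor:clairvoyant:cost:asy}.

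There is no genuine obstacle, only careful bookkeeping: one must track which terms receive contributions from $H_n$, $\Halt_n$ and $r_n$ simultaneously, ensure the $(-1)^n$ terms cancel correctly at the constant level, and verify that the even/odd halves of $-r_n$ combine with the $(-1)^n$-terms from $-\tfrac{1}{5}n\Halt_n$ and $-\tfrac{3}{40}\Halt_n$ to give the single alternating block $(F\iverson*{$n$ even}+G)/n^3$ stated in the corollary. Collecting everything order by order (through $\Oh{1/n^4}$) yields the listed constants $A,\dots,G$ and completes the proof.
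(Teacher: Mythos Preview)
Your proposal is correct and follows exactly the paper's approach: the paper's proof of Corollaries~\ref{cor:clairvoyant:cost:asy} and~\ref{cor:count:cost:asy} consists of the single sentence ``Insert the expansions of Lemma~\ref{lem:harmonic-asy} into the explicit representations of Theorems~\ref{thm:clairvoyant:cost} and~\ref{thm:count:cost}.'' You have simply spelled out the bookkeeping that the paper leaves implicit.
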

The idea of the proofs of Theorems~\ref{thm:clairvoyant:cost} and
\ref{thm:count:cost} is to translate the recurrence~\eqref{eq:recurrence} into
a second order differential equation for the generating function $C(z)$ of $\E{C_n}$
in terms of the generating function $P(z)$ of $\E{P_n}$. Integrating twice
yields $C(z)$. This generating function then allows extraction of the exact
expressions for $\E{C_n}$. The asymptotic results follow. See
Appendix~\ref*{sec:appendix:costs-main-asy} for details.

\def\appendixproofmain{
\begin{proof}[of Theorems~\ref{thm:clairvoyant:cost}
  and~\ref{thm:count:cost}]
  The partitioning cost of strategy
  ``Clairvoyant'' is stated in
  Lemma~\ref{lem:expected:comparisons:clairvoyant:count}.
  The corresponding generating functions can be obtained by using 
  Proposition~\ref{pro:dual-pivot-expect-exact} and \eqref{eq:artanh}:
  \begin{multline*}
    \Pcv(z)=\sum_{n\ge 2} \E{\Pcv_n} z^n 
    = \frac{3}{2(1-z)^2} -\frac{\artanh(z)}{2(1-z)} \\
    -\frac{25z^2}{8(1-z)} +\frac{3+z}{8}\artanh(z) -\frac32 -\frac{23z}{8}.
  \end{multline*}

  We calculate the comparison cost from the partitioning cost by means
  of Lemma~\ref{le:integration} and obtain
  \begin{multline*}
    \Ccv(z) = 
    -2\frac{\log(1-z)}{(1-z)^2}-\frac{2\artanh(z)}{5  (1-z)^2}
    - \frac{44}{25  (1-z)^2} 
    + \frac{\artanh(z)}{4 (1-z)}
    + \frac{279}{160 (1-z)}\\
    -\frac{(1-z)^3}{320}\artanh(z) -\frac{2}{75} z^{3}
    + \frac{123}{1600}  z^{2} - \frac{113}{1600} z + \frac{13}{800}.
  \end{multline*}
  Taking into account that $\artanh(z)=(\log(1+z)-\log(1-z))/2$,
  \begin{align*}
    \sum_{m\ge 1}\Halt_m z^m&=-\frac{\log(1+z)}{(1-z)},\\
    \sum_{m\ge 1} H_m z^m&=-\frac{\log(1-z)}{(1-z)},\\
    \sum_{m\ge 1}m\Halt_m z^{m}&=z \Bigl(-\frac{\log(1+z)}{(1-z)}\Bigr)'
    =- \frac{\log(1 + z)}{{(1-z)}^{2}}+\frac{\log(1 + z)}{1-z} + \frac{1}{2  {(1 + z)}} - \frac{1}{2  {(1-z)}}, \\
    \sum_{m\ge 1}m H_m z^{m}&=z \Bigl(-\frac{\log(1-z)}{(1-z)}\Bigr)'
    = - \frac{\log(1-z)}{{(1-z)}^{2}}+ \frac{1}{{(1-z)}^{2}}+\frac{\log(1-z)}{1-z} - \frac{1}{1-z} ,
  \end{align*}
  as well as \eqref{eq:artanh}, we obtain the result.

  The proof concerning strategy ``Count'' is analogous to the proof of
  Theorem~\ref{thm:clairvoyant:cost} above.
  The corresponding generating functions are
  \begin{multline*}
    \Pct(z)=\sum_{n\ge 2} \E{\Pct_n} z^n
    = \frac{3}{2(1-z)^2} + \frac{\artanh(z)}{2  {(1-z)}} \\
    -\frac{31z^2}{8(1-z)}-\frac{3+z}{8}\artanh(z) - \frac{3}{2}-\frac{25z}{8}
  \end{multline*}
  and
  \begin{multline*}
    \Cct(z) = 
    - \frac{8\log(1-z)}{5(1-z)^2}+\frac{2\artanh(z)}{5 (1-z)^2}
    - \frac{44}{25  (1-z)^2}
    - \frac{\artanh(z)}{4 (1-z)}
    + \frac{281}{160 (1-z)}\\
    + \frac{(1-z)^3}{320}\artanh(z) + \frac{1}{150} z^{3}
    - \frac{27}{1600} z^{2} + \frac{17}{1600} z + \frac{3}{800}.
  \end{multline*}
  in this case.
\end{proof}

\begin{proof}[of Corollaries~\ref{cor:clairvoyant:cost:asy}
  and~\ref{cor:count:cost:asy}]
  Insert the expansions of Lemma~\ref{lem:harmonic-asy} into the explicit
  representations of Theorems~\ref{thm:clairvoyant:cost} and \ref{thm:count:cost}.
\end{proof}
}

\def\appendixexperiments{
\section{Empirical Validation} 
\label{sec:empirical:validation}

We implemented strategies ``Clairvoyant'' and ``Count'' as dual-pivot quicksort algorithms
in a straight-forward way in C\texttt{++}. Pseudocode of the algorithms is presented in Appendix~\ref*{sec:pseudocode}. 

For small input sizes of length $n \in \{2, \ldots, 12\}$ we enumerated all permutations of $\{1, \ldots, n\}$ and verified that the average number of comparisons (computed over all permutations) obtained from the experimental measurements equals the results from Theorem~\ref{thm:clairvoyant:cost} and
Theorem~\ref{thm:count:cost}, respectively.

For larger inputs, we sorted random permutations of $\{1, \ldots, n\}$ for $n = 2^k, k \in \{11, \ldots, 28\},$ and counted the comparisons needed to 
sort the input. For each input size, we sorted $400$ different inputs. Figure~\ref{fig:experiments} shows the measurements we got. From these measurements we conclude that the average comparison counts for sorting over a small number of inputs match the exact average counts from Theorem~\ref{thm:clairvoyant:cost} and Theorem~\ref{thm:count:cost}, resp., very well.

\begin{figure}
    \centering
\begin{tikzpicture}
  \begin{axis}[%
    xlabel={Items (log-scale)},
    ylabel={\# Comparisons / $n \log n$},
    height=7.5cm,
    width=12cm,
    domain = 11:28,
    legend style = { at = {(0.6,0.25)}, anchor=west, draw=none},
    cycle list name = black white,
    legend columns = 1
    ]
    \addplot coordinates { (11.0,1.45124) (12.0,1.48548) (13.0,1.50672) (14.0,1.52512) (15.0,1.54) (16.0,1.55729) (17.0,1.57937) (18.0,1.58742) (19.0,1.59973) (20.0,1.6079) (21.0,1.61674) (22.0,1.62617) (23.0,1.63222) (24.0,1.64094) (25.0,1.64933) (26.0,1.65046) (27.0,1.65689) (28.0,1.66467) };
    \addlegendentry{Clairvoyant};
    \addplot coordinates { (11.0,1.4875) (12.0,1.5179) (13.0,1.5388) (14.0,1.55407) (15.0,1.5662) (16.0,1.58367) (17.0,1.60125) (18.0,1.60875) (19.0,1.61982) (20.0,1.62791) (21.0,1.63594) (22.0,1.64434) (23.0,1.64863) (24.0,1.65718) (25.0,1.66473) (26.0,1.66636) (27.0,1.67144) (28.0,1.67879) };
    \addlegendentry{Count};
    \addplot coordinates { (11.0,1.45225) (12.0,1.48078) (13.0,1.5051) (14.0,1.52606) (15.0,1.54426) (16.0,1.56022) (17.0,1.57431) (18.0,1.58684) (19.0,1.59805) (20.0,1.60815) (21.0,1.61728) (22.0,1.62559) (23.0,1.63317) (24.0,1.64012) (25.0,1.64652) (26.0,1.65242) (27.0,1.65789) (28.0, 1.66296) };
    \addlegendentry{Clairvoyant (exact)}
    \addplot coordinates { (11.0,1.48847) (12.0,1.51404) (13.0,1.53584) (14.0,1.55461) (15.0,1.57092) (16.0,1.58521) (17.0,1.59783) (18.0,1.60906) (19.0,1.61911) (20.0,1.62815) (21.0,1.63633) (22.0,1.64377) (23.0,1.65056) (24.0,1.65679) (25.0,1.66252) (26.0,1.66781) (27.0,1.6727) (28.0,1.67725) };
    \addlegendentry{Count (exact)}
    \end{axis}
\end{tikzpicture}
\caption{Average comparison count (scaled by $n \log n$) needed to sort a random
    input of up to $2^{28}$ integers. We show the measurements we got for
    the comparison-optimal strategy ``Clairvoyant'' and its algorithmic variant
    ``Count'' together with the exact average comparison counts from
    Theorem~\ref{thm:clairvoyant:cost} and Theorem~\ref{thm:count:cost}. All data points
    from experiments are the average over 400 trials.}
\label{fig:experiments}
\end{figure}
}

\appendix
\section*{Appendix}
The appendices can be found at \href{http://arxiv.org/abs/1602.04031v1}{arXiv:1602.04031v1}.

{\footnotesize
\bibliographystyle{alpha}
\bibliography{lit}
}


\end{document}

\clearpage
\appendix


\section{Using the Generating Function Machinery}
\label{sec:gf}


\begin{theorem}\label{thm:paths-gf-zeros}
  For a randomly (as described in Section~\ref{sec:description}) chosen path of
  length~$n$, the expected number of zeros is
  \begin{equation*}
    \E{Z_n} =
    \frac{4}{n+1}
    \sum_{0\leq k < \ell < \ceil{n/2}} \frac{\binom{n}{k}}{\binom{n}{\ell}}
    + \iverson*{$n$ even} \frac{1}{n+1}
    \left(\frac{2^n}{\binom{n}{n/2}} - 1\right) + 1.
  \end{equation*}
\end{theorem}

The remaining part of this section is devoted to the proof of this theorem.
The main technique is to model our lattice paths by means of combinatorial
classes and generating functions. For simplicity of notation,
we denote a combinatorial class by its corresponding generating function.

\begin{figure}
  \centering
    \begin{tikzpicture}[scale=0.25, latticepath/.style={very thick}]

    \draw (-22,-3) -- (-22,14);
    \draw (-24,0) -- (25,0);

    \newcommand{\Cpath}[3][]{
      \draw[latticepath, #1] ($(0,0) + #2$) -- ($(4,0) + #2$) --
      ($(2,2.82842712474619) + #2$) -- cycle;
      \node at ($(2,0) + #2$) [above] {#3};
    }
    \newcommand{\Cpathmirr}[3][]{
      \draw[latticepath, #1] ($(0,0) + #2$) -- ($(4,0) + #2$) --
      ($(2,-2.82842712474619) + #2$) -- cycle;
      \node at ($(2,0) + #2$) [below] {#3};      
    }

    \newcommand{\Cpathdown}[2]{
      \Cpath{#1}{#2}
      \draw[latticepath] ($(4,0) + #1$) -- ($(5,-1) + #1$);
    }

    \node at (-22,10) [left] {$s$};
    \Cpathdown{(-22,10)}{$C$}
    \Cpathdown{(-17,9)}{$C$}

    \draw[dotted] (-11,7) -- (-7,3);
    \draw[latticepath] (-6,2) -- (-5,1);
    \Cpathdown{(-5,1)}{$C$}

    \node[circle,inner sep=1.5pt,fill] at (0,0) {};
    \node[] at (0,0) [below] {$u$};
    \draw[latticepath] (0,0) -- (1,1);
    \Cpath{(1,1)}{$C$}
    \draw[latticepath] (5,1) -- (6,0);
    \draw[latticepath] (0,0) -- (1,-1);    
    \Cpathmirr{(1,-1)}{$C$}
    \draw[latticepath] (5,-1) -- (6,0);

    \node[circle,inner sep=1.5pt,fill] at (6,0) {};
    \node[] at (6,0) [below] {$u$};
    \draw[latticepath] (6,0) -- (7,1);
    \Cpath{(7,1)}{$C$}
    \draw[latticepath] (11,1) -- (12,0);
    \draw[latticepath] (6,0) -- (7,-1);    
    \Cpathmirr{(7,-1)}{$C$}
    \draw[latticepath] (11,-1) -- (12,0);

    \node[circle,inner sep=1.5pt,fill] at (12,0) {};
    \node[] at (12,0) [below] {$u$};
    \draw[dotted] (12,2.414) -- (18,2.414);
    \draw[dotted] (12,-2.414) -- (18,-2.414);

    \node[circle,inner sep=1.5pt,fill] at (18,0) {};
    \node[] at (18,0) [below] {$u$};
    \draw[latticepath] (18,0) -- (19,1);
    \Cpath{(19,1)}{$C$}
    \draw[latticepath] (23,1) -- (24,0);
    \draw[latticepath] (18,0) -- (19,-1);    
    \Cpathmirr{(19,-1)}{$C$}
    \draw[latticepath] (23,-1) -- (24,0);

    \node at (24,0) [below right] {$n$};

  \end{tikzpicture}

  \caption{Decomposition of a lattice path for $s\geq0$ marking zeros.}
  \label{fig:decomp-path-zeros}
\end{figure}

Concerning the generating functions, we mark a step to the right by
the variable~$z$ and a zero (except the last) by~$u$. Note that we do
not mark the zero at $(n,0)$ for technical reasons; we'll take this
into account at the end by adding a $1$ to the final result.
Thus, the coefficient of $z^nu^{r-1}$ of
the function $Q_s(z,u)$ (the generating function of all paths starting in
$(0,s)$ and ending in some $(n,0)$) equals the number of paths of length~$n$
and exactly $r$ zeros.

We also need the following auxiliary function. The generating
function~$\f{C}{z}$ counts all Catalan paths, i.\,e., paths starting and ending
at the same height, but not going below it. This equals
\begin{equation*}
  \f{C}{z} = \frac{1-\sqrt{1-4z^2}}{2z^2}.
\end{equation*}

In Figure~\ref{fig:decomp-path-zeros}, we give a schematic decomposition of a
path from $(0, s)$ to $(n, 0)$ for non-negative $s$. This decomposition
translates to the following parts of the generating function (the path is read
from the left to the right).
\begin{itemize}
\item We start by $s$ consecutive blocks of $\f{C}{z}$, each followed by a single
  descent encoded as $z$. This gives the paths from $(0,s)$ to their first
  zero (i.\,e., where it touches the horizontal axis for the first time).
\item We mark this zero by the symbol~$u$.
\item We either do a single ascent or a single decent (marked by a~$z$), then
  continue with a $\f{C}{z}$-block and do a single decent or ascent
  respectively (marked by a~$z$ as well) again. Thus, we are back at a zero.
\item We repeat such up/down blocks $z\f{C}{z}z$, each one preceded by a
  zero~$u$, a finite number of times.
\end{itemize}
If $s<0$, then the construction is the same, but everything is reflected at the
horizontal axis.

Continuing using the symbolic method---the description above is already part of
it, see, for example, Flajolet and
Sedgewick~\cite{Flajolet-Sedgewick:ta:analy}---the decomposition above
translates to the generating function
\begin{equation}
  \label{eq:path-gf-zeros}
  Q_s(z,u) = \frac{\f{C}{z}^{\abs{s}} z^{\abs{s}}}{1 - 2 u z^2 \f{C}{z}},
\end{equation}
which we will use from now on. Note that the coefficient $2$ reflects the fact
that there are two choices (up and down) for the blocks between zeros.

To obtain a nice explicit form, we perform a change of variables. The result is
stated in the following lemma.

\begin{lemma}\label{lem:transform-gf-zeros}
  With the transformation $z = v / (1+v^2)$ we have
  \begin{equation*}
    Q_s(z,u) = \frac{v^{\abs{s}}(1+v^2)}{1-v^2(2u-1)}.
  \end{equation*}
\end{lemma}

\begin{proof}
  Transforming the counting generating function of Catalan paths yields
  \begin{equation*}
    \f{C}{z} = 1+v^2.
  \end{equation*}
  Thus~\eqref{eq:path-gf-zeros} becomes
  \begin{equation*}
    Q_s(z,u) = 
    (1+v^2)^{\abs{s}}
    \Big(\frac{v}{1+v^2}\Big)^{\abs{s}}
    \frac{1}{1-2u\big(\frac{v}{1+v^2}\big)^2(1+v^2)}
  \end{equation*}
  and can be simplified to the expression stated in the lemma.
\end{proof}

The next step is to extract the coefficients out of the expressions obtained in
the previous lemma. First we rewrite the extraction of the coefficients from
the ``$z$-world'' to the ``$v$-world'', see
Lemma~\ref{lem:extract-coeffs-worlds}. Afterwards, in
Lemma~\ref{lem:coeffs-zeros}, the coefficients can be determined quite easily.

\begin{lemma}\label{lem:extract-coeffs-worlds}
  Let $F(z)$ be an analytic function in a neighborhood of the origin. Then we have
  \begin{equation*}
    [z^n] F(z) = [v^n] (1-v^2) (1+v^2)^{n-1}
      \f{F}{\frac{v}{1+v^2}}.
  \end{equation*}
\end{lemma}

\begin{proof}
  We use Cauchy's formula to extract the coefficients of $F(z)$ as
  \begin{equation*}
    [z^n] F(z) = \frac{1}{2\pi i}\oint_\calD \frac{dz}{z^{n+1}} F(z)
  \end{equation*}
  where $\calD$ is a positively oriented small circle around the origin. Under
  the transformation $z=v/(1+v^2)$, the circle $\calD$ is transformed to a
  contour $\calD'$ which still winds exactly once around the origin. Using
  Cauchy's formula again, we obtain
  \begin{align*}
    [z^n] F(z)
    &= \frac{1}{2\pi i}\oint_{\calD'} \frac{dv(1-v^2)}{(1+v^2)^2}
    \frac{(1+v^2)^{n+1}}{v^{n+1}} \f{F}{\frac{v}{1+v^2}} \\
    &= [v^n] (1-v^2) (1+v^2)^{n-1} \f{F}{\frac{v}{1+v^2}}.
  \end{align*}
\end{proof}

Now we are ready to calculate the desired coefficients.

\begin{lemma}\label{lem:coeffs-zeros}
  Suppose $n\equiv s\pmod 2$. Then we have
  \begin{equation*}
    [z^n] Q_s(z,1)
    = \binom{n}{(n-s)/2}
  \end{equation*}
  and, moreover,
  \begin{equation*}
    [z^n] \left.\frac{\partial}{\partial u} Q_s(z,u)\right\vert_{u=1}
    = 2 \sum_{k=0}^{(n-\abs{s})/2-1}\binom{n}{k}.
  \end{equation*}
\end{lemma}

\begin{proof}
  As $n \equiv s \pmod 2$, the number $n-s$ is even, and so we can
  set $\ell = \frac12(n-s)$. Then $[z^n]Q_s(z, 1)$ is the number of paths from
  $(0, s)$ to $(n, 0)$. These paths have $\ell$ up steps and $n-\ell$ down
  steps; thus there are $\binom{n}{\ell}$ many such paths.

  For the second part of this lemma, we restrict ourselves to $s\geq0$ (otherwise use $-s$ and the symmetry in~$s$
  of the generating function~\eqref{eq:path-gf-zeros} instead). We start with the result of
  Lemma~\ref{lem:transform-gf-zeros}. Taking the first derivative and setting
  $u=1$ yields
  \begin{equation*}
    \left.\frac{\partial}{\partial u} Q_s(z,u)\right\vert_{u=1}
    = \frac{2v^{s+2}(1+v^2)}{(1-v^2)^2}.
  \end{equation*}
  Thus, by using Lemma~\ref{lem:extract-coeffs-worlds}, we get
  \begin{equation*}
    [z^n] \frac{2v^{s+2}(1+v^2)}{(1-v^2)^2}
    = 2\,[v^{n-s-2}]\frac{(1+v^2)^{n}}{1-v^2}.
  \end{equation*}
  We use $\ell$ as above and get
  \begin{equation*}
    [v^{n-s-2}]\frac{(1+v^2)^n}{1-v^2}
    = [v^{2\ell-2}]\frac{(1+v^2)^n}{1-v^2}
    = [v^{\ell-1}] \frac{(1+v)^n}{1-v}
    = \sum_{k=0}^{\ell-1}\binom{n}{k},
  \end{equation*}
  which was claimed to hold.
\end{proof}

We are now ready to prove the main theorem (Theorem~\ref{thm:paths-gf-zeros})
of this section, which provides an expression for the expected number of
zeros. This exact expression is written as a double sum.

\begin{proof}[of Theorem~\ref{thm:paths-gf-zeros}]
  By Lemma~\ref{lem:coeffs-zeros}, the average number of zeros
  (except the zero at the end point) of
  a path of length~$n$ which starts in $(0,s)$ is
  \begin{equation*}
    \mu_{n,s} = \frac{[z^n] \left.\frac{\partial}{\partial u}
        Q_s(z,u)\right\vert_{u=1}}{[z^n] Q_s(z,1)}
    = \frac{2}{\binom{n}{\ell}} \sum_{k=0}^{\ell-1}\binom{n}{k},
  \end{equation*}
  where we have set $\ell = \frac12 (n-\abs{s})$ as in the proof of
  Lemma~\ref{lem:coeffs-zeros}. If $s=0$, this simplifies to
  \begin{equation}\label{eq:mu_n_0}
    \mu_{n,0} = \frac{2}{\binom{n}{n/2}} \sum_{k=0}^{n/2-1}\binom{n}{k}
    = \frac{2^n}{\binom{n}{n/2}} - 1.
  \end{equation}
  If $n\not\equiv s\pmod 2$, then we set $\mu_{n,s} = 0$.

  Summing up yields
  \begin{align*}
    \sum_{s=-n}^n \mu_{n,s}
    &= 2 \sum_{s=1}^n \mu_{n,s} + \mu_{n,0}
    = 4 \sum_{\ell=0}^{\ceil{n/2}-1}
    \frac{1}{\binom{n}{\ell}} \sum_{k=0}^{\ell-1}\binom{n}{k}
    + \mu_{n,0} \\
    &= 4 \sum_{0\leq k < \ell < \ceil{n/2}} \frac{\binom{n}{k}}{\binom{n}{\ell}}
    + \iverson*{$n$ even} 
    \left(\frac{2^n}{\binom{n}{n/2}} - 1\right).
  \end{align*}
  Dividing by the number $n+1$ of possible starting points and adding
  $1$ for the zero at $(n,0)$ completes the proof of
  Theorem~\ref{thm:paths-gf-zeros}.
\end{proof}


\section{Appendix to Section~\ref{sec:prob}: A Probabilistic Approach}
\label{sec:appendix:prob}

The following remark explains the equivalence between the urn model
(Section~\ref{sec:prob}) and the lattice path model
(Section~\ref{sec:description}).

\begin{remark}\label{rem:equivalence-urn-paths}
\appendixequrnpaths
\end{remark}

\appendixproofprobuniform


\appendixidentity


\appendixasy


\appendixdistribution


\section{Appendix to Section~\ref{sec:more-zeros}: Going to Zero and Coming From Zero}
\label{sec:appendix:more-zeros}

\appendixtofromzerodef
\appendixtofromzero


\section{Appendix to Section~\ref{sec:lattice-paths-N}: Lattice Paths of Variable Length}
\label{sec:appendix:lattice-paths-N}

\appendixvarlen


\section{Appendix to Section~\ref{sec:solve-recurrence}}
\label{sec:appendix:quicksort:recurrence}

\appendixqsrecurrence


\section{Appendix to Section~\ref{sec:solve-recurrence}: Solving the Dual-Pivot Quicksort Recurrence}
\label{sec:appendix:solve-recurrence}

\appendixrecint


\section{Appendix to Section~\ref{sec:part-costs}: Partitioning Algorithms and Their Cost}
\label{sec:appendix:part-costs}

\appendixpartcost
\appendixpcost


\section{Appendix to Section~\ref{sec:costs-main-asy}: Main Results and their Asymptotic Aspects}
\label{sec:appendix:costs-main-asy}

\appendixproofmain


\appendixexperiments

\algrenewcommand{\algorithmiccomment}[1]{\hskip3em // #1}

\section{Pseudocode of Dual-Pivot Quicksort Algorithms}
\label{sec:pseudocode}
In this supplementary section, we give the full pseudocode for the 
strategies ``Clairvoyant'' (Algorithm~\ref{algo:clairvoyant}) and ``Count'' (Algorithm~\ref{algo:count}) turned into dual-pivot quicksort algorithms.
\renewcommand{\alglinenumber}[1]{\footnotesize{#1} }

\begin{algorithm}
    \caption{Dual-Pivot Quicksort Algorithm ``Clairvoyant''}\samepage\label{algo:clairvoyant}
    \textbf{procedure} \textit{Clairvoyant}($\textit{A}$, $\textit{left}$, $\textit{right}$)
    \medskip
    \begin{algorithmic}[1]
        \If{$\textit{right} \leq \textit{left}$}
            \State \Return
        \EndIf
        \If{$A[\textit{right}] < A[\textit{left}]$}
        \State swap \textit{A}[\textit{left}] and \textit{A}[\textit{right}]
        \EndIf
        \State $\texttt{p} \gets A[\textit{left}]$
        \State $\texttt{q} \gets A[\textit{right}]$
        \State $\texttt{i} \gets \textit{left} + 1$;
               $\texttt{k} \gets \textit{right} - 1$;
               $\texttt{j} \gets \texttt{i}$
        \State $\texttt{d} \gets \#(\text{small elements}) - \#(\text{large elements})$
        \Comment{$\texttt{d}$ is given by an oracle.}
        \While{$\texttt{j} \leq \texttt{k}$}
            \If{$\texttt{d} \geq 0$}
                \If{$\textit{A}[\texttt{j}] < \textit{p}$}
                    \State swap $\textit{A}[\texttt{i}]$ and $\textit{A}[\texttt{j}]$
                    \State $\texttt{i} \gets \texttt{i} + 1$;
                           $\texttt{j} \gets \texttt{j} + 1$;
                           $\texttt{d} \gets \texttt{d} - 1$
                \Else
                    \If{$\textit{A}[\texttt{j}] < \textit{q}$}
                        \State $\texttt{j} \gets \texttt{j} + 1$
                    \Else
                        \State swap $\textit{A}[\texttt{j}]$ and $\textit{A}[\texttt{k}]$
                        \State $\texttt{k} \gets \texttt{k} - 1$;
                               $\texttt{d} \gets \texttt{d} + 1$
                    \EndIf
                \EndIf
            \Else
                \If{$\textit{A}[\texttt{k}] > \textit{q}$}
                    \State $\texttt{k} \gets \texttt{k} - 1$;
                           $\texttt{d} \gets \texttt{d} + 1$
                \Else
                    \If{$\textit{A}[\texttt{k}] < \textit{p}$}\\
                                            \hskip3em\Comment{Perform a cyclic rotation to the left, i.\,e.,}\\
                        \hskip3em\Comment{$\texttt{tmp} \gets \textit{A}[\texttt{k}]$;
                                          $\textit{A}[\texttt{k}] \gets \textit{A}[\texttt{j}]$;
                                          $\textit{A}[\texttt{j}] \gets \textit{A}[\texttt{i}]$;
                                          $\textit{A}[\texttt{i}] \gets \texttt{tmp}$}
                        \State \textit{rotate3}($\textit{A}[\texttt{k}], \textit{A}[\texttt{j}],
                                        \textit{A}[\texttt{i}]$)
                        \State $\texttt{i} \gets \texttt{i} + 1$;
                               $\texttt{d} \gets \texttt{d} - 1$
                    \Else
                        \State swap $\textit{A}[\texttt{j}]$ and $\textit{A}[\texttt{k}]$
                    \EndIf
                    \State $\texttt{j} \gets \texttt{j} + 1$
                \EndIf
            \EndIf
        \EndWhile
        \State swap $\textit{A}[\textit{left}]$ and $\textit{A}[\texttt{i}-1]$
        \State swap $\textit{A}[\textit{right}]$ and $\textit{A}[\texttt{k}+1]$
        \State \textit{Clairvoyant}(\textit{A}, \textit{left}, $\texttt{i} - 2$)
        \State \textit{Clairvoyant}(\textit{A}, $\texttt{i}$, $\texttt{k}$)
        \State \textit{Clairvoyant}(\textit{A}, $\texttt{k}$ + 2, \textit{right})
    \end{algorithmic}
\end{algorithm}

\begin{algorithm}
    \caption{Dual-Pivot Quicksort Algorithm ``Count''}\samepage\label{algo:count}
    \textbf{procedure} \textit{Count}($\textit{A}$, $\textit{left}$, $\textit{right}$)
    \medskip
    \begin{algorithmic}[1]
        \If{$\textit{right} \leq \textit{left}$}
            \State \Return
        \EndIf
        \If{$A[\textit{right}] < A[\textit{left}]$}
        \State swap \textit{A}[\textit{left}] and \textit{A}[\textit{right}]
        \EndIf
        \State $\texttt{p} \gets A[\textit{left}]$
        \State $\texttt{q} \gets A[\textit{right}]$
        \State $\texttt{i} \gets \textit{left} + 1$;
               $\texttt{k} \gets \textit{right} - 1$;
               $\texttt{j} \gets \texttt{i}$
        \State $\texttt{d} \gets 0$
        \Comment{$\texttt{d}$ holds the difference of the number of small and large elements.}
        \While{$\texttt{j} \leq \texttt{k}$}
            \If{$\texttt{d} \geq 0$}
                \If{$\textit{A}[\texttt{j}] < \textit{p}$}
                    \State swap $\textit{A}[\texttt{i}]$ and $\textit{A}[\texttt{j}]$
                    \State $\texttt{i} \gets \texttt{i} + 1$;
                           $\texttt{j} \gets \texttt{j} + 1$;
                           $\texttt{d} \gets \texttt{d} + 1$
                \Else
                    \If{$\textit{A}[\texttt{j}] < \textit{q}$}
                        \State $\texttt{j} \gets \texttt{j} + 1$
                    \Else
                        \State swap $\textit{A}[\texttt{j}]$ and $\textit{A}[\texttt{k}]$
                        \State $\texttt{k} \gets \texttt{k} - 1$;
                               $\texttt{d} \gets \texttt{d} - 1$
                    \EndIf
                \EndIf
            \Else
                \If{$\textit{A}[\texttt{k}] > \textit{q}$}
                    \State $\texttt{k} \gets \texttt{k} - 1$;
                           $\texttt{d} \gets \texttt{d} - 1$
                \Else
                    \If{$\textit{A}[\texttt{k}] < \textit{p}$}
                        \State \textit{rotate3}($\textit{A}[\texttt{k}], \textit{A}[\texttt{j}],
                                        \textit{A}[\texttt{i}]$)
                        \State $\texttt{i} \gets \texttt{i} + 1$;
                               $\texttt{d} \gets \texttt{d} + 1$
                    \Else
                        \State swap $\textit{A}[\texttt{j}]$ and $\textit{A}[\texttt{k}]$
                    \EndIf
                    \State $\texttt{j} \gets \texttt{j} + 1$
                \EndIf
            \EndIf
        \EndWhile
        \State swap $\textit{A}[\textit{left}]$ and $\textit{A}[\texttt{i}-1]$
        \State swap $\textit{A}[\textit{right}]$ and $\textit{A}[\texttt{k}+1]$
        \State \textit{Count}(\textit{A}, \textit{left}, $\texttt{i} - 2$)
        \State \textit{Count}(\textit{A}, $\texttt{i}$, $\texttt{k}$)
        \State \textit{Count}(\textit{A}, $\texttt{k}$ + 2, \textit{right})
    \end{algorithmic}
\end{algorithm}
 

\clearpage

\end{document}